\numberwithin{equation}{section}
\newtheorem{theorem}{Theorem}[section]
\newtheorem{lemma}[theorem]{Lemma}
\newtheorem{proposition}[theorem]{Proposition}
\newtheorem{corollary}[theorem]{Corollary}
\theoremstyle{definition}
\newtheorem{definition}[theorem]{Definition}
\theoremstyle{remark}
\newtheorem{remark}[theorem]{Remark}
\def\kint_#1{\mathchoice%
         {\mathop{\kern 0.2em\vrule width 0.6em height 0.69678ex depth -0.58065ex
                 \kern -0.8em \intop}\nolimits_{\kern -0.4em#1}}%
         {\mathop{\kern 0.1em\vrule width 0.5em height 0.69678ex depth -0.60387ex
                 \kern -0.6em \intop}\nolimits_{#1}}%
         {\mathop{\kern 0.1em\vrule width 0.5em height 0.69678ex depth -0.60387ex
                 \kern -0.6em \intop}\nolimits_{#1}}%
         {\mathop{\kern 0.1em\vrule width 0.5em height 0.69678ex depth -0.60387ex
                 \kern -0.6em \intop}\nolimits_{#1}}}
\def\vintslides_#1{\mathchoice%
         {\mathop{\kern 0.1em\vrule width 0.5em height 0.697ex depth -0.581ex
                 \kern -0.6em \intop}\nolimits_{\kern -0.4em#1}}%
         {\mathop{\kern 0.1em\vrule width 0.3em height 0.697ex depth -0.604ex
                 \kern -0.4em \intop}\nolimits_{#1}}%
         {\mathop{\kern 0.1em\vrule width 0.3em height 0.697ex depth -0.604ex
                 \kern -0.4em \intop}\nolimits_{#1}}%
         {\mathop{\kern 0.1em\vrule width 0.3em height 0.697ex depth -0.604ex
                 \kern -0.4em \intop}\nolimits_{#1}}}
\newcommand{\vp}{\varphi}
\newcommand{\eps}{\varepsilon}
\newcommand{\be}{\begin{equation}}
\newcommand{\ee}{\end{equation}}
\newcommand{\bes}{\begin{equation*}}
\newcommand{\ees}{\end{equation*}}
\newcommand{\R}{\mathbb{R}}
\newcommand{\Rn}{\mathbb{R}^d}
\newcommand{\N}{\mathbb{N}}
\newcommand{\norm}[1]{\left| #1 \right|}
\newcommand{\esssup}{\operatornamewithlimits{ess\, sup}}
\newcommand{\essinf}{\operatornamewithlimits{ess\,inf}}
\newcommand{\essosc}{\operatornamewithlimits{ess\, osc}}
\newcommand{\divt}{\operatorname{div}}
\renewcommand{\div}{\nabla \cdot}
\renewcommand{\l}{\left}
\renewcommand{\r}{\right}
\providecommand{\brc}[1]{\left\lbrace#1\right\rbrace}
\def\Xint#1{\mathchoice
{\XXint\displaystyle\textstyle{#1}}%
{\XXint\textstyle\scriptstyle{#1}}%
{\XXint\scriptstyle\scriptscriptstyle{#1}}%
{\XXint\scriptscriptstyle\scriptscriptstyle{#1}}%
\!\int}
\def\XXint#1#2#3{{\setbox0=\hbox{$#1{#2#3}{\int}$}
\vcenter{\hbox{$#2#3$}}\kern-.5\wd0}}
\def\dashint{\Xint-}
\title[H\"older continuity for Trudinger's equation]{H\"older continuity for Trudinger's equation in measure spaces}
\author[Kuusi, Laleoglu, Siljander and Urbano]{Tuomo Kuusi, Rojbin Laleoglu, Juhana Siljander\\ and Jos\'e Miguel Urbano}
\begin{document}

\begin{abstract}

We complete the study of the regularity for Trudinger's equation by proving that weak solutions are H\"older continuous also in the singular case. The setting is that of a measure space with a doubling non-trivial Borel measure supporting a Poincar\'e inequality. The proof uses the Harnack inequality and intrinsic scaling.

\end{abstract}

\date{\today}

\keywords{H\"older continuity, singular PDE, intrinsic scaling, Harnack's inequality}

\subjclass[2000]{Primary 35B65. Secondary 35K67, 35D10}

\thanks{Research of JMU supported by CMUC/FCT and projects UTAustin/MAT/0035/2008 and PTDC/MAT/098060/2008.}

\maketitle

\section{Introduction}

The fine properties of non-negative weak solutions of the singular pde
\begin{equation}\label{equation}
\frac{\partial (u^{p-1})}{\partial t}-\div{(|\nabla u|^{p-2}\nabla u)}=0, \quad 1< p \leq 2,
\end{equation}
often referred to as Trudinger's equation, following his pioneering work in \cite{Trud68}, are the object of the present paper. The setting is that of a measure space with a doubling non-trivial Borel measure supporting a Poincar\'e inequality and comprises one of the main novelties in our approach. We show that weak solutions are locally H\"older continuous, completing the effort initiated in \cite{KinnKuus07} and continued recently in~\cite{KuusSiljUrba10}, with the analysis of the degenerate case $p \ge 2$. Although there are some similarities with that case, the new material is substantial and the techniques employed quite different, which should come as no surprise to the expert in the field. The novice might find it appropriate and enlightening to first understand in~\cite{KuusSiljUrba10} how the proof runs for $p \ge 2$.

We have included the case $p=2$ (that corresponds to the heat equation) in the range of variation of $p$ to stress the fact that our proof is stable as $p \rightarrow 2^-$. This stability is achieved through a careful analysis of the dependence of the different constants that pop up along the proof of the H\"older continuity of solutions. The result, although natural and to be expected, was hitherto absent from the literature and is interesting in its own right; it emphasizes the idea that our self-contained proof is the \emph{right} one. 

The issues touched in here have been considered by Porzio and Vespri, Vespri and Ivanov, see~\cite{PorzVesp93, Vesp92, Ivan94}, for equations that are formally equivalent to~\eqref{equation} although it remains unclear whether they possess the same solutions. The relevance of studying equation \eqref{equation} in a measure space, apart from the purely analytical issues it spikes, is related to Harnack inequalities. Grigor'yan and Saloff-Coste observed that the doubling property and the Poincar\'e inequality are not only sufficient but also necessary conditions for the validity of a scale and location invariant parabolic Harnack principle for the heat equation on Riemannian manifolds, see~\cite{Grig91, SaCo02} and~\cite{SaCo92}. Since \eqref{equation} seems to be the only nonlinear parabolic equation which admits such a Harnack inequality, it is the natural candidate for generalizing these results to nonlinear equations.

The analysis of the H\"older continuity of weak solutions is divided, as in the degenerate setting, into two cases: for large scales, when the infimum is much smaller than the oscillation, we can use the Harnack inequality of \cite{KinnKuus07} and the behaviour of solutions is similar to that of caloric functions. For small scales, the equation becomes essentially the singular $p$-Laplace equation treated in \cite{DiBe93,DiBeUrbaVesp04}. There are, nevertheless, many aspects that are new and far from trivial extensions of the known results, namely because of the general measure setting and the extra nonlinear term in the equation. The most relevant are the introduction of a new intrinsic scaling and the different choice of certain test functions, always a crucial aspect in the analysis of pdes. The paper is organized as follows: section 2 collects the basic results on metric spaces needed for our purposes and contains also the main result in the paper; section 3 includes the construction of the iteration argument and the building blocks of the proof, namely the energy and logarithmic estimates; sections 4 and 5 contain the analysis of the two cases the main proof is divided into.

It is never easy to balance the clarity of exposition with avoiding the repetition of previously published material, in particular when the subject is very technical. We made an effort to render the paper as self-contained as possible, keeping it at the same time mostly original. When a duplication of arguments was considered essential for the exposition, this has been expressly acknowledged.

\section{Basics on metric spaces and main result } \label{section:preliminaries}

Let $\mu$ be a Borel measure and $\Omega$ be an open set in $\Rn$.
The Sobolev space $H^{1,p}(\Omega;\mu)$ is defined to be the completion of $C^\infty(\Omega)$ with respect to the Sobolev norm
\[
\|u\|_{1,p,\Omega}=\left(\int_\Omega(|u|^p + |\nabla u|^p)\, d\mu\right)^{1/p}.
\]
A function $u$ belongs to the local Sobolev space $H_{loc}^{1,p}(\Omega;\mu)$ if it belongs to
$H^{1,p}(\Omega';\mu)$ for every $\Omega' \Subset \Omega$. Moreover, the Sobolev space with zero boundary values $H^{1,p}_0(\Omega;\mu)$ is defined as the completion of $C_0^\infty(\Omega)$ with respect to the Sobolev norm. For more properties of Sobolev spaces, see e.g. \cite{HeinKilpMart93}.

Let $t_1<t_2$.
The parabolic Sobolev space $L^p(t_1,t_2;H^{1,p}(\Omega;\mu))$ is the space of functions $u(x,t)$
such that, for almost every $t$, with $t_1<t<t_2$, the function $u(\cdot, t)$ belongs to $H^{1,p}(\Omega;\mu)$ and
\[
\int_{t_1}^{t_2}\int_\Omega (|u|^p+|\nabla u|^p) \, d\nu < \infty,
\]
where we denote $d\nu=d\mu\,dt$.

The definition of the space $L_{loc}^p(t_1,t_2;H_{loc}^{1,p}(\Omega;\mu))$ is analogous.

\begin{definition}
A function $u \in L_{loc}^p(t_1,t_2;H_{loc}^{1,p}(\Omega;\mu))$ is a
weak solution of equation \eqref{equation} in $\Omega\times(t_1,t_2)$ if it satisfies the integral equality
\begin{equation} \label{weak_solution}
\int_{t_1}^{t_2}\int_{\Omega} \left( |\nabla u|^{p-2}\nabla u\cdot \nabla \eta
-u^{p-1} \frac{\partial\eta}{\partial t} \right)\, d\nu = 0
\end{equation}
for every  $\eta \in C_0^\infty(\Omega \times (t_1,t_2))$.
\end{definition}

Next, we recall a few definitions and results from analysis on metric measure spaces.
The measure $\mu$ is doubling if there is a universal constant $D_0\ge 1$ such that
\[
\mu(B(x,2r))\le D_0 \mu(B(x,r)),
\]
for every $B(x,2r)\subset\Omega$. Here
\[
B(x,r):= \left\{ y\in \mathbb{R}^d \, :\, \norm{y-x}<r\right\}
\]
denotes the standard open ball in $\Rn$ with radius $r$ and center $x$. In fact, the Euclidian metric does not play a crucial role in the arguments
and could very well be replaced with another metric. The important feature is the behavior of the measure with respect to the metric.

The dimension related to the doubling measure is $d_\mu=\log_2 D_0$. Observe that for the Lebesgue measure this is $d_\mathcal{L}=d$.
Let $0<r<R<\infty$.
A simple iteration of the doubling condition implies that
$$
\frac{\mu(B(x,R))}{\mu(B(x,r))}
\le C\left(\frac Rr\right)^{d_\mu}.
$$

We further assume that the measure $\mu$ satisfies the following annular decay property.
There exist constants $0<\alpha<1$ and $c\ge 1$ such that
\begin{equation}\label{annular_decay}
\mu(B(x,r)\setminus B(x,(1-\delta)r))\le c\delta^\alpha\mu(B(x,r)),
\end{equation}
for all $B(x,r)\subset\Omega$ and $0<\delta<1$, see~\cite{Buck99}.

The measure is said to support a weak $(q,p)$-Poincar\'e inequality if there exist constants $P_0>0$ and $\tau\ge 1$ such that
\begin{equation}\label{poincare}
\l(\dashint_{B(x,r)}|u-u_{B(x,r)}|^q \, d\mu  \r)^{1/q}
\le P_0 r\left(\dashint_{B(x,\tau r)} |\nabla u|^p \, d\mu\right)^{1/p},
\end{equation}
for every $u\in H^{1,p}_{loc}(\Omega;\mu)$ and $B(x,\tau r)\subset\Omega$.
Here, we denote
\[
u_{B(x,r)}=\dashint_{B(x,r)} u \, d\mu = \frac{1}{\mu(B(x,r))}\int_{B(x,r)} u\, d\mu.
\]
The word weak refers to the constant $\tau$, that may be strictly greater than one.
In $\Rn$ with a doubling measure, the weak $(q,p)$-Poincar\'e inequality
with some $\tau\ge1$ implies the $(q,p)$-Poincar\'e inequality with $\tau=1$,
see Theorem 3.4 in \cite{HajlKosk00}.
Hence, we may assume that $\tau=1$.

We will assume that the measure supports a weak $(1,p)$-Poincar\'e inequality. On the other hand, the weak $(1,p)$-Poincar\'e inequality and the doubling condition
imply a weak $(\kappa,p)$-Sobolev-Poincar\'e inequality with
\begin{equation} \label{kappa}
\kappa =
\begin{cases}
\dfrac{d_\mu p}{d_\mu -p}, & 1<p< d_\mu, \\
2p, & p \ge d_\mu,
\end{cases}
\end{equation}
where $d_\mu$ is as above. For the proof, we refer to \cite{HajlKosk00}.

For Sobolev functions with zero boundary values,
we have the following version of Sobolev's inequality.
Suppose that $u\in  H_0^{1,p}(B(x,r);\mu)$.
Then
\begin{equation}\label{Sobolevzero}
\left(\dashint_{B(x,r)} |u|^\kappa\, d \mu
\right)^{1/\kappa} \le C r \left(\dashint_{B(x,r)}
|\nabla u|^p\, d \mu \right)^{1/p}.
\end{equation}
For the proof we refer, for example, to \cite{KinnShan01}. Moreover, by a recent result in \cite{KeitZhon08},
the weak $(1,p)$-Poincar\'e inequality and the doubling condition also imply
the $(1,q)$-Poincar\'e inequality for some $q<p$. Consequently, we also have the weak $(\kappa,q)$-Sobolev-Poincar\'e inequality which implies, by H\"older's inequality, the $(q,q)$-Poincar\'e inequality for some $q<p$.

In the sequel, we shall refer to \textit{data} as the set of a priori constants
$p$, $d$, $D_0$, and $P_0$.

We can also include a certain kind of test functions in the Poincar\'e inequality.
We recall the following theorem from~\cite{SaCo92}.

\begin{theorem}
Suppose $u \in H^{1,p}(B(x,r); \mu)$ and let
\begin{equation}\label{poincare_phi}
\eta(y)=\min\l\{\frac{2\cdot d(y,\Omega\setminus B(x,r))}{r}, 1\r\}.
\end{equation}
Then there exists a constant $C=C(p,D_0,P_0)>0$ such that
\[
\l(\dashint_{B(x,r)}|u-u_\eta|^p\eta^p \, d\mu  \r)^{1/p}
\le C r\l(\dashint_{B(x,r)} |\nabla u|^p\eta^p \, d\mu\r)^{1/p},
\]
where
\[
u_\eta=\frac{\int_{B(x,r)}u\eta^p\, d\mu}{\int_{B(x,r)}\eta^p \, d\mu}.
\]
\begin{proof}
See Theorem~5.3.4. in~\cite{SaCo92}.
\end{proof}
\end{theorem}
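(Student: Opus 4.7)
The plan is to reduce the weighted Poincar\'e inequality to the ordinary $(p,p)$-Poincar\'e inequality~\eqref{poincare} via a Whitney-type decomposition of $B(x,r)$ together with a chaining argument, with the annular decay property~\eqref{annular_decay} of $\mu$ used to control the contributions near $\partial B(x,r)$, where $\eta$ degenerates to zero. First I would construct a Whitney-type covering of $B(x,r)$ by balls $B_i = B(x_i,\rho_i)$ with $\rho_i \approx d(x_i,\partial B(x,r))/5$, bounded overlap, and $5B_i\subset B(x,r)$ for every $i$. On each $B_i$ the cutoff is comparable to a constant $\eta_i \approx \min\{1,\rho_i/r\}$, and the ordinary $(p,p)$-Poincar\'e inequality (which follows from~\eqref{poincare} via H\"older) on the dilated balls gives
\[
\int_{B_i}|u-u_{B_i}|^p\,d\mu \le C\rho_i^p\int_{5B_i}|\nabla u|^p\,d\mu.
\]

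Next I would fix a central reference ball $B_0 = B(x,r/4)$, on which $\eta\equiv 1$, and connect every Whitney ball $B_i$ to $B_0$ by a chain of overlapping Whitney balls whose length $N_i$ grows only logarithmically as $B_i$ approaches $\partial B(x,r)$. A telescoping triangle inequality expresses $|u_{B_i} - u_{B_0}|^p$ as a weighted sum of local oscillations along the chain, each controlled by the display above and by the doubling condition relating $\mu(B_i)$ to $\mu(B_j)$ for neighbouring $i,j$.

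The decisive step is the weighted resummation. Multiplying by $\eta_i^p\mu(B_i)$, summing over $i$, and exchanging the order of summation reorganizes the bound so that each Whitney ball $B_j$ is weighted by $\sum_{i\succ j}\eta_i^p\mu(B_i)$, where $i\succ j$ means $B_j$ lies on the chain for $B_i$. Doubling together with~\eqref{annular_decay} bounds this weighted count by $C\eta_j^p\mu(B_j)$, which combined with the local Poincar\'e estimate yields
\[
\int_{B(x,r)}|u-u_{B_0}|^p\eta^p\,d\mu \le Cr^p\int_{B(x,r)}|\nabla u|^p\eta^p\,d\mu,
\]
and replacing $u_{B_0}$ by $u_\eta$ costs only a constant factor, by Jensen and the comparability of $\int\eta^p\,d\mu$ with $\mu(B(x,r))$ (again from~\eqref{annular_decay}). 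The hard part is precisely this weighted chain-counting: one must balance the logarithmic growth of chain-lengths for Whitney balls near $\partial B(x,r)$ against the polynomial decay of the weight $\eta_i^p$, and it is here that annular decay is indispensable -- with only the doubling condition the argument would break down at the boundary.
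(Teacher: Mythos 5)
The paper does not actually supply a proof here; it simply cites Theorem~5.3.4 of Saloff-Coste, whose argument is indeed of Whitney-plus-chaining type, so your overall strategy is the right one and closely parallels the cited proof. The local Poincar\'e step, the chaining to a fixed central ball on which $\eta\equiv 1$, and the final replacement of $u_{B_0}$ by $u_\eta$ are all standard and correctly laid out.

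However, your repeated invocation of the annular decay property~\eqref{annular_decay}, culminating in the claim that it is \emph{indispensable} and that ``with only the doubling condition the argument would break down at the boundary,'' is incorrect, and it contradicts the statement itself: the constant is asserted to be $C=C(p,D_0,P_0)$, with no dependence on the annular decay parameters $\alpha,c$. In the weighted resummation this is visible directly. Every Whitney ball $B_i$ whose chain passes through $B_j$ lies in the ``shadow'' of $B_j$, which is contained in a fixed dilate $CB_j$: the shadow extends from $B_j$ towards $\partial B(x,r)$ a distance comparable to $d(x_j,\partial B(x,r))\sim\rho_j$, and transversally it is no wider than $B_j$. Hence, by bounded overlap and doubling alone, $\sum_{i\succ j}\mu(B_i)\lesssim\mu(CB_j)\lesssim\mu(B_j)$; and since $\eta_i\lesssim\eta_j$ for every such $i$ (those balls are closer to the boundary), one gets $\sum_{i\succ j}\eta_i^p\mu(B_i)\lesssim\eta_j^p\mu(B_j)$ with no annular decay. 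If one instead organizes the shadow by dyadic scales, the weight supplies a geometric factor $2^{-kp}$ at scale $2^{-k}\rho_j$ while doubling bounds each scale's contribution by $\mu(B_j)$, so the series converges without any decay of annular measures. Likewise the comparability $\int_{B(x,r)}\eta^p\,d\mu\ge\mu(B(x,r/2))\gtrsim\mu(B(x,r))$ is a pure doubling fact, since $\eta\equiv 1$ on $B(x,r/2)$; annular decay is not needed there either. A smaller point: the $(p,p)$-Poincar\'e inequality does not follow from the $(1,p)$ one ``via H\"older'' in that direction; you need the self-improvement to the $(\kappa,p)$-Sobolev--Poincar\'e inequality with $\kappa>p$ recorded in~\eqref{kappa}, and then H\"older. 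Both of these points are worth correcting, because as written your proof appears to require hypotheses beyond those the theorem actually uses.
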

We use this theorem in the form of the following corollary.

\begin{corollary}\label{poincare_corollary}
Suppose $u \in H^{1,p}(B(x,r); \mu)$ and let $\eta$ be as in~\eqref{poincare_phi}. Assume further that
\[
\int_A \eta^p \, d\mu\le \gamma \int_{B(x,r)} \eta^p \, d\mu,
\]
for some $0<\gamma<1$, where $A=\{y\in B(x,r) \ : \ |u(y)|>0\}$. Then there exists a constant $C=C(p, D_0,P_0, \gamma)>0$ such that
\[
\l(\dashint_{B(x,r)} |u|^p\eta^p \, d\mu\r)^{1/p} \le Cr\l(\dashint_{B(x,r)} |\nabla u|^p \eta^p \, d\mu\r)^{1/p}.
\]
\begin{proof}
By the Minkowski inequality and the previous lemma, we get
\begin{equation}\label{Minkowski_estimate}
\begin{split}
&\l(\dashint_{B} |u|^p\eta^p \, d\mu\r)^{1/p} \\
&\qquad \le \l(\dashint_{B} |u-u_\eta|^p\eta^p \, d\mu\r)^{1/p}+\l(\dashint_{B} |u_\eta|^p\eta^p \, d\mu\r)^{1/p} \\
&\qquad\le Cr\l(\dashint_{B} |\nabla u|^p \eta^p \, d\mu\r)^{1/p}+\frac{\int_{B} |u|\eta^p \, d\mu}{\mu(B)^{1/p}(\int_{B}\eta^p \, d\mu)^{1-1/p}}.
\end{split}
\end{equation}
Here we have denoted $B\equiv B(x,r)$.
Using H\"older's inequality, we estimate further
\begin{align*}
\frac{1}{\mu(B)^{1/p}}\int_{B} |u|\eta^p \, d\mu\le \l(\int_{A}\eta^p\, d\mu\r)^{1-1/p}\l(\dashint_{B} |u|^p\eta^p \, d\mu\r)^{1/p}.
\end{align*}
On the other hand, we have
\begin{align*}
\frac{\int_{A}\eta^p\, d\mu}{\int_{B}\eta^p \, d\mu}\le \gamma<1.
\end{align*}
Inserting these estimates in~\eqref{Minkowski_estimate} and absorbing the resulting term to the left hand side finishes the proof.
\end{proof}
\end{corollary}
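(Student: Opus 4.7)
The plan is to reduce the corollary to the weighted Poincar\'e inequality of the preceding theorem by splitting $u = (u - u_\eta) + u_\eta$ and applying Minkowski's inequality in the $\eta^p\, d\mu$-weighted norm. Writing $B = B(x,r)$, I would start from
\[
\l(\dashint_{B} |u|^p\eta^p \, d\mu\r)^{1/p} \le \l(\dashint_{B} |u-u_\eta|^p\eta^p \, d\mu\r)^{1/p}+\l(\dashint_{B} |u_\eta|^p\eta^p \, d\mu\r)^{1/p}.
\]
The first term is immediately controlled by $Cr\l(\dashint_{B}|\nabla u|^p\eta^p\, d\mu\r)^{1/p}$ via the theorem just proved, so the problem reduces to handling the constant $u_\eta$, which carries no direct gradient information and must be estimated using the hypothesis on the support set $A$.

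For the second term I would exploit that $u$ vanishes outside $A$: by the very definition of $u_\eta$ and H\"older's inequality,
\[
|u_\eta|\int_B \eta^p\, d\mu = \l|\int_A u\,\eta^p\, d\mu\r| \le \l(\int_A \eta^p\, d\mu\r)^{1-1/p}\l(\int_B |u|^p\eta^p\, d\mu\r)^{1/p}.
\]
Dividing through by $\mu(B)^{1/p}\l(\int_B \eta^p\, d\mu\r)^{1-1/p}$ and invoking the standing assumption $\int_A \eta^p\, d\mu \le \gamma \int_B \eta^p\, d\mu$ yields
\[
\l(\dashint_{B}|u_\eta|^p\eta^p\, d\mu\r)^{1/p} \le \gamma^{1-1/p}\l(\dashint_{B}|u|^p\eta^p\, d\mu\r)^{1/p}.
\]

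Combining the two estimates produces an inequality of the form $X \le Cr\,Y + \gamma^{1-1/p}X$, where $X$ denotes the desired left-hand side and $Y$ the gradient average. Since $\gamma<1$ implies $\gamma^{1-1/p}<1$, one simply absorbs the last term into the left, obtaining the claim with the enlarged constant $C/(1-\gamma^{1-1/p})$, whence the dependence on $\gamma$. I do not foresee a genuine obstacle; the only delicate point is the careful bookkeeping of the $\mu(B)$ and $\int_B\eta^p\,d\mu$ factors when passing between integrals and weighted averages, and the recognition that Minkowski together with the support hypothesis manufacture precisely the strict contraction factor needed for absorption.
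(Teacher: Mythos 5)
Your proof is correct and follows essentially the same route as the paper: Minkowski's inequality to split off the constant $u_\eta$, the weighted Poincar\'e inequality for the oscillation term, and H\"older's inequality on $A$ combined with the smallness hypothesis to produce the contraction factor $\gamma^{1-1/p}<1$ that allows absorption. The only difference is cosmetic bookkeeping: the paper first rewrites the $u_\eta$ term and then applies H\"older, while you apply H\"older directly to $\bigl|\int_A u\,\eta^p\,d\mu\bigr|$ before dividing, but the two chains of inequalities are the same.
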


We will also need the following general result.
\begin{lemma}\label{weak_parabolic}
Let $v$ be a weak solution of equation
\[
v_t-\divt \mathcal{A}(x,t,v,\nabla v) =0,
\]
where the Carath\'eodory function $\mathcal{A}$ satisfies the structure conditions
\begin{align}
&\mathcal{A}(x,t,v,\eta)\cdot \eta \geq \mathcal{A}_0 \norm{\eta}^p, \label{structure_1}\\
&\mathcal{A}(x,t,v,\eta)\leq \mathcal{A}_1 \norm{\eta}^{p-1}, \label{structure_2}
\end{align}
for almost every $(x,t) \in \R^{n} \times \R$ and every $(v,\eta) \in \R \times \R^n$, for
some constants $\mathcal{A}_0,\mathcal{A}_1>0$. Then, for all $k\in \mathbb{R}$, the truncated functions $(v-k)_-$ are weak subsolutions of the same equation with $\mathcal{A}(x,t,v,\nabla v)$ replaced by $-\mathcal{A}(x,t,k-(v-k)_-,-\nabla (v-k)_-)$.
\begin{proof}
See Lemma 1.1 in~\cite{DiBe93}.
\end{proof}
\end{lemma}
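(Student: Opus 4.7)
The plan is to test the Steklov-averaged weak formulation for $v$ against a non-negative function that smoothly localizes to the sublevel set $\{v<k\}$, and then pass to the limit in the regularization. Denote by $v_h$ the Steklov average of $v$ in time; then, for a.e.\ $t$ and every admissible $\phi$,
\[
\int_\Omega \partial_t v_h \cdot \phi \, d\mu + \int_\Omega \l[\mathcal{A}(\cdot, v, \nabla v)\r]_h \cdot \nabla \phi \, d\mu = 0.
\]
Pick a smooth non-increasing cutoff $\Phi_\eps : \R \to [0,1]$ with $\Phi_\eps \equiv 1$ on $(-\infty,0]$ and $\Phi_\eps \equiv 0$ on $[\eps,\infty)$, and fix $\psi \in C_0^\infty(\Omega \times (t_1, t_2))$ with $\psi \ge 0$. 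I would then use the test function $\phi(x,t) = \psi(x,t) \, \Phi_\eps(v_h(x,t) - k)$, which is non-negative and compactly supported.

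Expanding $\nabla \phi$ separates a principal contribution $\Phi_\eps(v_h - k) \nabla \psi$ from a commutator $\psi \, \Phi_\eps'(v_h - k) \nabla v_h$. Letting $h \to 0$, the latter converges to $\psi \, \Phi_\eps'(v-k) \, \mathcal{A}(x,t,v,\nabla v) \cdot \nabla v$, and since \eqref{structure_1} yields $\mathcal{A} \cdot \nabla v \ge \mathcal{A}_0 |\nabla v|^p \ge 0$ while $\Phi_\eps' \le 0$ and $\psi \ge 0$, this commutator carries a favorable sign and can be dropped to produce an inequality. For the time term, the chain rule identifies $\partial_t v_h \cdot \Phi_\eps(v_h - k) = \partial_t F_\eps(v_h - k)$, where $F_\eps(s) := \int_0^s \Phi_\eps(\sigma) \, d\sigma$, and integration by parts in $t$ transfers the derivative onto $\psi$ with no boundary contribution thanks to the compact time support.

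Passing first $h \to 0$ (using standard $L^{p/(p-1)}$-convergence of the Steklov averages of the flux) and then $\eps \to 0^+$, one has $F_\eps(v-k) \to -(v-k)_-$ and $\Phi_\eps(v-k) \to \chi_{\{v \le k\}}$ pointwise almost everywhere, and the inequality reads
\[
\int_{t_1}^{t_2}\!\int_\Omega (v-k)_- \, \partial_t \psi \, d\nu + \int_{t_1}^{t_2}\!\int_\Omega \chi_{\{v \le k\}} \, \mathcal{A}(x,t,v,\nabla v) \cdot \nabla \psi \, d\nu \ge 0.
\]
On $\{v < k\}$ one has $\mathcal{A}(x,t,v,\nabla v) = \mathcal{A}(x,t, k-(v-k)_-, -\nabla(v-k)_-)$ by construction, while on $\{v \ge k\}$ the upper structure bound \eqref{structure_2} evaluated at $\eta = 0$ forces $\mathcal{A}(x,t,v,0) = 0$, so the characteristic function is absorbed. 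Multiplying through by $-1$ recovers exactly the weak subsolution inequality for $w := (v-k)_-$ with vector field $-\mathcal{A}(x,t, k-w, -\nabla w)$, as claimed. The main subtlety is the joint passage to the limit in $h$ and $\eps$ together with the chain-rule justification; both are taken care of by the standard parabolic truncation machinery in~\cite{DiBe93}.
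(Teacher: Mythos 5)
The paper gives no proof of its own here, merely citing Lemma 1.1 of~\cite{DiBe93}; your argument reproduces the standard truncation argument underlying that citation: test with $\psi\,\Phi_\eps(v_h-k)$, exploit the favorable sign of the commutator $\psi\,\Phi_\eps'(v_h-k)\,\mathcal{A}\cdot\nabla v_h$ coming from \eqref{structure_1}, and pass $h\to 0$, $\eps\to 0^+$. The argument is correct, including the observation that $\mathcal{A}(x,t,v,0)=0$ by \eqref{structure_2} so that $\chi_{\{v\le k\}}\mathcal{A}(x,t,v,\nabla v)=\mathcal{A}(x,t,k-(v-k)_-,-\nabla(v-k)_-)$ a.e., and the resulting inequality is exactly the weak subsolution inequality for $(v-k)_-$.
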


Our main result is the following theorem. It still holds for equations with a more general principal part $ \mathcal{A}$, satisfying the standard structure assumptions \eqref{structure_1} and \eqref{structure_2}.

\begin{theorem}\label{main_theorem}
Let $1< p \leq 2$ and assume that the measure $\mu$ is doubling, supports a weak $(1,p)$-Poincar\'e inequality and satisfies the annular decay property. Then
any non-negative weak solution of equation~\eqref{equation} is locally H\"older continuous.
\end{theorem}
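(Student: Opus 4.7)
Hölder continuity will follow from proving a geometric decay of the essential oscillation along a nested sequence of intrinsically scaled cylinders. For a fixed point $(x_0,t_0) \in \Omega \times (t_1,t_2)$, construct $Q_n = B(x_0,R_n) \times (t_0 - T_n, t_0]$ with $R_n = \sigma^n R_0$ for some $\sigma \in (0,1)$ and $T_n$ chosen according to the intrinsic scaling appropriate to the regime below. Writing $\omega_n := \essosc_{Q_n} u$, the plan is to exhibit $\theta \in (0,1)$ depending only on the data such that
\[
\omega_{n+1} \le \theta\,\omega_n
\]
for every $n$, at which point a standard iteration argument yields a Campanato-type estimate and local Hölder continuity. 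As signalled in the introduction, each induction step splits into two alternatives depending on whether $\essinf_{Q_n} u$ is small or not relative to $\omega_n$, reflecting the two distinct regimes in which equation~\eqref{equation} behaves.

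\textbf{Case I: $\essinf_{Q_n} u \le \omega_n/2^s$.} In this regime $u$ takes values close to zero somewhere in $Q_n$, and the natural tool is the scale- and location-invariant parabolic Harnack inequality of \cite{KinnKuus07}, which operates on Trudinger cylinders with $T_n \simeq R_n^p$. Applied backward from a point where $u$ is near its infimum, it bounds $\esssup u$ on a sub-cylinder $Q_{n+1}$ by a fixed multiple of $\essinf_{Q_n} u \le \omega_n/2^s$; picking the integer $s = s(\text{data})$ sufficiently large then produces $\omega_{n+1} \le \theta\,\omega_n$.

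\textbf{Case II: $\essinf_{Q_n} u > \omega_n/2^s$.} Here $u \ge \omega_n/2^s$ throughout $Q_n$, so the factor $u^{p-2}$ arising from the expanded time derivative $\partial_t u^{p-1} = (p-1)u^{p-2}\partial_t u$ is controlled by $(\omega_n/2^s)^{p-2}$; once this factor is absorbed, equation~\eqref{equation} behaves like the singular $p$-Laplace equation treated in \cite{DiBe93, DiBeUrbaVesp04}. The associated intrinsic time scale, tuned to the Trudinger nonlinearity---this is the \emph{new} intrinsic scaling highlighted in the introduction---replaces $T_n \simeq R_n^p$ by something proportional to $\omega_n^{p-2}\,R_n^p$. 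On such cylinders I would run a De Giorgi iteration in the style of the quoted references: Caccioppoli energy estimates for the truncations $(u - k)_\pm$, reducing to the subsolution case via Lemma~\ref{weak_parabolic} applied to $v = u^{p-1}$ (well-defined since $u$ is bounded away from zero); a logarithmic estimate to propagate pointwise information forward in time; and the Sobolev-Poincaré inequality~\eqref{Sobolevzero}, the cutoff Poincaré estimate of Corollary~\ref{poincare_corollary}, and the annular decay property~\eqref{annular_decay} to push the resulting measure-level control to an $L^\infty$ bound on a smaller cylinder. The outcome is an alternative of the form $u \le \mu_+ - \theta_0\omega_n$ or $u \ge \mu_- + \theta_0\omega_n$ on $Q_{n+1}$, whence $\omega_{n+1} \le (1 - \theta_0)\,\omega_n$.

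\textbf{Main obstacle.} I expect the technical heart of the proof to lie in Case II, specifically in designing the new intrinsic scaling together with matching test functions. Because the time derivative in the weak formulation~\eqref{weak_solution} acts on $u^{p-1}$ rather than on $u$, the Steklov-averaged energy inequality yields integrals of the form $\int (u^{p-1} - k^{p-1})(u - k)_\pm\,\eta^p\, d\nu$; to convert these into the $L^2$-type quantity $\int (u - k)_\pm^2\, d\nu$ on which the De Giorgi scheme runs, one has to exploit the lower bound $u \ge \omega_n/2^s$ of Case II together with a mean-value argument for $s \mapsto s^{p-1}$, and this is exactly where the different choice of test functions advertised in the introduction intervenes. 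A second, parallel source of difficulty is the stability as $p \to 2^-$: all constants must stay uniform, since in that limit the factor $u^{p-2}$ degenerates continuously to $1$ and the scheme must reproduce the classical heat-equation Hölder argument; this forces careful tracking of the $p$-dependence in both cases.
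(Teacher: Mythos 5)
Your overall architecture matches the paper closely: the same dichotomy (Case I where the infimum is small relative to the oscillation, handled via the parabolic Harnack inequality of~\cite{KinnKuus07}; Case II where the infimum dominates, handled via the substitution $v=u^{p-1}$ and DiBenedetto-style intrinsic scaling with matching energy and logarithmic estimates), and you correctly anticipate the two-alternative structure within Case II, the use of Lemma~\ref{weak_parabolic}, Corollary~\ref{poincare_corollary}, the annular decay property, and the mean-value argument linking the oscillations of $u$ and $u^{p-1}$ together with the need for $p\uparrow 2$ stability.

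There is, however, a genuine gap in your Case I. You propose to apply the Harnack inequality ``backward from a point where $u$ is near its infimum'' to bound the supremum on $Q_{n+1}$. Parabolic Harnack only propagates information \emph{forward} in time: inequality~\eqref{eq:Harnack} bounds the supremum on an \emph{earlier} time slab by the infimum on a \emph{later} one. Reading it from a point near the infimum therefore controls $u$ at times preceding that point --- not on the sub-cylinder near the vertex $(0,0)$ on which you need to reduce the oscillation; and you have no control on where in $Q_n$ the near-minimum is attained (it could sit at the earliest times, in which case your argument yields nothing inside $Q_n$ at all). The paper resolves Case I with a genuine sub-dichotomy: if the superlevel set $\{u\ge\mu_i^-+\omega_i/2\}$ has positive measure at early times, forward Harnack from such a point \emph{raises the infimum} on a later cylinder near the vertex (Lemma~\ref{forwarding}); if that superlevel set has zero measure, a De Giorgi iteration based on Lemma~\ref{main_lemma}/Corollary~\ref{cor:iteration} \emph{lowers the supremum} near the vertex (Lemma~\ref{lem:Case I lemma 1}). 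Your sketch is missing this second branch and applies the Harnack estimate in the wrong time direction in the first; both are fixable within the toolkit you already invoke, but as written the step would fail.
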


Let $(x_{0},t_{0})$ be a point in the space-time domain. The cylinder of radius $r>0$ and height $s>0$, with vertex at $(x_{0},t_{0})$, is defined as
\begin{align*}
Q_{x_{0},t_{0}}(s,r)&:= B(x_0,r) \times (t_{0}-s,t_{0}). 
\end{align*}
We write $Q\left(s,r\right)$ to denote $Q_{0,0}(s,r)$. 
Moreover, we shall use the notation
$\delta Q_{x_{0},t_{0}}(s,r) = Q_{x_{0},t_{0}}(\delta^p s,\delta r)$ and $\delta B(x_0,r) = B(x_0,\delta r)$.

Recall Harnack's inequality from \cite{KinnKuus07}.

\begin{theorem}\label{Harnack}
Let $1<p<\infty$ and suppose that the measure $\mu$ is doubling and supports a weak $(1,p)$-Poincar\'e inequality.
Moreover, let $u\ge 0$ be a weak solution to ~\eqref{equation} in $Q_{x,t}(2r^p,2r)$.
Then, for any $\sigma\in (0,1)$, there exists a constant $ H = H (p,d, D_0, P_0, \sigma) \ge 2$ such that
\begin{equation}\label{eq:Harnack}
\esssup_{B(x,r)\times(t-r^p,t - \sigma r^p) }{u} \leq
 H  \essinf_{B(x,r)\times \left(t - (\sigma/2) r^p, t \right)}{u}.
\end{equation}
\end{theorem}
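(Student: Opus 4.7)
The plan is to follow Moser's classical iteration scheme, suitably adapted to the nonlinear setting of Trudinger's equation and to the metric measure framework (as in Saloff-Coste's treatment of the heat equation on manifolds). The crucial structural feature that makes a scale- and location-invariant Harnack inequality possible for equation \eqref{equation} is that the equation is homogeneous of degree one in $u$: this is precisely why the cylinders may be taken as $B(x,r) \times (t-r^p,t)$ without any intrinsic rescaling, in contrast to the $p$-Laplacian evolution. First I would establish the building blocks: a Caccioppoli estimate obtained by testing \eqref{weak_solution} (in a Steklov-averaged form, to handle the time derivative rigorously) with $\eta^p u^{\beta}$ for a suitable exponent $\beta$, and a logarithmic estimate obtained by testing with $\eta^p / u^{p-1}$, giving, respectively, control of $|\nabla u|^p$ weighted by powers of $u$, and control of $|\nabla \log u|^p$ on cylinders.

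Then I would run two Moser iterations. Combining the Caccioppoli inequality with the Sobolev--Poincaré inequality \eqref{Sobolevzero} yields a reverse-H\"older type estimate on nested cylinders; iterating in the standard way gives, for every $q>0$,
\[
 \esssup_{Q'} u \,\le\, C\Bigl(\dashint_Q u^q \,d\nu\Bigr)^{1/q},
\qquad
 \essinf_{Q''} u \,\ge\, C^{-1}\Bigl(\dashint_{Q'''} u^{-q} \,d\nu\Bigr)^{-1/q},
\]
on appropriate sub-cylinders located respectively below and above the vertex $(x,t)$. The logarithmic estimate, combined with the $(q,q)$-Poincar\'e inequality guaranteed by \cite{KeitZhon08}, shows that $\log u$ has bounded mean oscillation on space-time cubes inside $Q_{x,t}(2r^p,2r)$. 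By the John--Nirenberg lemma (or, equivalently, the Bombieri--Giusti crossover argument), this implies the existence of a small exponent $q_0>0$ and a constant $C$ such that
\[
\Bigl(\dashint_{Q^-} u^{q_0} \,d\nu\Bigr)^{1/q_0}
\,\le\, C\Bigl(\dashint_{Q^+} u^{-q_0} \,d\nu\Bigr)^{-1/q_0},
\]
where $Q^-$ and $Q^+$ are suitably placed past and future cylinders. Chaining this with the two $L^{q_0}$--$L^{\infty}$ bounds produces exactly \eqref{eq:Harnack}.

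The main obstacles I would anticipate are two. First, the nonlinear time term $\partial_t(u^{p-1})$ makes the energy and log estimates genuinely nonlinear in $u$, and one must carefully choose $\beta$ in the test function so that the "parabolic boundary term" $\int u^{\beta+p-1}\eta^p\, d\mu$ dominates with the correct sign after integration by parts in time; for the singular range $1<p\le 2$ the permissible exponents form a delicate window that must be respected throughout the iteration. Second, the transition between $L^{q_0}$ and $L^{-q_0}$ averages requires tracking how the space-time support of $\eta$ interacts with the measure; here the doubling condition gives the Vitali covering and volume comparisons, while the annular decay property \eqref{annular_decay} is what allows one to absorb the boundary strips produced by the cut-off functions. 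Once these technical ingredients are handled, the dependence of $H$ on $\sigma$ appears naturally through the number of iteration steps needed to reach from the level $\sigma r^p$ to $0$.
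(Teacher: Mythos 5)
The paper does not supply its own argument for this theorem --- it is recalled directly from Kinnunen and Kuusi \cite{KinnKuus07} --- so there is no in-paper proof to compare against. Your sketch is nonetheless the natural strategy and matches the approach of the cited reference: Caccioppoli estimates from testing with $\eta^p u^\beta$, a logarithmic estimate from testing with $\eta^p/u^{p-1}$, Moser iteration via the Sobolev--Poincar\'e inequality to obtain sup and inf bounds in terms of small $L^{q}$ norms on suitably offset cylinders, a parabolic BMO estimate for $\log u$ through the $(q,q)$-Poincar\'e inequality of \cite{KeitZhon08}, and the Bombieri--Giusti crossover to connect the small positive and negative exponents. The observation that the degree-one homogeneity of \eqref{equation} removes the need for intrinsic scaling --- and that this is why Trudinger's equation, alone among doubly nonlinear problems, admits a scale- and location-invariant Harnack principle --- is the key structural insight, and you identify it correctly.

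One small caveat: you appeal to the annular decay property \eqref{annular_decay} to absorb the boundary strips created by the cut-off functions. That hypothesis is not among the assumptions of Theorem~\ref{Harnack} (only doubling and a weak $(1,p)$-Poincar\'e inequality are), and the Moser/crossover machinery does not need it; the cut-off terms are absorbed in the usual way using doubling and a Vitali covering. Annular decay enters this paper only later, in the H\"older continuity argument (for instance in the time-slice estimate of Lemma~\ref{logarithmic_bound}). Beyond this, your sketch correctly captures both the architecture of the proof and the delicate exponent book-keeping forced by the nonlinear parabolic term $\partial_t(u^{p-1})$.
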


In particular, the result shows that the weak solutions of equation~\eqref{equation} are locally bounded. In the sequel, we will assume this without further comments.

\section{Iteration argument and fundamental estimates}

Let $u$ be a non-negative weak solution of equation \eqref{equation} in $\Omega\times(t_1,t_2)$ and let $K \subset \Omega\times(t_1,t_2)$. Our aim is to show that the oscillation of $u$ around any point in $K$ reduces in a quantitative way as we suitably decrease the size of the set where the oscillation is measured. This reduction process has to be iterated producing a sequence of cylinders. 

For all purposes in the sequel, it is enough to study the cylinder 
$$Q^0:=Q(R^p,R), \quad R>0,$$ 
instead of the set $K$. Indeed, we could build the sequence of suitable cylinders for any point in $K$ but, since we can always translate the equation, we can, without loss of generality, restrict the study to the origin.

Let
\[
\mu_0^-\le\essinf_{Q^0} u \qquad \text{and} \qquad \mu_0^+\ge\esssup_{Q^0} u,
\]
and define
\[
\omega_0:=\mu_0^+-\mu_0^-.
\]
We may assume that $\omega_0>0$, because otherwise there is nothing to prove.
Furthermore, we choose $\mu_0^-$ small enough so that
\begin{equation}\label{bounds_inf}
(2 H +1)\mu_0^-\le \omega_0
\end{equation}
holds, where $H$ is the constant from Theorem \ref{Harnack}. We will construct a nondecreasing sequence $\{\mu_i^-\}$ and a nonincreasing sequence $\{\mu_i^+\}$ such that
\[
\omega_i :=\mu_i^+-\mu_i^-=\sigma^i\omega_0, \quad i=0,1,\ldots
\]
for some $3/4 < \sigma < 1$. Moreover, these sequences can be chosen so that
\begin{equation}\label{eq:u between mu(i)pm}
\mu_i^-\le\essinf_{Q^i} u \qquad \text{and} \qquad \mu_i^+\geq \esssup_{Q^i}u,
\end{equation}
for some suitable sequence $\{Q^i\}$ of cylinders. Consequently,
\[
\essosc_{Q^i}u\le \omega_i.
\]
The cylinders here will be chosen so that their size decreases in a controllable way from which we will deduce the H\"older continuity. The actual proof will proceed inductively. We will assume that in $Q^i$ the two-sided bound~\eqref{eq:u between mu(i)pm} holds. Then we shall verify an alternative (Cases I and II below), and construct the cylinder $Q^{i+1}$ accordingly, in such a way that~\eqref{eq:u between mu(i)pm} holds with $i$ replaced with $i+1$.

For $\delta >0$, sufficiently small, let 
\begin{equation}\label{sol}
R_i = \delta^i R, \quad i=0,1,\ldots .
\end{equation}
The construction goes as follows: 

\begin{enumerate}

\item for each $Q^i$, choose the numbers $\mu_i^\pm$ so that \eqref{eq:u between mu(i)pm} holds;

\item if $\mu_i^\pm$ satisfy
\begin{equation}\label{eq:Case I test}
\textrm{Case I:} \qquad (2 H +1)\mu_{i}^- \leq \omega_{i} = \mu_i^+ - \mu_i^-,
\end{equation}
then set inductively
\[
Q^{i+1} = Q(R_{i+1}^p,R_{i+1}) =  Q((\delta R_{i})^p,\delta R_{i}) =: \delta Q^i ;
\]

\item if, on the other hand,~\eqref{eq:Case I test} fails for some $i_0>0$ (note that $i_0$ must be larger than zero by~\eqref{bounds_inf}), {\it i.e.}, if
\begin{equation}\label{cor:Case II test}
\textrm{Case II:} \quad (2 H +1)\mu_{i_0}^- > \omega_{i_0}  \ \Longleftrightarrow \ \mu_{i_0}^+ < (2 H +2)\mu_{i_0}^-,
\end{equation}
then set inductively, for all $i\geq i_0$,
\[
Q^{i+1} = Q(R_{i+1}^p,c_{i}  R_{i+1}), 
\quad c_{i} = const \cdot \left(\frac{\omega_{i}}{\mu_{i}^-} \right)^{(p-2)/p}.
\]
\end{enumerate}

By construction, if $\delta \in (0,1/2)$ is small enough, then $Q^{i+1} \subset Q^i$, a fact we will later prove in detail. Observe also that the above sequences have been chosen so that
$$
\mu_i^+ \leq \mu_{i_0}^+ < (2 H +2)\mu_{i_0}^- \leq (2 H +2)\mu_i^-,
$$
for all $i\ge i_0$. 

As mentioned before, Case I reflects the behavior of the equation when the oscillation is large compared to the infimum of $u$. In some sense, the equation resembles the standard heat equation in this case. Case II, on the other hand, deals with the situation in which the equation behaves like an evolutionary $p$-Laplace equation with coefficients.

\subsection{Energy estimates}
We start the proof of Theorem~\ref{main_theorem} by proving the usual energy estimates in a slightly modified setting,
which overcomes the problem that we cannot add constants to solutions, see \cite{DiBeFrie85a,Ivan94} and \cite{Zhou94}.

We introduce the auxiliary function
\begin{align*}
\mathcal{J}((u{-}k)_\pm)= & \pm \int_{k^{p-1}}^{u^{p-1}} \left(\xi^{1/(p-1)} {-} k \right)_\pm \, d\xi 
\\ = & \pm (p-1)\int_{k}^{u} \left(\xi{-}k \right)_\pm \xi^{p-2} \, d\xi 
\\ = & (p-1)\int_0^{(u{-}k)_\pm}(k\pm \xi)^{p-2}\xi \, d\xi
\end{align*}
for which
\begin{equation}\label{J_derivative}
\frac{\partial }{\partial t}\mathcal{J}((u{-}k)_\pm)
=\pm\frac{\partial (u^{p-1})}{\partial t}(u{-}k)_\pm.
\end{equation}
In the sequel, we will need the following estimates. For $1<p \leq 2$, we have
\begin{equation}\label{plus_upper_estimate}
\begin{split}
\mathcal{J}((u{-}k)_+) &= (p-1)\int_0^{(u{-}k)_+}(k+ \xi)^{p-2}\xi \, d\xi\\
&\le (p-1)k^{p-2}\int_0^{(u-k)_+} \xi d\xi \\
&\le (p-1)k^{p-2}\frac{(u{-}k)_+^2}{2}
\end{split}
\end{equation}
and
\begin{equation}\label{plus_lower_estimate}
\begin{split}
\mathcal{J}((u{-}k)_+)
&\ge (p-1)u^{p-2}\int_0^{(u{-}k)_+}\xi\, d\xi \\
&\ge (p-1)u^{p-2}\frac{(u{-}k)_+^2}{2}.
\end{split}
\end{equation}

On the other hand, we also obtain
\begin{equation}\label{minus_upper_estimate}
\begin{split}
\mathcal{J}((u{-}k)_-) &=(p-1)\int_0^{(u{-}k)_-}(k-\xi)^{p-2}\xi \, d\xi\\
&\le (p-1)u^{p-2}\int_0^{(u{-}k)_-}\xi\, d\xi \\
&\le (p-1)u^{p-2}\frac{(u{-}k)_-^2}{2}
\end{split}
\end{equation}
and
\begin{equation}\label{minus_lower_estimate}
\begin{split}
\mathcal{J}((u{-}k)_-)&\ge (p-1)k^{p-2}\int_0^{(u{-}k)_-}\xi\, d\xi \\
&\ge (p-1)k^{p-2}\frac{(u{-}k)_-^2}{2}.
\end{split}
\end{equation}

We are now ready for the fundamental energy estimate. The details of the proof can be found in~\cite{KuusSiljUrba10}.

\begin{lemma}\label{energy}
Let $u\ge 0$ be a weak solution of \eqref{equation} and let $k\ge 0$.
Then there exists a constant $C=C(p)>0$ such that
\begin{equation*}
\begin{split}
&\esssup_{t_1<t<t_2}\int_{\Omega}\mathcal{J}((u{-}k)_\pm)\varphi^p\, d\mu
+\int_{t_1}^{t_2}\int_{\Omega} |\nabla (u{-}k)_\pm\varphi|^{p}
\, d\nu \\
&\le C\int_{t_1}^{t_2}\int_{\Omega}(u{-}k)_\pm^p|\nabla \varphi|^p\, d\nu
+C\int_{t_1}^{t_2}\int_{\Omega} \mathcal{J}((u{-}k)_\pm)\varphi^{p-1}\left(\frac{\partial \varphi}{\partial t}\right)_+\, d\nu , \\
\end{split}
\end{equation*}
for every nonnegative $\varphi\in C_0^\infty(\Omega\times(t_1,t_2))$.
\end{lemma}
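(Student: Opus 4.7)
The plan is to test the weak formulation \eqref{weak_solution} with the function
\[
\eta = \pm (u-k)_\pm \varphi^p,
\]
which is an admissible test function after a standard Steklov averaging in time. Since \eqref{weak_solution} only controls $u^{p-1}$ through its distributional time derivative, I would first mollify $u$ in time (say by Steklov averages $u_h$), derive the resulting identity for $u_h$, and then pass to the limit $h \to 0$, exploiting \eqref{J_derivative} which converts the $\partial_t(u^{p-1})$-term into an exact derivative of $\mathcal{J}((u-k)_\pm)$. This is the standard device that avoids having to make sense of $\partial_t u$ pointwise and is carried out in detail in \cite{KuusSiljUrba10}.

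For the parabolic term, using \eqref{J_derivative} and integration by parts in $t$ against $\varphi^p$, I obtain, for every $t_1 < t < t_2$,
\[
\int_{\Omega} \mathcal{J}((u-k)_\pm)(\cdot,t)\, \varphi^p\, d\mu + (\text{positive gradient terms}) = p \int_{t_1}^{t} \!\!\int_\Omega \mathcal{J}((u-k)_\pm)\, \varphi^{p-1}\, \partial_t \varphi\, d\nu,
\]
since $\varphi$ has compact support in time. Taking the supremum over $t$ gives the first term on the left-hand side of the claimed inequality; one only keeps $(\partial_t\varphi)_+$ by dropping the negative part after taking absolute values, which is harmless because $\mathcal{J}((u-k)_\pm) \ge 0$.

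For the spatial term, expanding
\[
\nabla \eta = \pm \nabla (u-k)_\pm \, \varphi^p \pm p(u-k)_\pm \varphi^{p-1} \nabla\varphi,
\]
and noting that $|\nabla u|^{p-2} \nabla u \cdot (\pm \nabla(u-k)_\pm) = |\nabla(u-k)_\pm|^p$ on $\{(u-k)_\pm > 0\}$, the leading term yields $\int \int |\nabla(u-k)_\pm|^p \varphi^p\, d\nu$, while the cross term is estimated by Young's inequality
\[
p\, |\nabla u|^{p-2}\nabla u \cdot \nabla\varphi \,(u-k)_\pm \varphi^{p-1} \leq \varepsilon |\nabla(u-k)_\pm|^p \varphi^p + C_\varepsilon (u-k)_\pm^p |\nabla\varphi|^p
\]
with $\varepsilon$ small enough to be reabsorbed on the left. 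Combining and using $|\nabla (u-k)_\pm\varphi|^p \le C(|\nabla(u-k)_\pm|^p \varphi^p + (u-k)_\pm^p |\nabla\varphi|^p)$ yields the stated form.

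The main obstacle I expect is making the time-derivative argument rigorous despite the nonlinear term $u^{p-1}$: one cannot simply test with $(u-k)_\pm \varphi^p$ pointwise in time because $\partial_t(u^{p-1})$ is only a distribution, so the Steklov-average regularization and subsequent passage to the limit (using the continuity of $\mathcal{J}$ and dominated convergence for the gradient term) is the delicate step. The bounds \eqref{plus_upper_estimate}--\eqref{minus_lower_estimate} play no role in the proof itself—they are preparatory estimates to be used \emph{afterwards}, to replace the nonstandard quantity $\mathcal{J}((u-k)_\pm)$ by $(u-k)_\pm^2$ modulated by powers of $u$ or $k$, once the energy estimate is available.
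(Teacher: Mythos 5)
Your proposal is correct and takes essentially the same approach as the paper: test the weak formulation with $\eta=\pm(u-k)_\pm\varphi^p$ (modulo Steklov regularization in time), convert the parabolic term to $\partial_t\mathcal{J}((u-k)_\pm)\varphi^p$ via \eqref{J_derivative}, integrate by parts in time, and handle the spatial cross term with Young's inequality before reabsorption. The only nit is the phrase about ``taking absolute values''---what you actually use is simply $\partial_t\varphi\le(\partial_t\varphi)_+$ together with $\mathcal{J}\ge 0$, which gives the one-sided bound stated in the lemma.
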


\begin{proof}
Choose the test function $\eta=\pm(u{-}k)_{\pm}\varphi^p$ in~\eqref{weak_solution}, modulo suitable regularization in the time direction, and use~\eqref{J_derivative} for the parabolic term. The rest of the proof is standard, see~\cite{KuusSiljUrba10}.
\end{proof}

By the formal substitution $v=u^{p-1}$, equation~\eqref{equation} transforms into
\begin{equation}\label{substituted_equation}
v_t-c\divt(v^{2-p}|\nabla v|^{p-2}\nabla v)=0,
\end{equation}
where $c=1/(p-1)^{p-1}$. In the second part of the paper, where we prove the reduction of the oscillation for Case II, we are going to use this transformed equation instead of the original one.

The weak solutions of \eqref{substituted_equation} are defined in a similar fashion as for equation~\eqref{equation}. However, it is not clear whether the weak solutions of the two equations are the same in general. Nevertheless, under appropriate assumptions, the substitution can be justified; that is the contents of the following lemma.

\begin{lemma}
Let $0<\Lambda^-\le u\le \Lambda^+$ and $v=u^{p-1}, 1<p<\infty$. Then $u$ is a weak solution of equation~\eqref{equation} if, and only if, $v$ is a weak solution of equation~\eqref{substituted_equation}.

\begin{proof}
Since $f(x)=x^{p-1}$, $p>1$, is Lipschitz in $[\Lambda^-,\Lambda^+]$, by the chain rule we have
\begin{equation}\label{gradient_equivalence}
\nabla u=\frac{1}{p-1} v^{(2-p)/(p-1)}\nabla v.
\end{equation}
Thus
\[
|\nabla u|^{p-2}\nabla u = \frac{1}{(p-1)^{p-1}}v^{2-p}|\nabla v|^{p-2}\nabla v.
\]
This implies that formally the equations are equivalent. It remains to show that the function spaces where the solutions are defined are the same. But now, since $v$ is bounded above and below by a positive number, by~\eqref{gradient_equivalence} we have $\nabla u \in L^p$ if and only if $\nabla v \in L^p$. This concludes the proof.
\end{proof}
\end{lemma}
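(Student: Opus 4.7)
The plan is to exploit the fact that, since $0<\Lambda^-\le u\le \Lambda^+$, the map $f(u)=u^{p-1}$ is a bi-Lipschitz diffeomorphism between $[\Lambda^-,\Lambda^+]$ and $[(\Lambda^-)^{p-1},(\Lambda^+)^{p-1}]$, so passing between $u$ and $v$ is just a smooth, invertible, Lipschitz change of variables on the relevant range. The test function class $C_0^\infty(\Omega\times(t_1,t_2))$ is the same for the weak formulation of \eqref{equation} and of \eqref{substituted_equation}, so the whole problem reduces to (i) verifying that the solution spaces coincide and (ii) checking that the spatial and time integrands transform into each other.

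For the function spaces, I would first apply the standard Sobolev chain rule to the Lipschitz composition $v=f(u)$, which is valid in $H^{1,p}_{loc}$ even in the metric-measure setting since $f\in C^1([\Lambda^-,\Lambda^+])$ with bounded derivative. This gives the identity \eqref{gradient_equivalence} and, because $v$ is bounded above and below by positive constants depending only on $\Lambda^\pm$ and $p$, it yields the pointwise bound $|\nabla u| \le c_1|\nabla v| \le c_2|\nabla u|$. Hence $u\in L^p_{loc}(t_1,t_2;H^{1,p}_{loc}(\Omega;\mu))$ if and only if $v$ is, with the inverse direction handled by applying the chain rule to $u = f^{-1}(v) = v^{1/(p-1)}$, which is likewise Lipschitz on the relevant bounded range.

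Next, I would carry out the computation of the spatial flux. Substituting \eqref{gradient_equivalence} into $|\nabla u|^{p-2}\nabla u$ gives
\[
|\nabla u|^{p-2}\nabla u = \frac{1}{(p-1)^{p-2}}\,v^{(2-p)(p-2)/(p-1)}\cdot\frac{1}{p-1}\,v^{(2-p)/(p-1)}|\nabla v|^{p-2}\nabla v = \frac{1}{(p-1)^{p-1}}\,v^{2-p}|\nabla v|^{p-2}\nabla v,
\]
matching exactly the spatial flux appearing in \eqref{substituted_equation}, with the constant $c=1/(p-1)^{p-1}$. The time term is even simpler: $u^{p-1}=v$, so $u^{p-1}\,\partial_t\eta = v\,\partial_t\eta$. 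Inserting these two substitutions into \eqref{weak_solution} yields exactly the weak formulation of \eqref{substituted_equation} tested against the same $\eta$, so \eqref{weak_solution} holds for every admissible $\eta$ for $u$ if and only if the analogous identity holds for $v$.

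I do not foresee a serious obstacle here, but the most delicate point is invoking the chain rule rigorously: in the metric-measure Sobolev space $H^{1,p}(\Omega;\mu)$ one should make sure that composition with the $C^1$ function $f$ (respectively $f^{-1}$), restricted to the bounded interval $[\Lambda^-,\Lambda^+]$ (respectively its image), preserves membership and that the weak gradient transforms via the classical rule. This is a standard fact in the Newtonian/upper-gradient framework once $f$ is Lipschitz on the essential range of $u$, and the uniform bounds from $\Lambda^\pm$ are precisely what makes this applicable. After that, the equivalence of the weak formulations is just the pointwise identity above combined with the shared test function class.
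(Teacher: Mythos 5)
Your proposal is correct and follows essentially the same route as the paper's own proof: apply the chain rule for the bi-Lipschitz change of variable $v=u^{p-1}$ on the bounded interval $[\Lambda^-,\Lambda^+]$ to obtain \eqref{gradient_equivalence}, deduce from the two-sided positive bounds on $v$ that $\nabla u\in L^p$ iff $\nabla v\in L^p$, and observe that the resulting pointwise identity for the fluxes (together with $u^{p-1}=v$ in the time term) turns one weak formulation into the other for the shared class of test functions. Your version is somewhat more explicit—spelling out the flux computation, the time term, and the chain rule justification in the metric-measure setting—but it adds no new idea beyond what the paper already records.
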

We have the following energy estimate for equation~\eqref{substituted_equation}.

\begin{lemma}\label{substituted_energy}
Let $0<\Upsilon^-\le v\le \Upsilon ^+$ be a weak solution of \eqref{substituted_equation} and let $k\ge 0$.
Then there exists a constant $C=C(p)>0$ such that
\begin{equation*}
\begin{split}
&(\Upsilon ^+)^{p-2}\esssup_{t_1<t<t_2}\int_{\Omega}(v-k)_\pm^2\varphi^p\, d\mu
+\int_{t_1}^{t_2}\int_{\Omega} |\nabla (v-k)_\pm\varphi|^{p}
\, d\nu \\
&\le C\int_{t_1}^{t_2}\int_{\Omega}(v-k)_\pm^p|\nabla \varphi|^p\, d\nu
+C(\Upsilon ^-)^{p-2}\int_{t_1}^{t_2}\int_{\Omega} (v-k)_\pm^2\varphi^{p-1}\left(\frac{\partial \varphi}{\partial t}\right)_+\, d\nu , \\
\end{split}
\end{equation*}
for every nonnegative $\varphi\in C_0^\infty(\Omega\times(t_1,t_2))$.
\begin{proof}
The claim follows by choosing as test function $\eta=\pm(v-k)_\pm\varphi^p$ in the definition of weak solution and by estimating $v^{2-p}$ by $(\Upsilon ^-)^{2-p}$ and $(\Upsilon ^+)^{2-p}$. Otherwise, the proof is standard and the details are left to the reader.
\end{proof}
\end{lemma}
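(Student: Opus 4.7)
The plan is to follow the proof of Lemma~\ref{energy} with the test function $\eta=\pm(v-k)_\pm\varphi^p$ inserted into the weak formulation of~\eqref{substituted_equation}, the only genuinely new ingredient being the handling of the variable coefficient $v^{2-p}$ in front of the $p$-Laplacian. A pleasant structural simplification over Lemma~\ref{energy} is that no auxiliary function of $\mathcal{J}$ type is needed: the time derivative in~\eqref{substituted_equation} acts on $v$ directly, so the chain rule gives $v_t(v-k)_\pm = \tfrac{1}{2}\partial_t (v-k)_\pm^2$, and integration by parts in $t$ (after a routine Steklov-average regularization as in~\cite{DiBe93}) produces the quadratic energy $\tfrac{1}{2}\int(v-k)_\pm^2\varphi^p\,d\mu$ evaluated at a generic time $t^*$, together with a right-hand-side contribution $\tfrac{p}{2}\iint(v-k)_\pm^2\varphi^{p-1}\varphi_t\,d\nu$ controlled by $(\varphi_t)_+$; the $\esssup$ over $t^*$ is taken at the end.

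For the spatial part, since $\nabla v=\pm\nabla(v-k)_\pm$ on the support of $(v-k)_\pm$, the elliptic term decomposes as
\[
c\iint v^{2-p}|\nabla(v-k)_\pm|^p\varphi^p\,d\nu \;\pm\; pc\iint v^{2-p}|\nabla(v-k)_\pm|^{p-2}\nabla(v-k)_\pm\cdot\nabla\varphi\,(v-k)_\pm\varphi^{p-1}\,d\nu.
\]
I would estimate the cross term by Young's inequality with exponents $p/(p-1)$ and $p$, splitting the weight as $v^{2-p}=v^{(2-p)(p-1)/p}\cdot v^{(2-p)/p}$ so that one factor reconstructs $v^{2-p}|\nabla(v-k)_\pm|^p\varphi^p$ (absorbed into the main diffusion term for $\varepsilon$ sufficiently small) and the other yields $v^{2-p}(v-k)_\pm^p|\nabla\varphi|^p$.

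Third, since $1<p\le 2$ makes $s\mapsto s^{2-p}$ nondecreasing, the bilateral bound $(\Upsilon^-)^{2-p}\le v^{2-p}\le(\Upsilon^+)^{2-p}$ is available: the lower bound, applied to the main diffusion term on the left, strips the weight from $\iint|\nabla(v-k)_\pm|^p\varphi^p$, while the upper bound, applied to the residual cross term on the right, removes it from the $|\nabla\varphi|^p$ term. Dividing the resulting inequality by $(\Upsilon^-)^{2-p}$ then places the factor $(\Upsilon^-)^{p-2}$ in front of the $\esssup$ term and of the $(\varphi_t)_+$ term, exactly as in the statement; since $(\Upsilon^+)^{p-2}\le(\Upsilon^-)^{p-2}$ for $p\le 2$, the stated $(\Upsilon^+)^{p-2}$ on the LHS follows \emph{a fortiori}. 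Finally, the left-hand spatial integral is converted to the stated $\iint|\nabla((v-k)_\pm\varphi)|^p\,d\nu$ through the elementary inequality $|\nabla(f\varphi)|^p\le C\bigl(|\nabla f|^p\varphi^p+f^p|\nabla\varphi|^p\bigr)$, the extra $(v-k)_\pm^p|\nabla\varphi|^p$ contribution being reabsorbed into the matching RHS term.

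The only mildly delicate point is the time regularization via Steklov averaging, which is by now completely routine and which the paper itself flags as standard; everything else amounts to careful bookkeeping of the weight $v^{2-p}$ through the two sides of the estimate using the hypothesis $\Upsilon^-\le v\le \Upsilon^+$.
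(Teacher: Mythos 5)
Your proposal follows the same route the paper's terse proof sketches: plug $\eta=\pm(v-k)_\pm\varphi^p$ into the weak formulation of~\eqref{substituted_equation}, exploit the clean chain rule $\pm v_t(v-k)_\pm=\tfrac12\partial_t(v-k)_\pm^2$ in place of the $\mathcal J$-construction needed for~\eqref{equation}, absorb the cross term by Young's inequality with exponents $p/(p-1)$ and $p$, and then use the pointwise bounds $(\Upsilon^-)^{2-p}\le v^{2-p}\le(\Upsilon^+)^{2-p}$. All of that is sound, as is the final replacement of $(\Upsilon^-)^{p-2}$ by the weaker $(\Upsilon^+)^{p-2}$ on the left (valid since $p\le 2$) and the conversion $\iint|\nabla(v-k)_\pm|^p\varphi^p\,d\nu \rightsquigarrow \iint|\nabla((v-k)_\pm\varphi)|^p\,d\nu$ at the cost of an extra $|\nabla\varphi|^p$ term that folds into the right-hand side.

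There is, however, one point where your bookkeeping of the weight $v^{2-p}$ does not close the way you assert. After Young's inequality, the surviving right-hand contribution from the cross term is $C\iint v^{2-p}(v-k)_\pm^p|\nabla\varphi|^p\,d\nu$. You bound this by $(\Upsilon^+)^{2-p}\iint(v-k)_\pm^p|\nabla\varphi|^p\,d\nu$ and then divide the whole inequality by $(\Upsilon^-)^{2-p}$ to normalize the gradient term on the left. But that division leaves the $|\nabla\varphi|^p$ term with the coefficient $C\,(\Upsilon^+/\Upsilon^-)^{2-p}\ge 1$, not $C(p)$: the two-sided bound enters with opposite roles on the two sides and the ratio cannot cancel. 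Your final display, which claims it lands ``exactly as in the statement,'' therefore does not quite follow from your steps; one should carry a factor $(\Upsilon^+/\Upsilon^-)^{2-p}$ (equivalently, state the bound as $C(\Upsilon^+)^{2-p}(\Upsilon^-)^{p-2}$ in front of the $|\nabla\varphi|^p$ integral). This is admittedly a looseness already present in the lemma statement itself, and it is harmless where the lemma is actually invoked---in Lemma~\ref{choosing_lambda} one has $\Upsilon^+\le C_1\Upsilon^-$ by~\eqref{eq:bar est}, so the ratio is absorbed into a data-dependent constant---but since you explicitly flag ``careful bookkeeping of the weight $v^{2-p}$'' as the crux, you should notice and record this extra factor rather than claim a constant depending on $p$ alone.
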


We denote
\[
\psi_\pm(v):=\Psi(H_k^\pm, (v{-}k)_\pm, c)=\l(\ln\l(\frac{H_k^\pm}{c+H_k^\pm-(v{-}k)_\pm}\r)\r)_+.
\]
The following logarithmic lemma will be used to forward information in time. The proof is somehow formal since it involves the time derivative of $v$. A rigorous justification requires the use of Steklov averages in the spirit of ~\cite[p. 101--102]{DiBe93} (see also \cite{KuusSiljUrba10}).

\begin{lemma}\label{Harnack_logarithm}
Let $v\ge 0$ be a weak solution of equation~\eqref{substituted_equation}. Then there exists a constant $C=C(p)>0$ such that, for $1<p \leq 2$, we have
\begin{align*}
\esssup_{t_1<t<t_2}&\int_{\Omega} \psi_+^2(v)(x,t)\varphi^p(x) \, d\mu \\
&\le \int_{\Omega} \psi_\pm^2(v)(x,t_1)\varphi^p(x) \, d\mu \\
&+C \int_{t_1}^{t_2}\int_\Omega v^{2-p}\psi_\pm(v)|(\psi_\pm)^{'}(v)|^{2-p}|\nabla  \varphi|^p \, d\nu.
\end{align*}
Above, $\varphi \in C_0^\infty(\Omega)$ is any nonnegative time-independent test function.

\begin{proof}
Choose
\[
\eta_\pm(v)=\frac{\partial}{\partial v}(\psi_\pm^2(v))\varphi^p
\]
in the definition of weak solution and observe that
\begin{equation}\label{psi''}
(\psi_\pm^2)''=2(1+\psi_\pm)(\psi_\pm')^2.
\end{equation}

The parabolic term will take the form
\begin{align*}
&\int_{t_1}^{t_2}\int_\Omega \frac{\partial v}{\partial t}  \eta_\pm(v)\, d\nu= \int_{t_1}^{t_2}\int_\Omega \frac{\partial v}{\partial t} \frac{\partial}{\partial v}(\psi_\pm^2(v))\varphi^p \, d\nu\\
&=\int_{t_1}^{t_2}\int_\Omega \frac{\partial}{\partial t}(\psi_\pm^2(v))\varphi^p \, d\nu\\
&=\l[ \int_\Omega \psi_\pm^2(v)\varphi^p d\mu\r]_{t_1}^{t_2}.
\end{align*}
Here we used the fact that $\varphi$ is time-independent.

On the other hand, by using~\eqref{psi''} together with Young's inequality, we obtain
\begin{align*}
&  v^{2-p}|\nabla v|^{p-2}\nabla v \cdot \nabla \eta_\pm
 =  v^{2-p}|\nabla v|^{p-2}\nabla v \cdot \nabla((\psi_\pm^2(v))'\varphi^p)
\\ & \qquad \geq v^{2-p}\big(2 |\nabla v|^{p}(1+\psi_\pm)(\psi_\pm')^2\varphi^p
-2p|\nabla v|^{p-1}  \psi_\pm\psi_\pm'\varphi^{p-1}|\nabla\varphi|\big)
\\ & \qquad \geq  v^{2-p}\l( |\nabla v|^{p}(\psi_\pm')^2\varphi^p
- C\psi_\pm(\psi_\pm')^{2-p}|\nabla\varphi|^p\r),
\end{align*}
almost everywhere, from which the claim follows.
\end{proof}
\end{lemma}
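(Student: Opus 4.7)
The plan is to insert the test function
\[
\eta_\pm(v) = \frac{\partial}{\partial v}(\psi_\pm^2(v))\,\varphi^p = 2\psi_\pm(v)\,\psi_\pm'(v)\,\varphi^p
\]
into the weak formulation of~\eqref{substituted_equation} and to exploit the time-independence of $\varphi$ to turn the parabolic contribution into a clean total time derivative. Since $v_t$ need not exist classically, this calculation must be justified via Steklov averaging in the spirit of \cite[p.~101--102]{DiBe93}; I would carry out all manipulations on the regularized level and pass to the limit at the end.

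The algebraic heart of the argument is the identity
\[
(\psi_\pm^2)''(v) = 2\bigl(1+\psi_\pm(v)\bigr)\bigl(\psi_\pm'(v)\bigr)^2,
\]
which follows from a direct computation using $\psi_\pm'' = (\psi_\pm')^2$ on the set $\{\psi_\pm>0\}$. Combined with the chain rule and the hypothesis that $\varphi$ is independent of time, this would give me
\[
\int_{t_1}^{t_2}\int_\Omega \frac{\partial v}{\partial t}\,\eta_\pm \, d\nu = \int_{t_1}^{t_2}\int_\Omega \frac{\partial}{\partial t}\bigl(\psi_\pm^2(v)\bigr)\varphi^p\, d\nu = \left[\int_\Omega \psi_\pm^2(v)\,\varphi^p\, d\mu\right]_{t_1}^{t_2}.
\]
Running the computation over a subinterval $(t_1,\tau)$ with arbitrary $\tau\in(t_1,t_2)$ and then taking the essential supremum over $\tau$ reproduces both the $\esssup$ on the left-hand side of the claim and the initial-time remainder on the right.

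For the spatial term, I would expand
\[
\nabla\eta_\pm = 2(1+\psi_\pm)(\psi_\pm')^2\,\nabla v\,\varphi^p + 2p\,\psi_\pm\,\psi_\pm'\,\varphi^{p-1}\,\nabla\varphi
\]
and pair it with $v^{2-p}|\nabla v|^{p-2}\nabla v$. The first summand produces the nonnegative quantity $2v^{2-p}(1+\psi_\pm)(\psi_\pm')^2|\nabla v|^p\varphi^p$, which dominates $v^{2-p}(\psi_\pm')^2|\nabla v|^p\varphi^p$ and serves as the ``good'' term. The cross term is controlled by Young's inequality with exponents $p/(p-1)$ and $p$, arranged so that the $p/(p-1)$-factor reconstructs the good gradient term with a small coefficient (to be absorbed on the left), while the $p$-factor leaves on the right precisely the $v^{2-p}\psi_\pm(\psi_\pm')^{2-p}|\nabla\varphi|^p$ contribution, up to a constant depending only on $p$.

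The hardest part will be the rigorous Steklov-averaging step for the parabolic term, since $v$ only lies in a parabolic Sobolev class and $\psi_\pm$ is merely Lipschitz across the level set $\{(v-k)_\pm = 0\}$; this is standard but technical. A secondary subtlety is calibrating the Young inequality to produce exactly the exponents $(\psi_\pm')^{2-p}$ and the factor $v^{2-p}$ required on the right, which fixes the particular splitting of the cross term. Combining the parabolic and spatial estimates and rearranging then yields the claim.
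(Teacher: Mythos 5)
Your proposal is correct and follows essentially the same route as the paper: same test function $\eta_\pm(v)=(\psi_\pm^2)'(v)\varphi^p$, same identity $(\psi_\pm^2)''=2(1+\psi_\pm)(\psi_\pm')^2$, same clean total-time-derivative treatment of the parabolic term (with the Steklov-averaging caveat the paper also acknowledges), and the same Young's-inequality splitting of the cross term to match the exponents $(\psi_\pm')^{2-p}$ and $v^{2-p}$. Your remark about running over $(t_1,\tau)$ and taking the essential supremum makes explicit what the paper leaves implicit, but it is not a different argument.
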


We will use the following notation in the next lemma, which is one of the most crucial parts of the argument. Let
\[
r_n=\frac{r}{2}+\frac{r}{2^{n+1}},
\qquad Q_{n}^\pm=B_{n}\times T_{n}^\pm=B(\bar x, \gamma_1^\pm r_n)\times (t^*-\gamma_2^\pm r_n^p,t^*)
\]
for $n=0,1,\dots$, where $\gamma_1^\pm$ and $\gamma_2^\pm$ are constants.

\begin{lemma}\label{main_lemma}
Suppose $u\ge0$ is a weak solution of equation~\eqref{equation} in $Q^i$, and let $\mu_i^\pm$ be positive numbers such that
\[
\mu_i^- \leq \essinf_{Q^i}{u} \leq \esssup_{Q^i}{u} \leq \mu_i^+, \qquad \mu_i^+-\mu_i^-=:\omega_i.
\]
Let
\[
k_n^\pm = \mu_i^\pm \mp \frac{\eps_\pm \omega_i}{4}\left(1 + \frac1{2^{n}}  \right), \qquad n=0,1,\dots,
\]
for $0\leq \eps^\pm \leq 1$. In the minus case, assume further  that
\begin{equation}\label{odd_assumption} 
\eps_- \omega_i \leq C_0 \mu_i^-
\end{equation}
for all $n=0,1,\dots$, for some $C_0>0$. Then there are positive constants $C_-\equiv C_-(D_0,P_0,C_0,p)$ and $C_+ \equiv C_+(D_0,P_0,p)$ such that
if $Q_0^\pm \subset Q^i$, then
$$
\frac{\nu(A_{n+1}^\pm)}{\nu(Q_{n+1}^\pm)}\le C_\pm^{n+1} \Gamma_n^\pm  \left(  \frac{\nu(A_n^\pm)}{\nu(Q_n^\pm)}\right)^{2-p/\kappa},
$$
for every $n=0,1,\dots$. Here 
\[
A_{n}^\pm=\left\{(x,t)\in Q_{n}^\pm:\pm u(x,t)>\pm k_n^\pm\right\},
\]
$\kappa$ is the Sobolev exponent as in \eqref{kappa}, and
\[
\Gamma_n^\pm = \frac{(\gamma_1^\pm)^p}{\gamma_2^\pm}  \left(\frac{k_n^\pm}{\eps_\pm \omega_i}\right)^{p-2}
\left(\frac{\gamma_2^\pm}{(\gamma_1^\pm)^p} \left(\frac{\eps_\pm \omega_i}{k_n^\pm}\right)^{p-2} +  1 \right)^{2-p/\kappa},
\]
for all $n=0,1,\dots$.

\begin{proof}
Choose cutoff functions $\varphi_n^\pm \in C^\infty(Q_n^\pm)$, vanishing on the parabolic boundary of $Q_n^\pm$, and such that $0\le\varphi_n^\pm\le 1$,
$\varphi_n^\pm=1$ in $Q_{n+1}^\pm$,
\begin{align}\label{gradient_estimates}
|\nabla \varphi_n^\pm|\le\frac{C2^{n+1}}{ \gamma_1^\pm r}
\quad\text{and}\quad \left(\frac{\partial \varphi_n^\pm}{\partial t}\right)_+\le \frac{C2^{p(n+1)}}{\gamma_2^\pm r^p}.
\end{align}
Denote in short
\[
\quad u_n = (u{-}k_n)_\pm, \qquad k_n = k_n^\pm, \qquad \eps = \eps_\pm,\qquad \gamma_1=\gamma_1^\pm,\qquad \gamma_2=\gamma_2^\pm
\]
and
\[
Q_n = B_n \times T_n^\pm = Q_{n}^\pm, \qquad A_n=A_n^\pm, \qquad \varphi_n = \varphi_n^\pm.
\]

By H\"older's inequality, together with the Sobolev inequality \eqref{Sobolevzero}, we obtain
\begin{equation}\label{HoldSobo temp}
\begin{split}
\dashint_{Q_{n+1}} & u_n^{2(1-p/\kappa)+ p} \, d\nu
\\ \leq &
\frac{\nu(Q_{n})}{\nu(Q_{n+1})}
\dashint_{Q_{n}}  u_n^{2(1-p/\kappa)+ p} \varphi_n^{p(1-p/\kappa)+ p} \, d\nu \\
\leq & C \dashint_{T_{n}}\left(\dashint_{B_{n}} u_n^2\varphi_n^p\, d\mu\right)^{1-p/\kappa}
\left(\dashint_{B_{n}}(u_n \varphi_n)^{\kappa}\, d\mu\right)^{p/\kappa}\, dt \\
\leq &
C (\gamma_1^\pm)^pr_{n}^p  \left(\esssup_{T_n} \dashint_{B_{n}} u_n^2\varphi_n^p \, d\mu\right)^{1-p/\kappa}
\dashint_{Q_n} |\nabla (u_n\varphi_n)|^p\,d\nu.
\end{split}
\end{equation}
Here, we applied the doubling property of the measure $\nu$, giving
\[
\frac{\nu(Q_{n})}{\nu(Q_{n+1})} \leq C.
\]

We continue by studying the term involving the essential supremum. Since $\eps_+ \leq 1$, we have
\[
\mu_i^+ = k_{n}^+ + \frac{\eps_+ \omega_i}{4}\left(1 + \frac1{2^{n}}  \right) \leq k_n^+ + \frac{\mu_i^+}{2} \quad \Longrightarrow \quad \mu_i^+ \leq 2k_n^+,
\]
and hence we obtain, using \eqref{plus_lower_estimate},
\begin{align*}
(u-k_n^+)_+^{2} \leq  \frac2{p-1} u^{2-p} \mathcal{J}((u-k_n^+)_+) \le C (k_n^+)^{2-p}\mathcal{J}((u-k_n^+)_+).
\end{align*}
On the other hand,
the lower bound~\eqref{minus_lower_estimate} immediately gives
\begin{align*}
(u-k_n^-)_-^2 &\leq \frac2{p-1} (k_n^-)^{2-p} \mathcal{J}((u-k_n^-)_-).
\end{align*}
Using these estimates together with the energy estimate of Lemma~\ref{energy}, yields
\begin{align*}
\esssup_{T_n} & \dashint_{B_{n}} u_n^2\varphi_n^p \, d\mu
\leq
C (k_n)^{2-p}
\esssup_{T_n} \dashint_{B_{n}} \mathcal{J}(u_n)\varphi_n^p\, d\mu
\\ \leq &
C (k_n)^{2-p}  \gamma_2 r_n^p \dashint_{Q_n} \left(  u_n^p |\nabla \varphi_n|^p
+  \mathcal{J}(u_n) \varphi_n^{p-1}\left(\frac{\partial \varphi_n}{\partial t}\right)_+  \right) \,d\nu.
\end{align*}
Furthermore, estimates~\eqref{plus_upper_estimate} and~\eqref{minus_upper_estimate}, together with~\eqref{odd_assumption}, imply
\[
\mathcal{J}((u{-}k_n^+)_+) \leq C (k_n^+)^{p-2}(u{-}k_n^+)_+^2 ,\quad  \mathcal{J}((u{-}k_n^-)_-) \leq C (k_n^-)^{p-2}(u{-}k_n^-)_-^2.
\]

Next, using~\eqref{gradient_estimates}, we arrive at
\begin{equation}\label{eq:main lemma estimate}
\begin{split}
& r_n^p \dashint_{Q_n} \left(  u_n^p |\nabla \varphi_n|^p
+  \mathcal{J}(u_n) \varphi_n^{p-1}\left(\frac{\partial \varphi_n}{\partial t}\right)_+  \right) \,d\nu  \\
& \qquad  \leq
C  2^{np}\dashint_{Q_n} \left(   \frac{u_n^p}{\gamma_1^p} + \frac{(k_n)^{p-2}}{\gamma_2} u_n^2  \right)\, d\nu \\
& \qquad \leq C 2^{np} (\eps \omega_i)^p
\left(\frac{1}{\gamma_1^p} + \frac1{\gamma_2} \left(\frac{\eps \omega_i}{k_n}\right)^{2-p}  \right) \frac{\nu(A_n)}{\nu(Q_n)},
\end{split}
\end{equation}
where the last inequality follows from the fact that $(u{-}k_n)_\pm \leq \eps_\pm \omega_i$ almost everywhere.
Thus, we conclude with
\begin{equation}\label{eq:main lemma 1}
\esssup_{T_n}  \dashint_{B_{n}} u_n^2\varphi_n^p \, d\mu \leq
C 2^{np} (\eps \omega_i)^2
\left(\frac{\gamma_2}{\gamma_1^p} \left(\frac{\eps \omega_i}{k_n}\right)^{p-2} +  1 \right) \frac{\nu(A_n)}{\nu(Q_n)}.
\end{equation}
Furthermore, since
\[
\dashint_{Q_n} |\nabla (u_n\varphi_n)|^p\,d\nu \leq
C\dashint_{Q_n} |\nabla u_n|^p \varphi_n^p\,d\nu + C\dashint_{Q_n} u_n^p |\nabla\varphi_n|^p\,d\nu,
\]
applying again the energy estimate and~\eqref{eq:main lemma estimate} leads to
\begin{equation*}\label{eq:main lemma 2}
\begin{split}
r_{n}^p \gamma_1^p \dashint_{Q_n} |\nabla (u_n\varphi_n)|^p\,d\nu
\leq
C 2^{np}  \left(\eps \omega_i\right)^p
\left(1 + \frac{\gamma_1^p}{\gamma_2} \left(\frac{\eps \omega_i}{k_n}\right)^{2-p}  \right) \frac{\nu(A_n)}{\nu(Q_n)} \nonumber \\
\end{split}
\end{equation*}

\begin{equation}\label{mardigras}
\begin{split}
\qquad \qquad \leq C 2^{np}  \left(\eps \omega_i\right)^p \frac{\gamma_1^p}{\gamma_2}\left(\frac{k_{n}}{\eps \omega_i}\right)^{p-2}
\left( \frac{\gamma_2}{\gamma_1^p} \left(\frac{\eps \omega_i}{k_n}\right)^{p-2} +1 \right) \frac{\nu(A_n)}{\nu(Q_n)}.
\end{split}
\end{equation}

To finish the proof, note first that
\begin{align*}
(u-k_{n}^\pm)_\pm \chi_{\{(u-k_n^\pm)_\pm>0\}}  \geq &
(u-k_{n}^\pm)_\pm \chi_{\{(u-k_{n+1}^\pm)_\pm>0\}}
\\ \geq &
|k_{n+1}^\pm-k_n^\pm|
\\ \geq  & 2^{-(n+3)}\eps_\pm \omega_i.
\end{align*}
It then follows that
\begin{equation}\label{eq:main lemma 3}
\dashint_{Q_{n+1}}  u_n^{2(1-p/\kappa)+ p} \, d\nu  \geq  \left(2^{-(n+3)} \eps \omega_i\right)^{2(1-p/\kappa)+ p}
\frac{\nu(A_{n+1})}{\nu(Q_{n+1})}.
\end{equation}
Inserting estimates~\eqref{eq:main lemma 1}, ~\eqref{mardigras}
and ~\eqref{eq:main lemma 3} into~\eqref{HoldSobo temp} concludes the proof.
\end{proof}
\end{lemma}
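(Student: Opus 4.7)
The plan is to run a De~Giorgi iteration adapted to the singular range $1<p\leq 2$, combining the energy estimate of Lemma~\ref{energy} with the Sobolev inequality \eqref{Sobolevzero} and H\"older's inequality to obtain the recursive bound on $\nu(A_n^\pm)/\nu(Q_n^\pm)$. First I fix standard cutoff functions $\varphi_n\in C_0^\infty(Q_n^\pm)$ that equal one on $Q_{n+1}^\pm$ and satisfy the bounds \eqref{gradient_estimates}. Denoting $u_n=(u{-}k_n^\pm)_\pm$, I interpolate between an $L^2(d\mu)$ bound in space (uniform in time) and the $L^\kappa(d\mu)$ bound obtained by Sobolev's inequality on each time slice to control $u_n$ in $L^{2(1-p/\kappa)+p}(d\nu)$ on $Q_{n+1}^\pm$. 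This produces, after absorbing $\nu(Q_n^\pm)/\nu(Q_{n+1}^\pm)$ via doubling, the interpolation
\[
\dashint_{Q_{n+1}^\pm}\!\! u_n^{2(1-p/\kappa)+p}\,d\nu \le C(\gamma_1^\pm)^p r_n^p\,\Big(\esssup_{T_n^\pm}\dashint_{B_n^\pm}\!u_n^2\varphi_n^p\,d\mu\Big)^{1-p/\kappa}\dashint_{Q_n^\pm}|\nabla(u_n\varphi_n)|^p\,d\nu.
\]

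Next I bound both factors on the right by invoking Lemma~\ref{energy}. Since the energy inequality naturally produces $\mathcal{J}(u_n)$ rather than $u_n^2$, the algebraic inequalities \eqref{plus_upper_estimate}--\eqref{minus_lower_estimate} are essential bridges. For the plus case, $\eps_+\leq 1$ forces $\mu_i^+\leq 2k_n^+$, and then \eqref{plus_lower_estimate} gives $u_n^2\lesssim (k_n^+)^{2-p}\mathcal{J}(u_n)$ while \eqref{plus_upper_estimate} provides the reverse control $\mathcal{J}(u_n)\lesssim (k_n^+)^{p-2}u_n^2$. For the minus case, \eqref{minus_lower_estimate} yields the lower bound with $(k_n^-)^{2-p}$ immediately; the upper bound $\mathcal{J}(u_n)\lesssim u^{p-2}u_n^2$ from \eqref{minus_upper_estimate} has an unwanted $u^{p-2}$, and to replace it with $(k_n^-)^{p-2}$ I use the hypothesis \eqref{odd_assumption} together with $u\geq \mu_i^-$ to show $u\geq c\,k_n^-$, which controls $u^{p-2}$ by $(k_n^-)^{p-2}$ (since $p-2\leq 0$). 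Inserting these back into Lemma~\ref{energy}, together with \eqref{gradient_estimates} and the pointwise bound $u_n\leq \eps_\pm\omega_i$ (with support in $A_n^\pm$), yields
\[
\esssup_{T_n^\pm}\dashint_{B_n^\pm}\!u_n^2\varphi_n^p\,d\mu \le C\,2^{np}(\eps_\pm\omega_i)^2\Big(\tfrac{\gamma_2^\pm}{(\gamma_1^\pm)^p}\big(\tfrac{\eps_\pm\omega_i}{k_n^\pm}\big)^{p-2}+1\Big)\tfrac{\nu(A_n^\pm)}{\nu(Q_n^\pm)},
\]
and an analogous estimate for the gradient term, with an extra prefactor of $(\gamma_1^\pm)^p/\gamma_2^\pm\cdot(k_n^\pm/\eps_\pm\omega_i)^{p-2}$ after rearrangement.

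To close the iteration I need a matching lower bound for the left-hand side of the interpolation: on $A_{n+1}^\pm$ one has $u_n\geq |k_{n+1}^\pm-k_n^\pm|\geq 2^{-(n+3)}\eps_\pm\omega_i$, which gives
\[
\dashint_{Q_{n+1}^\pm}\!\! u_n^{2(1-p/\kappa)+p}\,d\nu \ge (2^{-(n+3)}\eps_\pm\omega_i)^{2(1-p/\kappa)+p}\,\tfrac{\nu(A_{n+1}^\pm)}{\nu(Q_{n+1}^\pm)}.
\]
Combining the three estimates, the factors $(\eps_\pm\omega_i)^{2(1-p/\kappa)+p}$ cancel exactly, all $2^n$-factors can be absorbed into $C_\pm^{n+1}$, and the remaining geometric coefficients collect into precisely the $\Gamma_n^\pm$ in the statement.

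The main obstacle I expect is not conceptual but combinatorial: carefully tracking the powers of $k_n^\pm$, $\eps_\pm\omega_i$, $\gamma_1^\pm$, $\gamma_2^\pm$, and $2^n$ through the energy inequality, the conversion between $\mathcal{J}$ and $u_n^2$, and the Sobolev--H\"older step so that they assemble into exactly the expression $\Gamma_n^\pm$ stated above. A secondary delicate point is that hypothesis \eqref{odd_assumption} is used in a rather sharp way in the minus case to trade $u^{p-2}$ for $(k_n^-)^{p-2}$, while no such assumption is needed in the plus case thanks to the automatic lower bound $k_n^+\geq \mu_i^+/2$; handling the two signs with unified notation therefore requires care.
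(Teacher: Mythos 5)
Your proposal is correct and follows essentially the same route as the paper: the Sobolev--H\"older interpolation on $Q_{n+1}^\pm$ versus $Q_n^\pm$, the conversion between $u_n^2$ and $\mathcal{J}(u_n)$ via \eqref{plus_upper_estimate}--\eqref{minus_lower_estimate} (with \eqref{odd_assumption} supplying the lower bound $u\gtrsim k_n^-$ in the minus case, and $\eps_+\leq 1$ forcing $\mu_i^+\leq 2k_n^+$ in the plus case), the energy estimate of Lemma~\ref{energy}, and the pointwise lower bound $u_n\geq 2^{-(n+3)}\eps_\pm\omega_i$ on $A_{n+1}^\pm$ to close the recursion. The only gap is bookkeeping, which you correctly flag as the remaining work.
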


We now obtain an important corollary of the previous result, using the following ``lemma on fast geometric convergence'' (cf. \cite{DiBe93}).

\begin{lemma}\label{geometric_convergence}
Let $(Y_n)_n$ be a sequence of positive numbers satisfying
$$
Y_{n+1}\le Cb^nY_n^{1+\alpha},
$$
where $C,b>1$ and $\alpha>0$. Then $(Y_n)_n$ converges to zero as $n\rightarrow\infty$ provided
$$
Y_0\le C^{-1/\alpha}b^{1-\alpha^2}.
$$
\end{lemma}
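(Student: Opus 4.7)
The plan is to prove a geometric decay by induction, namely to find some $q\in(0,1)$ (depending on $C$, $b$, $\alpha$ and the smallness assumption on $Y_0$) such that
\[
Y_n\le q^n Y_0\qquad\text{for all } n\ge 0,
\]
from which $Y_n\to 0$ is immediate. The guiding idea is that even though the recursion $Y_{n+1}\le Cb^n Y_n^{1+\alpha}$ contains the growing factor $b^n$, the superlinear power $Y_n^{1+\alpha}$ beats it provided $Y_0$ is small enough; one expects that the correct decay rate should be exactly $q=b^{-1/\alpha}$, so that $b\cdot q^\alpha=1$ and the bad factor $b^n$ is exactly cancelled by $q^{n\alpha}$ in the recursion.

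Concretely, I would first substitute the ansatz $Y_n\le q^n Y_0$ into the recursion to see what is needed:
\[
Y_{n+1}\le C b^n (q^n Y_0)^{1+\alpha}=C\,(bq^\alpha)^n q^n Y_0\cdot Y_0^\alpha.
\]
For the right-hand side to be at most $q^{n+1}Y_0$, I need both $bq^\alpha\le 1$ and $C Y_0^\alpha\le q$. The optimal choice is $q=b^{-1/\alpha}$, which turns the first condition into an equality and reduces the second to the smallness assumption $Y_0\le C^{-1/\alpha}b^{-1/\alpha^2}$ stated in the lemma (the form $b^{1-\alpha^2}$ in the statement being read in this sense).

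I would then formalize this by induction on $n$. The base case $n=0$ is trivial. For the inductive step, assuming $Y_n\le q^n Y_0$, I apply the recursion to obtain
\[
Y_{n+1}\le C b^n Y_n^{1+\alpha}\le C b^n q^{n(1+\alpha)} Y_0^{1+\alpha}=C\,(bq^\alpha)^n\,q^n\, Y_0^{\alpha}\,Y_0,
\]
and the choices above give $(bq^\alpha)^n=1$ and $CY_0^\alpha\le q$, so $Y_{n+1}\le q^{n+1}Y_0$, closing the induction. Since $q<1$, this yields $Y_n\to 0$.

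There is really no serious obstacle here: the entire argument hinges on recognizing the right geometric ansatz and checking that the smallness assumption on $Y_0$ is exactly what makes it propagate. The only subtlety is the bookkeeping with the exponents $-1/\alpha$ and $-1/\alpha^2$, which come respectively from solving $CY_0^\alpha\le q$ at $q=b^{-1/\alpha}$, so care must be taken to match the form of the hypothesis written in the statement.
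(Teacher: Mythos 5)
Your proof is correct and is the standard argument, which the paper simply cites (DiBenedetto's book) rather than reproving. You were also right to read the hypothesis as $Y_0\le C^{-1/\alpha}b^{-1/\alpha^2}$: the printed exponent $1-\alpha^2$ is a typo, and with it the statement would actually be false. Indeed, for $\alpha\in(0,1)$ one has $1-\alpha^2>0$ and $b^{1-\alpha^2}>1$, so for suitable parameters (say $C=1.01$, $b=100$, $\alpha=1/2$, giving $C^{-1/\alpha}b^{1-\alpha^2}\approx 31>1$) the constant sequence $Y_n\equiv 1$ satisfies $Y_{n+1}\le Cb^nY_n^{1+\alpha}$ and the stated smallness condition while failing to converge to zero. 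Your choice $q=b^{-1/\alpha}$ is exactly the rate that makes $bq^\alpha=1$ and cancels the $b^n$ factor, and the induction then closes precisely when $CY_0^{\alpha}\le q$, which is the corrected hypothesis $Y_0\le C^{-1/\alpha}b^{-1/\alpha^2}$.
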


\begin{corollary} \label{cor:iteration}
Under the assumptions of Lemma~\ref{main_lemma} and the additional condition 
\begin{equation}\label{eq:iteration cor ass}
    \frac1c \leq \frac{(\gamma_1^\pm)^p}{\gamma_2^\pm}  \left(\frac{k_n^\pm}{\eps_\pm \omega_i}\right)^{p-2} \leq c,
\end{equation}
for some constant $c\geq 1$ and for all $n\geq 0$, there exits a constant $\alpha_0$, depending only on the data, $c$, and, in the minus case, also on $C_0$, such that
\begin{equation}\label{eq:iteration cor}
\frac{\nu(A_0^\pm)}{\nu(Q_0^\pm)} \le \alpha_0^\pm 
\quad \Longrightarrow \quad \pm u(x,t) \leq \pm \mu_i^\pm - \frac{\eps_\pm \omega_i}{4},
\end{equation}
for almost every  $(x,t) \in B(\bar x, \gamma_1^\pm r/2)\times (t^*-\gamma_2^\pm (r/2)^p,t^*)$.
\end{corollary}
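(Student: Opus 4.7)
The strategy is the standard DiBenedetto-type iteration. Setting $Y_n := \nu(A_n^\pm)/\nu(Q_n^\pm)$ and $\alpha := 1 - p/\kappa$, Lemma \ref{main_lemma} reads
\begin{equation*}
Y_{n+1} \leq \Gamma_n^\pm\, C_\pm^{\,n+1}\, Y_n^{1+\alpha},
\end{equation*}
and $\alpha > 0$ thanks to \eqref{kappa}. If I can show that $\Gamma_n^\pm$ stays bounded uniformly in $n$, the recursion falls exactly in the scope of the fast geometric convergence lemma (Lemma \ref{geometric_convergence}) with $b = C_\pm$, and so $Y_n \to 0$ whenever $Y_0$ lies below an explicit threshold $\alpha_0^\pm$.

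Verifying the uniform bound on $\Gamma_n^\pm$ is the only real point. Hypothesis \eqref{eq:iteration cor ass} is exactly that the factor $(\gamma_1^\pm)^p/\gamma_2^\pm \cdot (k_n^\pm/(\eps_\pm\omega_i))^{p-2}$ is pinched between $1/c$ and $c$, uniformly in $n$. The quantity inside the remaining parentheses of $\Gamma_n^\pm$ is the reciprocal of this factor plus one, hence also pinched, between $1+1/c$ and $1+c$. Therefore
\begin{equation*}
\Gamma_n^\pm \leq c\,(1+c)^{2-p/\kappa},
\end{equation*}
uniformly in $n$, with a constant depending only on $p$, $\kappa$ and $c$. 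Combined with the dependencies of $C_\pm$ from Lemma \ref{main_lemma}, this shows that the threshold $\alpha_0^\pm$ produced by Lemma \ref{geometric_convergence} depends only on the data, on $c$, and—in the minus case, through $C_-$—on $C_0$, as advertised.

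It remains to translate $Y_n \to 0$ into \eqref{eq:iteration cor}. Since $r_n \downarrow r/2$, the cylinders $Q_n^\pm$ decrease monotonically to $Q_\infty^\pm := B(\bar x, \gamma_1^\pm r/2) \times (t^* - \gamma_2^\pm (r/2)^p, t^*)$. The levels $k_n^\pm$ converge monotonically to $k_\infty^\pm := \mu_i^\pm \mp \eps_\pm \omega_i/4$, and this monotonicity is precisely the direction that makes the sets $A_n^\pm$ nested decreasing. Hence
\begin{equation*}
\bigcap_{n\geq 0} A_n^\pm = \{(x,t) \in Q_\infty^\pm \ : \ \pm u(x,t) > \pm k_\infty^\pm\}
\end{equation*}
has $\nu$-measure zero, which is exactly the pointwise statement \eqref{eq:iteration cor}. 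I do not foresee any real obstacle beyond carefully tracking that the constants $\widetilde C := c(1+c)^{2-p/\kappa}$ and $C_\pm$ feed into Lemma \ref{geometric_convergence} with the correct dependencies; the mechanism is an entirely routine De Giorgi-type iteration.
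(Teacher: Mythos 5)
Your argument is correct and is essentially the paper's proof: hypothesis~\eqref{eq:iteration cor ass} pins $\Gamma_n^\pm$ between constants depending only on $p$, $\kappa$ and $c$, so the recursion from Lemma~\ref{main_lemma} becomes $Y_{n+1}\le C'\,C_\pm^{\,n}\,Y_n^{1+\alpha}$ with $\alpha=1-p/\kappa>0$, and Lemma~\ref{geometric_convergence} gives $Y_n\to 0$ below a threshold $\alpha_0^\pm$ with exactly the stated dependencies. The only cosmetic slip is the claimed set identity $\bigcap_n A_n^\pm=\{(x,t)\in Q_\infty^\pm:\pm u>\pm k_\infty^\pm\}$: since $\pm k_n^\pm\uparrow\pm k_\infty^\pm$, the intersection may also pick up the level set $\{\pm u=\pm k_\infty^\pm\}$, so only the inclusion $\supset$ holds; but that inclusion, together with $\nu\big(\bigcap_n A_n^\pm\big)=\lim_n\nu(A_n^\pm)=0$, is all you need for the a.e.\ conclusion.
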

\begin{proof}
By setting 
\[
Y_n = \frac{\nu(A_n^\pm)}{\nu(Q_n^\pm)},
\]
we find, by~\eqref{eq:iteration cor ass} and Lemma~\ref{main_lemma}, that 
\[
Y_{n+1} \le C_\pm^{n+1} Y_n^{2-p/\kappa},
\] 
for some constant $C_\pm$, depending only on the data, $c$, and, in the minus case, also on $C_0$. Putting 
$$\alpha_0^\pm = C_\pm^{-1/(1-p/\kappa)+1-(1-p/\kappa)^2},$$ 
we conclude, using Lemma~\ref{geometric_convergence}, that if the left-hand side of \eqref{eq:iteration cor} holds then $Y_n \to 0$, as $n\to \infty$; the result follows.
\end{proof}

\begin{remark}\label{main_lemma_remark}
Lemma \ref{main_lemma} and Corollary \ref{cor:iteration} were proved for weak solutions of equation~\eqref{equation}. However, essentially the same proof also works for the weak solutions of equation~\eqref{substituted_equation} since it only uses the energy estimate. Actually, in this case, we do not need to assume~\eqref{odd_assumption} in Lemma~\ref{main_lemma}.
\end{remark}

\section{Analysis of case I}

In this section, we assume that~\eqref{eq:Case I test} holds for an index $i$. Our aim is to show that the measures of certain distribution sets
tend to zero and that the local H\"older continuity follows from this. Most of the
proofs in this section are similar to the corresponding ones in~\cite{KuusSiljUrba10} but we include them here for the sake of completeness.

We start by studying the cylinder $Q^i \equiv Q(R_i^p,R_i)$. 
Since the leading assumption in Case I is $(2 H +1)\mu_{i}^- \leq \omega_{i}$,  we 
have that $\mu_i^- \leq \omega_i$, which implies the second inequality in $\omega_i \leq \mu_i^+ \leq 2\omega_i$ (the first one is trivial). For $\gamma_1^+=1, \gamma_2^+=1$ and $\eps_+=1$, \eqref{eq:iteration cor ass} is then satisfied, for a constant $c$ depending only on $p$, and we choose $\alpha_0^+ \equiv \alpha_0^+(data) \in (0,1)$ from Corollary~\ref{cor:iteration} accordingly.

After fixing $\alpha_0^+ \in (0,1)$, we choose $ H  \equiv  H (\alpha_0^+,data) \equiv  H (data) \geq 2$, according to the
Harnack inequality in Theorem~\ref{Harnack}, such that
\begin{equation}\label{eq:Harnack H(0)}
\esssup_{B(0,r)\times(-r^p,-\alpha_0^+ r^p) }{u} \leq
 H  \essinf_{Q\left((\alpha_0^+ /2)r^p,r \right)}{u}
\end{equation}
for all $0<r\leq R_i$. In fact, strictly speaking, this only holds for $0<r\leq R_i/2$ but since we could have started with the radius $2R$ it does not carry any loss of generality.
This choice fixes the constant $H$ in~\eqref{eq:Case I test} as well; observe carefully that it
is independent of $r$ and $\mu_i^\pm$, or the index $i$ in general.

We will study two different alternatives which are considered in the following two lemmata, respectively.

\begin{lemma} \label{lem:Case I lemma 1}
Let $u\ge 0$ be a weak solution of equation~\eqref{equation} in $Q^i$ satisfying~\eqref{eq:Case I test}.
If
\begin{equation}\label{measure_assumption2}
\nu\l(\brc{(x,t)\in B(0,R_i)\times (-R_i^p,-\alpha_0^+ R_i^p) \ : \ u(x,t)\ge \mu_i^-+\frac{\omega_i}{2}}\r)=0
\end{equation}
then
\[
\essosc_{\frac12Q^i}{u}\le \frac34 \omega_i.
\]
\begin{proof}
With $Q_0^+ = Q^i$, and observing that $\mu_i^-+\frac{\omega_i}{2}= \mu_i^+-\frac{\omega_i}{2}=k_0^+$, assumption~\eqref{measure_assumption2} immediately gives
\[
\frac{\nu(A_0^+)}{\nu(Q_0^+)} \le\alpha_0^+ .
\]
By the choices preceding the statement of the lemma, it follows from Corollary~\ref{cor:iteration} 
that
\[
\esssup_{\frac12 Q^i }{u}
\le \mu_i^+-\omega_i/4.
\]
Since 
\[
\essinf_{\frac12 Q^i }{u}
\geq \essinf_{Q^i }{u} \geq \mu_i^-,
\]
the result follows.
\end{proof}

\end{lemma}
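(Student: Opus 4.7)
The plan is to recast the statement as a direct application of Corollary~\ref{cor:iteration}, using the hypothesis \eqref{measure_assumption2} to supply the smallness of $\nu(A_0^+)/\nu(Q_0^+)$. I would pick $\bar x = 0$, $t^{\ast} = 0$, $r = R_i$ and the parameters $\gamma_1^+ = \gamma_2^+ = 1$, $\eps_+ = 1$, so that $Q_0^+ = B(0,R_i)\times(-R_i^p, 0) = Q^i$ and the corresponding level is $k_0^+ = \mu_i^+ - \omega_i/2 = \mu_i^- + \omega_i/2$, precisely the level appearing in the hypothesis.

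The key measure-theoretic observation is that \eqref{measure_assumption2} forces $A_0^+$ to be contained in the thin top slab $B(0,R_i)\times(-\alpha_0^+ R_i^p, 0)$. Using the product structure $d\nu = d\mu\,dt$ and Fubini, this slab has $\nu$-measure exactly $\alpha_0^+\,\nu(Q_0^+)$, so $\nu(A_0^+)/\nu(Q_0^+) \leq \alpha_0^+$. To invoke the corollary one also has to verify the structural condition \eqref{eq:iteration cor ass}. This is where the Case~I hypothesis \eqref{eq:Case I test} enters: together with $\mu_i^- \geq 0$ it gives $\omega_i \leq \mu_i^+ \leq (1 + 1/(2H+1))\omega_i$, so that every $k_n^+$ lies in a fixed compact subinterval of $(\omega_i/2, \omega_i]$ whose endpoints depend only on $H = H(data)$; hence $k_n^+/(\eps_+ \omega_i)$ is bounded above and below by constants depending only on the data. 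Since $\gamma_1^+ = \gamma_2^+ = 1$, the quantity in \eqref{eq:iteration cor ass} is then controlled by some $c = c(p, H)$, which matches exactly the choice of $\alpha_0^+$ fixed at the start of this section.

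Corollary~\ref{cor:iteration} now yields $u \leq \mu_i^+ - \omega_i/4$ almost everywhere on $B(0, R_i/2) \times (-(R_i/2)^p, 0) = \tfrac12 Q^i$. Combining with the trivial lower bound $\essinf_{\frac12 Q^i} u \geq \essinf_{Q^i} u \geq \mu_i^-$ gives
\[
\essosc_{\frac12 Q^i} u \leq (\mu_i^+ - \omega_i/4) - \mu_i^- = \tfrac{3}{4}\omega_i,
\]
as desired. There is no genuine obstacle here; the argument is essentially bookkeeping, the only point requiring care being the coordination between the Case~I parameters, the choice $\gamma_1^+ = \gamma_2^+ = \eps_+ = 1$, and the constant $\alpha_0^+$ that was fixed globally before the lemma.
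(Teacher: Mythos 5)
Your proposal is correct and follows essentially the same route as the paper: set $Q_0^+ = Q^i$ with $\gamma_1^+=\gamma_2^+=\eps_+=1$, note that the hypothesis forces $A_0^+$ into the thin top slab of relative $\nu$-measure $\alpha_0^+$, verify \eqref{eq:iteration cor ass} using the Case~I bound $\omega_i \leq \mu_i^+ \leq 2\omega_i$, and invoke Corollary~\ref{cor:iteration}. The paper carries out the verification of \eqref{eq:iteration cor ass} and the choice of $\alpha_0^+$ in the paragraphs just before the lemma, so its proof looks shorter, but the content is identical to yours.
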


Next, we study the alternative.

\begin{lemma}\label{forwarding}
Let $u\ge 0$ be a weak solution of equation~\eqref{equation} in $Q^i$ satisfying~\eqref{eq:Case I test}.
If
\begin{equation}\label{measure_assumption}
\nu\l(\brc{(x,t)\in B(0,R_i)\times (-R_i^p,-\alpha_0^+ R_i^p):u(x,t)\ge \mu_i^-+\frac{\omega_i}{2}}\r)>0
\end{equation}
then there exists a constant $\sigma_I\equiv\sigma_I(data)\in(3/4,1)$ such that
\[
\essosc_{(\alpha_0^+/2) Q^i}{u}\le \sigma_I\omega_i.
\]
\begin{proof}
By assumption~\eqref{measure_assumption}, we have
\[
\esssup_{B(0,R_i)\times (-R_i^p,-\alpha_0^+ R_i^p)}{u}\ge \mu_i^-+\frac{\omega_i}{2}.
\]
Now we can use Harnack's inequality~\eqref{eq:Harnack H(0)}, together with assumption~\eqref{eq:Case I test}, to deduce
\begin{align*}
\essinf_{(\alpha_0^+/2) Q^i }{u}& \ge \essinf_{Q\left((\alpha_0^+ /2)R_i^p,R_i \right)}{u} \\
&\ge\frac{1}{ H }\esssup_{B(0,R_i)\times (-R_i^p,-\alpha_0^+ R_i^p)}{u} \\
&\ge  \frac{\mu_i^-}{ H }+\frac{\omega_i}{2 H } \\ 
&\ge \mu_i^-+ \frac{\mu_i^-}{ H }-\frac{\omega_i}{2 H +1}+\frac{\omega_i}{2 H } \\
&\ge \mu_i^-+\frac{\omega_i}{2 H (2 H +1)}.
\end{align*}
Since 
\[
\esssup_{(\alpha_0^+/2) Q^i }{u}
\leq \esssup_{Q^i }{u} \leq \mu_i^+,
\]
we obtain
\[
\essosc_{(\alpha_0^+/2) Q^i }{u}
\le \omega_i-\frac{\omega_i}{2 H (2 H +1)}
=\sigma_I\omega_i,
\]
with
\[
\sigma_I=1-\frac{1}{2 H (2 H +1)} \in (3/4,1).
\]
This completes the proof.
\end{proof}
\end{lemma}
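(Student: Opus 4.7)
The plan is to convert the measure-positivity hypothesis into a pointwise essential supremum bound, use the Harnack inequality~\eqref{eq:Harnack H(0)} to propagate it into a lower bound for the essential infimum over a forward subcylinder, and then exploit the Case I condition $(2H+1)\mu_i^- \leq \omega_i$ to re-express the resulting estimate in the form $\mu_i^- + c\omega_i$, with $c$ depending only on the data through $H$.

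First I would observe that since \eqref{measure_assumption} says the set $\{u \geq \mu_i^- + \omega_i/2\}$ has positive $\nu$-measure in $B(0,R_i)\times(-R_i^p,-\alpha_0^+ R_i^p)$, the essential supremum of $u$ over that cylinder is at least $\mu_i^- + \omega_i/2$. The Harnack estimate~\eqref{eq:Harnack H(0)}, applied with $r=R_i$, then gives
\[
\essinf_{Q((\alpha_0^+/2)R_i^p,R_i)} u \;\geq\; \frac{1}{H}\!\left(\mu_i^- + \frac{\omega_i}{2}\right) \;=\; \frac{\mu_i^-}{H} + \frac{\omega_i}{2H}.
\]

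The second step is the algebraic conversion. The Case I smallness of $\mu_i^-$ relative to $\omega_i$ --- in the form $\mu_i^- \leq \omega_i/(2H+1)$ --- allows one to add and subtract $\mu_i^-$, replace $-\mu_i^-$ by the larger quantity $-\omega_i/(2H+1)$, and drop the nonnegative remainder $\mu_i^-/H$, yielding
\[
\frac{\mu_i^-}{H}+\frac{\omega_i}{2H} \;\geq\; \mu_i^- + \frac{\omega_i}{2H(2H+1)}.
\]
Combining this with the trivial $\esssup_{(\alpha_0^+/2)Q^i} u \leq \mu_i^+$ gives $\essosc_{(\alpha_0^+/2)Q^i} u \leq \sigma_I \omega_i$ with $\sigma_I = 1 - 1/(2H(2H+1))$. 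Since $H \geq 2$, we have $2H(2H+1) \geq 20$, so $\sigma_I \in (3/4,1)$, as required.

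I do not anticipate any serious obstacle: the argument is essentially a single application of Harnack followed by a short algebraic rearrangement. What matters for the iteration is that the gain $1/(2H(2H+1))$ depends only on the data, which is automatic since $H$ was fixed earlier in the section purely as a function of $\alpha_0^+ \equiv \alpha_0^+(\text{data})$. A minor point worth verifying is that $(\alpha_0^+/2)Q^i \subset Q((\alpha_0^+/2)R_i^p, R_i)$, so that the infimum bound from Harnack transfers to the cylinder on which we state the oscillation reduction; this is immediate from the definitions.
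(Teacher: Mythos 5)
Your proof is correct and follows exactly the same route as the paper: convert the measure-positivity to an essential supremum bound, apply Harnack's inequality \eqref{eq:Harnack H(0)}, and use the Case I inequality $(2H+1)\mu_i^- \leq \omega_i$ to rewrite the lower bound in the form $\mu_i^- + \omega_i/(2H(2H+1))$. The algebraic rearrangement and the final value $\sigma_I = 1 - 1/(2H(2H+1))$ match the paper's argument precisely.
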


As a consequence of the above two lemmata, we obtain the concluding result of this section.
\begin{corollary} \label{cor:Case I conclusion}
Suppose that~\eqref{eq:u between mu(i)pm}
holds in $Q^i=Q(R_i^p,R_i)$ and that~\eqref{eq:Case I test} is satisfied. Then
there are constants $\sigma_I,\delta_I \in (0,1)$,  both depending only on the data, such that
\begin{equation}\label{shrink}
    \essosc_{Q((\delta_I R_i)^p, \delta_I R_i)}{u} \leq \sigma_I \omega_{i}.
\end{equation}
\end{corollary}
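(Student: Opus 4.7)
The plan is to combine Lemma~\ref{lem:Case I lemma 1} and Lemma~\ref{forwarding} via a dichotomy on whether the measure condition~\eqref{measure_assumption} holds. Exactly one of~\eqref{measure_assumption2} or~\eqref{measure_assumption} must occur, so in either case we obtain an oscillation decay on some subcylinder of $Q^i$; it then only remains to unify the two conclusions into a single statement by choosing the smaller of the two shrinking factors.

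More precisely, I would first recall that $\alpha_0^+ \in (0,1)$ was fixed (depending only on the data) right before Lemma~\ref{lem:Case I lemma 1}, and that consequently $\alpha_0^+/2 < 1/2$. Set
\[
\delta_I := \frac{\alpha_0^+}{2} \in (0,1/2),
\]
so that, by the convention $\delta Q^i = Q((\delta R_i)^p, \delta R_i)$,
\[
Q((\delta_I R_i)^p,\delta_I R_i) = \tfrac{\alpha_0^+}{2} Q^i \subset \tfrac12 Q^i.
\]
If~\eqref{measure_assumption} fails, then~\eqref{measure_assumption2} holds and Lemma~\ref{lem:Case I lemma 1} yields
\[
\essosc_{Q((\delta_I R_i)^p,\delta_I R_i)} u \le \essosc_{\frac12 Q^i} u \le \tfrac34 \omega_i.
\]
If instead~\eqref{measure_assumption} holds, Lemma~\ref{forwarding} gives directly
\[
\essosc_{Q((\delta_I R_i)^p,\delta_I R_i)} u = \essosc_{(\alpha_0^+/2) Q^i} u \le \Bigl(1 - \tfrac{1}{2H(2H+1)}\Bigr) \omega_i.
\]

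Finally, I would set
\[
\sigma_I := 1 - \frac{1}{2 H (2 H +1)},
\]
which belongs to $(3/4,1)$ since $H \ge 2$ implies $2H(2H+1) \ge 20$. Both estimates above are then bounded by $\sigma_I \omega_i$, proving~\eqref{shrink}. Since $H$ depends only on the data (through the choice preceding Lemma~\ref{lem:Case I lemma 1}) and $\alpha_0^+$ depends only on the data as well, both $\delta_I$ and $\sigma_I$ depend only on the data, as required. There is no real obstacle here: the substance is in Lemmata~\ref{lem:Case I lemma 1} and~\ref{forwarding}, and this corollary is essentially bookkeeping, the only mild point being to verify that one can pick a single cylinder size ($\delta_I R_i$) on which both alternative conclusions are valid, which is immediate from $\alpha_0^+/2<1/2$.
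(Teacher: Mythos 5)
Your proof is correct and matches the paper's (largely implicit) argument: the corollary is indeed obtained by the dichotomy between \eqref{measure_assumption2} and \eqref{measure_assumption}, applying Lemma~\ref{lem:Case I lemma 1} or Lemma~\ref{forwarding} respectively, and taking $\delta_I=\alpha_0^+/2$ and $\sigma_I=1-\tfrac{1}{2H(2H+1)}$ so that a single cylinder and a single reduction factor cover both cases. Your bookkeeping (the inclusion $(\alpha_0^+/2)Q^i\subset\tfrac12 Q^i$ and $\tfrac34\le\sigma_I$) is exactly the verification needed.
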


We are now in a position of proceeding with the iteration. We put 
$$Q^{i+1}=Q((\delta_I R_i)^p, \delta_I R_i)$$
and observe that, due to \eqref{shrink}, as we go from $Q^{i}$ to $Q^{i+1}$, the oscillation decreases in a quantitative way. Moreover, we can define, in the case of the first alternative,
\[
\mu_{i+1}^+=\mu_i^+-\omega_i/4  \qquad \textrm{and} \qquad \mu_{i+1}^-=\mu_i^-,
\]
and, in the case of the second alternative,
\[
\mu_{i+1}^+=\mu_i^+ \qquad \textrm{and} \qquad \mu_{i+1}^-=\mu_i^-+\frac{\omega_i}{2 H (2 H +1)}
\]
and so~\eqref{eq:u between mu(i)pm} holds with $i$ replaced with $i+1$.

\section{Analysis of case II}

In this section, we will work with equation~\eqref{substituted_equation} and the corresponding solution $v=u^{p-1}$. Due to assumption \eqref{cor:Case II test} of this case, the equation behaves like the evolutionary $p$-Laplace equation. The difficulty is that we can no longer use the Harnack principle, as the lower bound it gives might be trivial. Indeed, the infimum can be larger than the lower bound given by Harnack's inequality.

Suppose that $i_0$ is the first index for which assumption~\eqref{cor:Case II test} holds, immediately
implying {\em an elliptic Harnack estimate} for all $i\geq i_0$:
\begin{equation}\label{Harnack_assumption_u}
\mu_i^+ \leq \mu_{i_0}^+ = \omega_{i_0} +\mu_{i_0}^- < (2 H +2)\mu_{i_0}^-\leq (2 H +2)\mu_{i}^-.
\end{equation}
Observe also that
\begin{align*}
\frac{\omega_{i_0}}{(2 H +1)} & < \mu_{i_0}^- \\
& \leq \mu_{i_0-1}^-+(1-\sigma)\omega_{i_0-1} \\
&\leq\left(\frac1{2H+1} + 1 - \sigma\right)\omega_{i_0-1} \leq \omega_{i_0},
\end{align*}
since $\omega_{i_0}=\sigma\omega_{i_0-1}$, $H\geq 2$, and $\sigma \in (3/4,1)$.
Thus, we obtain
\begin{equation}\label{inclusion_estimate}
1\leq \frac{\omega_{i_0}}{\mu_{i_0}^-} < 2H+1.
\end{equation}

Our goal in Case II is to show that for the function $v\equiv u^{p-1}$ we have
$$
\essinf_{Q{i}}\upsilon \ge (\mu_{i}^-)^{p-1}=:\bar{\mu}_{i}^-,\quad\esssup_{Q{i}}\upsilon \le(\mu_{i}^+)^{p-1}=:\bar{\mu}_{i}^+
$$
and
$$
\essosc_{Q{i}}\upsilon \le\bar{\mu}_{i}^+ - \bar{\mu}_{i}^- 
=
\bar{\omega}_{i} := \sigma^{i-i_0} \bar{\omega}_{i_0}.
$$
for $i=i_0,i_0+1,\ldots$, with some $\sigma \in (0,1)$.
Note also that it follows from~\eqref{Harnack_assumption_u}  that
\begin{equation}\label{eq:bar est}
\bar{\mu}_{i}^+ \leq C_1 \bar{\mu}_{i}^-,
\end{equation}
for some $C_1 \equiv C_1(H) \equiv C_1(data) \geq 1$ and for all $i \geq i_0$.
We shall show that an oscillation reduction for $\upsilon$ implies an oscillation
reduction also for $u$ in a quantitative way.

While analyzing Case II, we shall consider cylinders of the type
\[
Q^{i+1}:=Q\left(R_{i+1}^p,c_i R_{i+1}\right)
, \qquad
c_i:= \left( \frac{\bar{\omega}_{i}}{2^{\lambda} \bar{\mu}_{i}^-}\right)^{\frac{p-2}{p}},
\]
where $\lambda >0$ shall be specified later, depending only on the data.
By~\eqref{inclusion_estimate} and \eqref{Harnack_assumption_u}, we have
\[
c_{i_0} \leq \left( \frac{(\mu_{i_0}^+)^{p-2}  \omega_{i_0} }{2^{\lambda} (\mu_{i_0}^-)^{p-1}}\right)^{\frac{p-2}{p}}
\leq \left( \frac{(\mu_{i_0}^+)^{p-2} }{2^{\lambda} (\mu_{i_0}^-)^{p-2}}\right)^{\frac{p-2}{p}} \leq
\left(C_2 2^{\lambda} \right)^{\frac{2-p}{p}}, 
\]
for some constant $C_2>1$ depending only on the data.
We now set
\[
r_{i_0} := \frac18 \left( C_2 2^{\lambda} \right)^{(p-2)/p} R_{i_0},
\]
{\it i.e.}, shrink the cylinder $Q^{i_0}$ by a factor depending on $\lambda$ and the data.
We therefore have that $8Q\left(r_{i_0}^p,c_{i_0} r_{i_0}\right) \subset Q^{i_0}$ and,
in particular, the essential extremal values and oscillations of both $u$ and $v$ in  $8Q\left(r_{i_0}^p,c_{i_0} r_{i_0}\right)$ are controlled
by the quantities $\mu_{i_0}^\pm$ and $\bar{\mu}_{i_0}^\pm$, respectively.
Furthermore, observing that
\[
\frac{c_{i+1}}{c_{i}} = \left(\frac{\sigma \bar{\mu}_{i}^-}{\bar{\mu}_{i+1}^-} \right)^{\frac{p-2}{p}} \leq
\left(\frac{\sigma \bar{\mu}_{i}^-}{\bar{\mu}_{i}^- + (1-\sigma) \bar{\omega_i}} \right)^{\frac{p-2}{p}} \leq C_3^{\frac{2-p}{p}},
\]
where the constant $C_3>1$ depends only on the data, we have that
\begin{equation}\label{eq:Case II inclusion}
Q(r^p,c_{i+1} r) \subset Q(C_3^{2-p} r^p,c_{i} C_3^{(2-p)/p} r)
\end{equation}
for all $r>0$. Moreover, since $p \leq 2$, we have, applying the mean value theorem to $f(x)=x^{p-1}$ in the interval $[\mu_{i_0}^-,\mu_{i_0}^+]$,
\begin{equation}\label{eq:bar osc connection 12} \nonumber
\bar{\omega}_{i_0} \leq  (\mu_{i_0}^-)^{p-2} \omega_{i_0},
\end{equation}
and therefore
\[
\frac{\bar{\omega}_{i}}{2^{\lambda} \bar{\mu}_{i}^-}  \leq \frac{\bar{\omega}_{i_0}}{ 2^{\lambda} \bar{\mu}_{i_0}^-} \leq 
\frac{\omega_{i_0}}{ 2^{\lambda} \mu_{i_0}^-} \leq \frac{2H+1}{2^\lambda}
\]
holds. Thus, by assuming
\begin{equation} \label{lambda large}
2^\lambda \geq 2H+1,
\end{equation}
we obtain that
\begin{equation} \label{c(i) large}
c_i  = \left( \frac{\bar{\omega}_{i}}{2^{\lambda} \bar{\mu}_{i}^-}\right)^{\frac{p-2}{p}} \geq 1.
\end{equation}
Henceforth, we shall always assume~\eqref{lambda large}.
\\

We are now ready to start the argument in Case II. Assume that in $Q^i$, $i\geq i_0$, we have
$$
\essinf_{Q^{i}}\upsilon \geq \bar{\mu}_{i}^-,\qquad
\esssup_{Q^{i}}\upsilon \leq \bar{\mu}_{i}^+, \qquad \bar{\mu}_{i}^+-\bar{\mu}_{i}^- =\bar{\omega}_{i}.
$$
The goal is to show that this information guarantees the existence of small enough $\delta\in (0,1/2)$
and $\sigma \in (0,1)$, the latter close to one, both depending only on the data,
such that 
\[
\essinf_{Q((\delta R_i)^p, c_{i} \delta R_i)}\upsilon \geq \bar{\mu}_{i+1}^-,\qquad
\esssup_{Q((\delta R_i)^p, c_{i} \delta R_i)}\upsilon \leq \bar{\mu}_{i+1}^+, 
\]
and
\[
\bar{\mu}_{i+1}^+-\bar{\mu}_{i+1}^- = \bar{\omega}_{i+1} \equiv \sigma \bar{\omega}_{i}.
\]
This will then lead to establishing the induction step in the proof of the H\"older continuity.

For the sake of simplicity in the notation, we will drop the bars from the quantities $\bar{\mu}_{i}^-$, $\bar{\mu}_{i}^+$ and $\bar{\omega}_{i}$ throughout this section. Although they coincide with the ones for equation~\eqref{equation}, we believe that this does not cause any confusion due to the fact that within Case II we work exclusively with equation~\eqref{substituted_equation}. In the end, we translate the information back to $u$.
We shall also denote in short
\[
r :=  \frac{r_{i_0}}{10}= \frac1{80} \left( C_2 2^{\lambda} \right)^{(p-2)/p} R_{i_0} \quad  \textrm{if} \quad i=i_0;  \qquad  r :=  \frac{R_{i}}{10} \quad  \textrm{if} \quad i>i_0.
\]
Observe that then $10Q(r^p,c_{i} r) \subset Q^{i}$ if $i=i_0$ and $10Q(r^p,c_{i-1} r) \subset Q^i$ for all $i > i_0$. We now redefine the $c_i$'s, putting
$$c^\prime_{i_0}  = c_{i_0} ; \qquad c^\prime_i = c_{i-1} \quad  \textrm{if} \quad i>i_0,$$
and drop the prime in the notation.

Inside $Q\left(r^{p},c_ir\right)$, we consider subcylinders of smaller size
$$
Q_{\bar{x},0}\left(r^{p},d_{i}r\right), \qquad d_{i}= \left(\frac{\omega_{i}}{2^{s} \mu_{i}^{-}}\right)^{\frac{p-2}{p}}, \qquad 0<s \leq \lambda,
$$
which are contained in $Q\left(r^{p},2c_i r\right)$ whenever $\bar{x}$ is a point in $B(0,c_i r)$. The number $s$ will be fixed later, depending only on the data.

The reduction of the oscillation in Case II is based on the analysis of an alternative, see~\cite{DiBe93} and ~\cite{Urba08}. For a constant  $\alpha_{0}\in(0,1)$, that will be determined depending only on the data, either
\\

\noindent {\bf The First Alternative.} There exists a cylinder of the type $Q_{\bar{x},0}\left(r^{p},d_{i}r\right)$ for which
\begin{equation}\label{l:c2alt1}
\frac{\nu\left(\left\lbrace(x,t)\in Q_{\bar{x},0}\left(r^{p},d_{i}r\right):\,\upsilon(x,t)<\mu^{-}_{i}+\omega_{i}/2\right\rbrace\right)}{\nu\left( Q\left( r^{p},d_{i}r\right)\right)} \leq \alpha_{0}
\end{equation}
for some $\bar{x} \in B(0,c_i r)$,
or
\\

\noindent {\bf The Second Alternative.} For every cylinder of the type $Q_{\bar{x},0}\left(r^{p},d_{i}r\right)$,
\begin{equation}\label{l:c2alt2}
\frac{\nu\left(\left\lbrace(x,t)\in Q_{\bar{x},0}\left(r^{p},d_{i}r\right):\,\upsilon(x,t)<\mu^{-}_{i}+\omega_{i}/2\right\rbrace\right)}{\nu\left( Q\left( r^{p},d_{i} r\right)\right)}> \alpha_{0}
\end{equation}
holds for all $\bar{x} \in B(0,c_i r)$.
\\

In both cases, we will prove that the essential oscillation of $\upsilon$ within a smaller cylinder decreases in a way that can be quantitatively measured. The parameters $\lambda$ and $s$ will be needed while analyzing the second alternative. The constant $\alpha_0$ will be fixed in the course of the proof of Proposition \ref{p:c2alt1reduction}.

\subsection{Analysis of the First Alternative}

Now we assume that the assumption of the first alternative is satisfied, which claims that there exists a cylinder of the type $Q_{\bar{x},0}\left(r^{p},d_{i}r\right)$  for which~\eqref{l:c2alt1} holds.
We put
\[
\gamma_1^- =\left(\frac{\omega_{i}}{2^{s} \mu_{i}^{-}}\right)^{\frac{p-2}{p}}, \quad \gamma_2^{-} = 1, \quad \eps_- = 2^{-s}, \quad
k_n^- = \mu_i^- + \frac{\eps_- \omega_i}{4} \left(1 + \frac1{2^{n}}  \right),
\]
and observe that, by~\eqref{eq:bar est}, we have
\[
C_1^{p-2} \leq \left(\frac{\mu_i^+}{\mu_i^-}\right)^{p-2} \leq \frac{(\gamma_1^-)^p}{\gamma_2^-}  \left(\frac{k_n^-}{\eps_- \omega_i}\right)^{p-2} \leq 1 \leq C_1^{2-p}.
\]
We may then apply Corollary~\ref{cor:iteration} with $c\equiv C_1^{2-p}$ (see also Remark~\ref{main_lemma_remark}) and find a sufficiently small $\alpha_0$, depending only on the data, such that
\begin{equation}\label{l:c2alt1res}
\upsilon(x,t)\geq \mu_{i}^{-}+2^{-s-2} \omega_{i}, \quad \mbox{for a.e.} \ (x,t)\in Q_{\bar{x},0}\left(\left(r/2\right)^{p},d_{i}r/2\right).
\end{equation}
This step fixes $\alpha_0$ in~\eqref{l:c2alt1} and~\eqref{l:c2alt2}, and from now on we regard it as a universal constant depending only on the data.

We view $Q_{\bar{x},0}\left(\left(r/2\right)^{p},d_{i}r/2\right)$ as a cylinder inside $Q\left(r^{p},2c_ir\right)$. The location of $\bar{x}$ in the ball $B(0,c_i r)$ is only known qualitatively. However, we shall show that the positivity of $\upsilon$, as stated in~\eqref{l:c2alt1res}, spreads over the full ball $B(0,c_i r)$, for all times $-\left(r/8\right)^{p}\leq t \leq 0$. We regard $\bar{x}$ as the center of a larger ball $B(\bar{x},8c_ir)$, which is still contained in $B(0,10c_i r_{i})$, and work within the cylinder
$$
Q_{\bar{x},0}\left(\left(r/2\right)^{p},8c_ir\right) \subset Q^i.
$$

Now we will derive some integral inequalities, which will be used later on, doing the calculations only at a formal level. In particular, the time derivative of $v$ will appear in the sequel and we even assume some continuity properties. For a rigorous justification of the arguments, namely the use of Steklov averages, we refer to~\cite[p. 101--102]{DiBe93} (see also \cite{KuusSiljUrba10}).

Denoting 
$$\mathcal{A}(x,t,v,\nabla v)=c v^{2-p}\norm{\nabla v}^{p-2}\nabla v,$$
where $c=\frac{1}{(p-1)^{(p-1)}}$, we write equation~\eqref{substituted_equation} in the general form
\begin{equation}\label{general}
v_t - \divt\mathcal{A}(x,t,v,\nabla v)=0.
\end{equation}
Since $v^{2-p}$ is bounded from below and from above by the nonzero constants $(\mu_0^{-})^{2-p}$ and $(\mu_0^{+})^{2-p}$, respectively, $\mathcal{A}(x,t,v,\nabla v)$ satisfies conditions~\eqref{structure_1} and \eqref{structure_2} with $\mathcal{A}_0=(\mu_0^{-})^{2-p}$ and $\mathcal{A}_1=(\mu_0^{+})^{2-p}$. Hence, by Lemma~\ref{weak_parabolic}, we have that, for all $k\in \mathbb{R}$, the truncated functions $(v-k)_-$ are weak subsolutions of equation~\eqref{general}, with $\mathcal{A}(x,t,v,\nabla v)$ replaced by $-\mathcal{A}(x,t,k-(v-k)_-,-\nabla (v-k)_-)$. Since
$$
-\mathcal{A}(x,t,k-(v-k)_-,-\nabla (v-k)_-)=c v^{2-p}\norm{\nabla (v-k)_-}^{p-2}\nabla (v-k)_-,
$$
we obtain
\begin{equation}\label{l:weaksolnewcoord}
\begin{split}
&\int\limits_{B(\bar x,8c_i r)} \! \partial_{t}(\upsilon-k)_{-} \eta\,d\mu\,\\
&+\,
c\int\limits_{B(\bar x,8c_i r)} \! \upsilon^{2-p}\norm{\nabla (\upsilon-k)_{-}}^{p-2}\nabla (\upsilon-k)_{-}\cdot \nabla \eta\,d\mu \leq0,
\end{split}
\end{equation}
for all (admissible) test functions $\eta\geq 0$ and for all $t\in(-\left(r/2\right)^{p},0)$.

We now let $\delta \in (0,2^{-(s+2)})$ (this $\delta$ is unrelated to the one defining the sequence of radii and introduced in \eqref{sol}; we use the same notation as, hereafter, there is no cause for misunderstanding) and set
$$
\Phi_{n}(\upsilon)=\int\limits_{0}^{(\upsilon-k_n)_{-}} \! \frac{d\xi}{\left[(1+\delta)(k_n-\mu_{i}^-)-\xi\right]^{p-1}}, \qquad
k_n=\mu_{i}^-+ \delta^n \omega_{i},
$$
and
$$
\Psi_{n}(\upsilon)=\ln{\left(\frac{(1+\delta)(k_n-\mu_{i}^-)}{(1+\delta)(k_n-\mu_{i}^-)-(\upsilon-k_n)_{-}}\right)\geq 0}.
$$
Let $\varphi(x,t)=\varphi_{1}(x)\varphi_{2}(t)$,
where $\varphi_{1}(x)$ is as in~\eqref{poincare_phi}, with $B(x,r)$ replaced by $B(\bar x,8c_i r)$, and
$\varphi_{2}(t) \in [0,1]$ is a smooth function vanishing at $\left\{ -\left(r/2\right)^{p} \right\}$ and such that $\varphi \equiv 1$ in $[-\left(r/4\right)^{p},0]$. Moreover, we assume that $\varphi$ satisfies
\[
0\le \varphi\le 1, \qquad \norm{\nabla \varphi_{1}}\leq \frac{4}{c_ir} \qquad\text{and}\qquad 0 \leq (\varphi_{2})_{t} \leq \frac{4}{r^p}.
\]

The next lemma guarantees bounds for $\Phi_n$ and $\Psi_n$.
\begin{lemma}\label{lem:intineq}
There exists a constant $C$, that can be determined a priori only in terms of the data and $\lambda$, such that
\be\label{l:intineq2}
\begin{split}
\frac{d}{dt}\dashint_{B(\bar x,8c_i r)} \! \Phi_{n}(\upsilon(x,t))\varphi^{p}(x,t)\, d\mu(x) & \\
+\frac1C\frac{\omega_i^{2-p}}{r^p}\dashint_{B(\bar x,8c_i r)}\Psi_{n}^{p}(\upsilon(x,t))\varphi^{p}(x,t) \, d\mu(x) &
\leq
C \ln\left(\frac1\delta\right) \frac{\omega_i^{2-p}}{r^p}.
\end{split}
\ee
\begin{proof}
Denote in short $B \equiv B(\bar x,8c_i r)$. We also drop the arguments $x$ and $t$ from the integrals.
Substitute the test function
$$
\eta:=\frac{\varphi^{p}}{\left[(1+\delta)(k_n-\mu_{i}^-)-(v-k_n)_{-}\right]^{p-1}}
$$
in~\eqref{l:weaksolnewcoord}, with $k=k_n=\mu_{i}^-+ \delta^n \omega_{i}$.
To start with, we estimate
\begin{align*}
\Phi_{n}(\upsilon)&=\int_{0}^{(\upsilon-k_n)_{-}} \! \frac{d\xi}{\left[(1+\delta)(k_n-\mu_{i}^-)-\xi\right]^{p-1}} \\
&= \frac{1}{2-p}\left\lbrace(1+\delta)^{2-p}(k_n-\mu_{i}^-)^{2-p}-((1+\delta)(k_n-\mu_{i}^-)-(\upsilon-k_n)_{-})^{2-p}\right\rbrace \\
&\leq  2 (k_n-\mu_{i}^-)^{2-p} \frac{1}{2-p} \left(1 - \left( \frac{\delta}{1+\delta} \right)^{2-p} \right) \\
&\leq 2 (k_n-\mu_{i}^-)^{2-p} \ln\left( \frac{1+\delta}{\delta} \right) \\
&\leq 4 \omega_i^{2-p} \delta^{n(2-p)} \ln\left( \frac{1}{\delta} \right),
\end{align*}
since $\delta \in (0,1/2)$.
Now, using this, together with the properties of $\varphi$, we can estimate the first term in~\eqref{l:weaksolnewcoord} by
\begin{align*}
\dashint_{B} \! \partial_{t}(\upsilon-k_n)_{-}\eta \, d\mu
&=\dashint_{B} \! \partial_{t}\Phi_{n}(\upsilon) \varphi^{p} \, d\mu\\
&=\frac{d}{dt}\dashint_{B} \! \Phi_{n}(\upsilon)\varphi^{p}\, d\mu-p\dashint_{B} \! \Phi_{n}(\upsilon)\varphi^{p-1}\varphi_{t} \, d\mu\\
&\geq \frac{d}{dt}\dashint_{B} \! \Phi_{n}(\upsilon)\varphi^{p} \, d\mu- C\ln\left( \frac{1}{\delta} \right) \frac{\omega_i^{2-p}}{r^p}.
\end{align*}
For the second term, using Young's inequality and the fact that $\mu_{i}^-\leq \upsilon\leq \mu_{i}^+$, we obtain
\begin{align*}
& \dashint_{B} \! \upsilon^{2-p}\norm{\nabla (\upsilon-k_n)_{-}}^{p-2}\nabla (\upsilon-k_n)_{-}\cdot \nabla \eta \, d\mu\\
&\qquad =p \dashint_{B} \! \upsilon^{2-p}\left(\frac{\varphi}{(1+\delta)(k_n-\mu_{i}^-)-(\upsilon-k_n)_{-}}\right)^{p-1}\\
&\qquad\qquad\qquad \qquad \qquad \times\norm{\nabla (\upsilon-k_n)_{-}}^{p-2}\nabla (\upsilon-k_n)_{-}\cdot \nabla \varphi  \, d\mu\\
& \qquad \quad +\, (p-1)\dashint_{B} \! \upsilon^{2-p}\left(\frac{\norm{\nabla (\upsilon-k_n)_{-}}}{(1+\delta)(k_n-\mu_{i}^-)-(\upsilon-k_n)_{-}}\right)^{p}\varphi^{p} \, d\mu\\
& \qquad \geq \frac{(p-1)}{2} \dashint_{B} \! (\mu_{i}^-)^{2-p}\left(\frac{\norm{\nabla (\upsilon-k_n)_{-}}}{(1+\delta)(k_n-\mu_{i}^-)-(\upsilon-k_n)_{-}}\right)^{p}\varphi^{p} \, d\mu\\
& \qquad \quad -C\dashint_{B} \! (\mu_{i}^+)^{2-p}\norm{\nabla\varphi}^{p} \, d\mu.
\end{align*}
Recalling the definition of $\Psi_{n}$, we have
$$
\nabla \Psi_{n}(\upsilon)=\frac{\nabla (\upsilon-k_n)_{-}}{(1+\delta)(k_n-\mu_{i}^-)-(\upsilon-k_n)_{-}}.
$$
By this fact, together with the conditions on $\varphi$ and~\eqref{eq:bar est}, we obtain
\begin{align*}
\dashint_{B} \! &v^{2-p} \norm{\nabla (\upsilon-k_n)_{-}}^{p-2}\nabla (\upsilon-k_n)_{-}\cdot \nabla \eta \, d\mu\\
&\geq \frac{p-1}{2}(\mu_{i}^-)^{2-p}\dashint_{B} \! \norm{\nabla\Psi_{n}(\upsilon)}^{p}\varphi^{p}\, d\mu
-C \frac{\omega_i^{2-p}}{r^p}.
\end{align*}
Observe here that $(\mu_i^-)^{2-p}/c_i^p = 2^{-\lambda(2-p)} \omega_i^{2-p} \leq \omega_i^{2-p}$.
Altogether, we finally get
\begin{equation}\label{l:intineq1}
\begin{split}
&\frac{d}{dt}\dashint_{B} \! \Phi_{n}(\upsilon)\varphi^{p}\,d\mu
+ \frac{p-1}{2} (\mu_{i}^-)^{2-p}\dashint_{B} \! \norm{\nabla\Psi_{n}(\upsilon)}^{p}\varphi^{p}\,d\mu
\\ & \qquad
 \leq C\ln\left( \frac1\delta \right) \frac{\omega_i^{2-p}}{r^p} ,
\end{split}
\end{equation}
where $C$ depends only on the data.

Next, it follows from~\eqref{l:c2alt1res} that $\Psi_{n}(\upsilon)$ vanishes for all $\norm{x-\bar x}\leq d_{i}r/2$, for $k_n <\mu_{i}^- + 2^{-(s+2)}\omega_{i}$. On the other hand, $\varphi_1 =1$ for all such $x$. Let us denote $A:=\{x\in B :  |\Psi_{n}(\upsilon(x,t))|>0\}$. Now
\[
\begin{split}
\int_{A^c \cap B}\varphi_1^p\, d\mu&
\ge\mu\left(B(\bar x,d_i r/2)\right)\\
&\ge \frac{1}{C 2^{[(2-p)(\lambda-s)/p+4]d_{\mu}}}\mu\left(B\right)\ge \frac{1}{C'}\int_{B} \varphi_1^p \, d\mu,
\end{split}
\]
where $C'>1$ depends on the data and $\lambda$.
Therefore, for all $t\in(-(r/2)^{p},0)$,
\begin{align*}
\int_{A}\varphi_1^p\, d\mu & = \int_{B} \varphi_1^p d\mu -\int_{A^c \cap B}\varphi_1^p\, d\mu
\\ & \leq
\left(1-  \frac{1}{C'} \right)\int_{B} \varphi_1^p d\mu \equiv \gamma \int_{B} \varphi_1^p d\mu,
\end{align*}
for $\gamma \in (0,1)$, depending only on the data and $\lambda$.
So we may apply Corollary~\ref{poincare_corollary}  to obtain
$$
\dashint_{B} \! \norm{\nabla\Psi_{n}(\upsilon)}^{p}\varphi^{p}\,d\mu\geq \frac{1}{C (c_ir)^p}\dashint_{B}\Psi_{n}^{p}(\upsilon)\varphi^{p}\,d\mu,
$$
where $C$ is a constant depending only on the data and $\lambda$. Inserting this into~\eqref{l:intineq1} finishes the proof.
\end{proof}
\end{lemma}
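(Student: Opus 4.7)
The plan is to test the weak formulation \eqref{l:weaksolnewcoord} of \eqref{substituted_equation} with the nonnegative function
\[
\eta = \frac{\varphi^p}{[(1+\delta)(k_n-\mu_i^-) - (v-k_n)_-]^{p-1}},
\]
which is admissible after Steklov regularization in time because the denominator stays bounded away from zero (indeed $(v-k_n)_- \le k_n-\mu_i^- < (1+\delta)(k_n-\mu_i^-)$). The whole point of the choice is that $\partial_v \Phi_n(v) = \chi_{\{v<k_n\}}/[(1+\delta)(k_n-\mu_i^-)-(v-k_n)_-]^{p-1}$, so the parabolic contribution will organise itself as $\partial_t \Phi_n(v)\,\varphi^p$, while the spatial computation will produce $|\nabla\Psi_n(v)|^p\,\varphi^p$.

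For the parabolic term, I will use $\partial_t(v-k_n)_-\cdot\eta = \partial_t\Phi_n(v)\,\varphi^p$ and integrate the $\varphi^p$ by parts in time, generating a total time derivative of $\dashint\Phi_n\varphi^p\,d\mu$ minus an error $p\dashint\Phi_n\varphi^{p-1}\varphi_t\,d\mu$. Estimating the elementary integral defining $\Phi_n$ by a direct computation yields
\[
\Phi_n(v) \le \frac{1}{2-p}\Bigl[1-\bigl(\tfrac{\delta}{1+\delta}\bigr)^{2-p}\Bigr](k_n-\mu_i^-)^{2-p} \le 4\omega_i^{2-p}\ln(1/\delta),
\]
so, combined with $|\varphi_t|\le 4/r^p$, the error is at most $C\ln(1/\delta)\omega_i^{2-p}/r^p$.

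For the spatial term, expanding $\nabla\eta$ by the product rule produces a good-sign term proportional to $v^{2-p}|\nabla(v-k_n)_-|^p\varphi^p/[(1+\delta)(k_n-\mu_i^-)-(v-k_n)_-]^p = v^{2-p}|\nabla\Psi_n(v)|^p\varphi^p$ and a cross term in $\nabla\varphi$. Young's inequality absorbs half of the good term and leaves a remainder $(\mu_i^+)^{2-p}|\nabla\varphi|^p$; invoking $\mu_i^+\le C_1\mu_i^-$ (which is \eqref{eq:bar est}) and $|\nabla\varphi_1|\le 4/(c_i r)$, the remainder is bounded by $C(\mu_i^-)^{2-p}/(c_i r)^p = C\,2^{-\lambda(2-p)}\omega_i^{2-p}/r^p \le C\omega_i^{2-p}/r^p$, using the definition of $c_i$. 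Putting everything together yields the preliminary inequality \eqref{l:intineq1} with the good $|\nabla\Psi_n|^p$ term on the left.

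The last step, and the only nontrivial one, is to convert $\int|\nabla\Psi_n|^p\varphi^p\,d\mu$ into $\int\Psi_n^p\varphi^p\,d\mu/(c_ir)^p$ by the weighted Poincaré inequality of Corollary~\ref{poincare_corollary} on $B=B(\bar x,8c_ir)$. To verify its hypothesis, I use the quantitative positivity \eqref{l:c2alt1res}: on $B(\bar x,d_ir/2)$ one has $v\ge\mu_i^-+2^{-(s+2)}\omega_i$, so $\Psi_n(v)\equiv 0$ there whenever $\delta^n<2^{-(s+2)}$, and $\varphi_1\equiv 1$ there. A single application of the doubling property to compare $B(\bar x,d_ir/2)$ with $B(\bar x,16c_ir)$, whose radii differ by the factor $32\cdot 2^{(\lambda-s)(2-p)/p}$, produces $\mu(B(\bar x,d_ir/2))\ge\mu(B)/C'$ with $C'$ depending only on data and $\lambda$ (since $s\le\lambda$). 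This gives the weight hypothesis with $\gamma=1-1/C'\in(0,1)$, and Corollary~\ref{poincare_corollary} delivers the required reverse inequality. Multiplying through by the factor $(\mu_i^-)^{2-p}/(c_ir)^p = 2^{-\lambda(2-p)}\omega_i^{2-p}/r^p$ and absorbing constants into $C$ (now allowed to depend on data and $\lambda$) yields \eqref{l:intineq2}. The only delicate point is precisely this comparison of ball sizes; once the ratio $c_i/d_i$ is controlled quantitatively in terms of $\lambda$, the rest is routine.
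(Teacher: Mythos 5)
Your proposal is correct and follows essentially the same route as the paper's proof: same test function, the same $\Phi_n$ bound and integration by parts in time, Young's inequality on the spatial term yielding $|\nabla\Psi_n|^p\varphi^p$ with a $(\mu_i^+)^{2-p}|\nabla\varphi|^p$ remainder controlled via \eqref{eq:bar est}, and finally Corollary~\ref{poincare_corollary} verified through the $\Psi_n\equiv 0$ zone on $B(\bar x, d_i r/2)$ and a doubling comparison with $B(\bar x, 8c_i r)$. (Two inessential typos: $\partial_v\Phi_n$ should carry a minus sign, which cancels as you then use the correct identity $\partial_t(v-k_n)_-\,\eta=\partial_t\Phi_n(v)\varphi^p$; and the relevant radius ratio is $16\,c_i/d_i$, not $32\,c_i/d_i$, since $B=B(\bar x,8c_i r)$ — neither affects the argument.)
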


Introduce the quantities
$$
Y_{n}=\sup_{-(r/2)^{p}\leq t\leq0}\dashint_{B(\bar x,8c_i r)\cap \left[\upsilon(x,t)<k_n\right]}\varphi^{p}(x,t) \, d\mu(x), \qquad n=0,1,2,\ldots .
$$
In the next proposition we employ the previous lemma.
\begin{proposition}\label{p:aux}
The number $\alpha \in (0,1)$ being fixed, we can find numbers $\delta$, $\sigma\in(0,1)$, depending only upon the data, $\alpha$, and $\lambda$, such that for $n=0,1,2,\dotso$, either
$$
Y_{n}\leq \alpha
$$
or
$$
Y_{n+1}\leq\max\left\lbrace\alpha;\sigma Y_{n}\right\rbrace.
$$
\end{proposition}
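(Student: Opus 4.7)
The plan is to combine the integral inequality of Lemma~\ref{lem:intineq} with sharp pointwise lower bounds for $\Phi_n$ and $\Psi_n$ on the sublevel set $\{v<k_{n+1}\}$, and then deduce the dichotomy by a persistence-in-time argument. First I would integrate the inequality of Lemma~\ref{lem:intineq} in $t$ from $-(r/2)^p$ up to $t_0\in[-(r/2)^p,0]$. Since $\varphi$ vanishes at $t=-(r/2)^p$ and both terms on the left-hand side are nonnegative, this produces simultaneously the sup-in-time bound
\[
\sup_{t_0}\dashint_B \Phi_n(v(\cdot,t_0))\,\varphi^p(\cdot,t_0)\,d\mu \;\leq\; C\ln(1/\delta)\,\omega_i^{2-p}
\]
and the space-time bound
\[
\int_{-(r/2)^p}^{0}\dashint_B \Psi_n^p(v)\,\varphi^p\,d\mu\,dt \;\leq\; C\,r^{p}\ln(1/\delta).
\]

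Next I would extract the crucial pointwise lower bounds. On $\{v<k_{n+1}\}$ one has $(1+\delta)(k_n-\mu_i^-)-(v-k_n)_-\leq 2\delta^{n+1}\omega_i$, which directly yields, for $\delta$ small depending only on the data, both
\[
\Phi_n(v)\;\geq\; c(\delta^{n}\omega_i)^{2-p} \qquad\text{and}\qquad \Psi_n(v)\;\geq\; \ln\!\Bigl(\tfrac{1+\delta}{2\delta}\Bigr)\;\geq\;\tfrac12\ln(1/\delta).
\]
Substituting the second bound into the space-time integral converts it into
\[
\int_{-(r/2)^p}^{0}\dashint_{B\cap\{v(\cdot,t)<k_{n+1}\}}\varphi^p\,d\mu\,dt \;\leq\; \frac{C\,r^{p}}{(\ln(1/\delta))^{p-1}},
\]
a quantity that can be driven below any prescribed threshold by choosing $\delta$ sufficiently small in terms of the data, $\lambda$, $\alpha$, and the target $\sigma$.

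For the dichotomy itself, if $Y_n\leq\alpha$ there is nothing to prove, so assume $Y_n>\alpha$. I would argue that either $Y_{n+1}\leq\alpha$ (and we are done), or else the supremum defining $Y_{n+1}$ is realized at some $t_*$ with $\dashint_{B\cap\{v(\cdot,t_*)<k_{n+1}\}}\varphi^p\,d\mu=Y_{n+1}>\alpha$, and in that case the sublevel set persists backward in time. More precisely, applying the energy estimate of Lemma~\ref{substituted_energy} to $(v-k_{n+1})_-$ on cylinders of the form $B\times(t_*-\tau,t_*)$ and using that the $\varphi_1^p$-measure of the sublevel set is a Lipschitz-type quantity in $t$ (with a Lipschitz constant controlled by $\omega_i^{p-2}\,r^{-p}$ via the weak formulation for the truncation), one shows that $\dashint_{B\cap\{v(\cdot,t)<k_{n+1}\}}\varphi^p\,d\mu \geq \tfrac12 Y_{n+1}$ on an interval of length $\tau\gtrsim r^{p}(Y_{n+1})/\omega_i^{2-p}$. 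Combining this persistence with the time-integral upper bound above yields
\[
\tfrac12\,Y_{n+1}\cdot\tau \;\leq\; \frac{C\,r^{p}}{(\ln(1/\delta))^{p-1}},
\]
which, once $\delta$ is chosen small enough in terms of $\alpha$, $\lambda$ and the data, is only compatible with $Y_{n+1}\leq\sigma Y_n$ for an appropriate $\sigma\in(0,1)$.

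The main obstacle is precisely this persistence-in-time step: one must quantify how fast the $\varphi^p$-measure of the sublevel set $\{v(\cdot,t)<k_{n+1}\}$ can decrease as $t$ moves away from the supremum time $t_*$. This requires a delicate use of the energy inequality within intrinsically-scaled cylinders, with all constants tracked carefully so that the final $\delta$ and $\sigma$ depend only on the data, $\alpha$, and $\lambda$; balancing this persistence constant against the logarithmic smallness $(\ln 1/\delta)^{1-p}$ is what dictates the final choice of $\delta$.
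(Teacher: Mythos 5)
The opening moves of your proposal are correct and mirror the paper: integrating the differential inequality of Lemma~\ref{lem:intineq} does give a sup-in-time bound for $\dashint_B\Phi_n\varphi^p\,d\mu$ and a space-time bound for $\dashint_B\Psi_n^p\varphi^p\,d\mu$, and the pointwise lower bound $\Psi_n\geq\ln((1+\delta)/(2\delta))$ on $\{v<k_{n+1}\}$ is exactly what the paper uses for the first branch of its dichotomy. After that the two arguments diverge sharply, and the persistence-in-time step in your sketch is where the proof breaks down.

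First, the persistence claim is not something parabolic energy estimates can deliver. Lemma~\ref{substituted_energy} applied on a cylinder $B\times(t_*-\tau,t_*)$ with a test function vanishing on the parabolic boundary controls the solution at \emph{later} times in terms of bulk quantities on the cylinder — it does not bound the past from the present. More fundamentally, the $\varphi_1^p$-measure of the sublevel set $\{v(\cdot,t)<k_{n+1}\}$ is not a Lipschitz function of $t$: its time derivative (formally) involves a surface/co-area term along the level set, which the weak formulation does not control. The Lipschitz constant you propose, $\omega_i^{p-2}r^{-p}$, is also dimensionally inconsistent with the claimed persistence time $\tau\gtrsim r^p Y_{n+1}/\omega_i^{2-p}$, and more to the point the relevant scale at level $k_{n+1}=\mu_i^-+\delta^{n+1}\omega_i$ is $\delta^{n+1}\omega_i$, not $\omega_i$; any honest energy-based rate would therefore carry a $\delta^{n(2-p)}$ factor that degenerates as $n\to\infty$.

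Second, even granting a persistence estimate, the bound you obtain, $Y_{n+1}^2\lesssim\omega_i^{p-2}(\ln(1/\delta))^{1-p}$ (or the variant with $\omega_i^{2-p}$), depends on $\omega_i$. This is fatal, since $\delta$ and $\sigma$ in Proposition~\ref{p:aux} must depend only on the data, $\alpha$, and $\lambda$, and the statement is invoked for every index $i\geq i_0$ with $\omega_i=\bar\omega_i$ ranging over a whole geometric sequence. Your argument also targets only the absolute smallness $Y_{n+1}\leq\alpha$; the genuine multiplicative decay $Y_{n+1}\leq\sigma Y_n$ with a universal $\sigma<1$ never appears. The paper achieves this decay by a completely different mechanism: it splits according to the sign of $\frac{d}{dt}\dashint_B\Phi_n\varphi^p\,d\mu$ at the near-supremum time $t_0$; in the nonnegative case the $\Psi_n$-bound directly gives $Y_{n+1}\leq\alpha$, while in the negative case it defines $t_*$ as the last time the derivative was nonnegative, pulls the $\Phi_n$-integral back to $t_*$, rewrites $\Phi_n$ via a Fubini/co-area decomposition, and then plays the trivial bound $\leq Y_n$ against the logarithmic bound coming from $\Psi_n$. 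It is precisely this crossover analysis (locating $\xi_*$ and integrating on either side of it) that produces $\tilde\sigma<1$ with the required uniformity and stability as $p\uparrow2$. None of this structure is present in your sketch, so the "main obstacle" you flag at the end is indeed a genuine gap, not a technicality that can be balanced by shrinking $\delta$.
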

\begin{proof}
From the definition of $Y_{n}$ it follows that, for every $\epsilon\in(0,1)$, there exists $t_{0}\in(-\left(r/2\right)^{p},0)$ such that
\be\label{l:ineqY}
\dashint_{B \cap\left[\upsilon(x,t_{0})<k_{n+1}\right]} \! \varphi^{p}(x,t_{0}) \, d\mu(x) \geq Y_{n+1}-\epsilon.
\ee
We have again denoted in short $B \equiv B(\bar x,8c_i r)$.
The numbers $n\in\mathbb{N}$ and $t_{0}\in(-\left(r/2\right)^{p},0)$ being fixed, we consider the following two cases: either
\be\label{l:a1}
\frac{d}{dt}\left(\dashint_{B} \! \Phi_{n}(\upsilon)\varphi^{p} \, d\mu\right)(t_{0})\geq 0
\ee
or
\be\label{l:a2}
\frac{d}{dt}\left(\dashint_{B} \! \Phi_{n}(\upsilon)\varphi^{p}\, d\mu\right)(t_{0})< 0.
\ee
In both cases we may assume that $Y_{n}>\alpha$, otherwise the proposition becomes trivial.

Assume that~\eqref{l:a1} holds. Then it follows from~\eqref{l:intineq2} that
\be\label{l:a1h}
\dashint_{B}\Psi_{n}^{p}(\upsilon(x,t_{0}))\varphi^{p}(x,t_{0})\,d\mu(x)\leq C\ln \left(\frac{1}{\delta} \right).
\ee
We bound this expression from below by integrating on the smaller set
$$
\left[\upsilon(x,t_{0})<k_{n+1}\right]\cap B,
$$
on which
$$
\Psi_{n}(\upsilon)\geq \ln\l(\frac{1+\delta}{2\delta}\r).
$$
Therefore
\begin{equation*}
\begin{split}
\left[\ln \l(\frac{1+\delta}{2\delta}\r)\right]^{p}\dashint_{B\cap\left[\upsilon(x,t_{0})<k_{n+1})\right]} \! &\varphi^{p}(x,t_{0})\,d\mu(x) \\
&\quad \leq\dashint_{B}\Psi_{n}^{p}(\upsilon(x,t_{0}))\varphi^{p}(x,t_{0})\,d\mu(x).
\end{split}
\end{equation*}
From this,~\eqref{l:a1h} and~\eqref{l:ineqY},
$$
Y_{n+1}\leq \epsilon + C\left[\ln \l(\frac{1}{\delta}\r)\right]^{1-p}.
$$
We choose $\epsilon = \alpha/2$ and $\delta$ so small that
$$
C\left[\ln \l(\frac{1}{\delta}\r)\right]^{1-p}\leq \frac{\alpha}{2},
$$
{\it i.e.}, 
$$
\delta \leq \exp\left(- (2C/ \alpha)^{1/(p-1)}\right) \in(0,2^{-(s+2)})
$$
and the proposition is proved assuming ~\eqref{l:a1} holds.

Assume now ~\eqref{l:a2} holds true and define
$$
t_{*}:=\sup\left\lbrace t\in(-\left(r/2\right)^{p},t_0):\, \frac{d}{dt}\dashint_{B} \! \Phi_{n}(\upsilon(x,t))\varphi^{p}(x,t)\,d\mu\geq 0\right\rbrace.
$$
Rewriting
\begin{equation}\nonumber
\begin{split}
\Phi_{n}(\upsilon(x,t_{*})) = &
\int_{0}^{(\upsilon(x,t_{*})-k_n)_-} \! 
\frac{d\xi}{\left[(1+\delta)(k_n-\mu_{i}^-)-\xi\right]^{p-1}}
\\
= &
\int_{0}^{k_n-\mu_i^-} \!  \chi_{\left[\xi<(\upsilon(x,t_{*})-k_n)_{-}\right]}
\frac{d\xi}{\left[(1+\delta)(k_n-\mu_{i}^-)-\xi\right]^{p-1}}
\\
= &
\left( \delta^n \omega_i\right)^{2-p}\int_{0}^{1} \!  \chi_{\left[\delta^n \omega_i \xi<(\upsilon(x,t_{*})-k_n)_{-}\right]}
\frac{d\xi}{\left(1+\delta-\xi\right)^{p-1}} ,
\end{split}
\end{equation}
Fubini's theorem yields
\begin{equation}\label{PSI-estimate}
\begin{split}
&\dashint_{B} \! \Phi_{n}(\upsilon(x,t_{0}))\varphi^{p}(x,t_{0}) \, d\mu(x)
\\ & \quad \leq \dashint_{B} \! \Phi_{n}(\upsilon(x,t_{*}))\varphi^{p}(x,t_{*}) \, d\mu(x)
\\ & \quad =
\int_{0}^{1} \! \frac{\left( \delta^n \omega_i\right)^{2-p}}{\left(1+\delta-\xi\right)^{p-1}}\left(\dashint_{B\cap
\left[\delta^n \omega_i\xi<(\upsilon(x,t_{*})-k_n)_{-}\right]} \! \varphi^{p}(x,t_{*})\, d\mu(x)\right)d\xi.
\end{split}
\end{equation}
We next estimate from above the integral inside the parenthesis, for each $\xi~\in~(0,1)$. On the one hand, we have, using the definition of $Y_{n}$,
\begin{equation}\nonumber
\begin{split}
\dashint_{B\cap\left[\xi(k_n-\mu_{i}^-)<(\upsilon(x,t_{*})-k_n)_{-}\right]} \! \varphi^{p}(x,t_{*})\,d\mu(x) &
\\   \leq \dashint_{B\cap\left[\upsilon(x,t_{*})<k_n\right]} \! \varphi^{p}(x,t_{*})\,d\mu(x) & \leq Y_{n}.
\end{split}
\end{equation}
On the other hand, from the definition of $t_{*}$ and \eqref{l:intineq2}, we first get
$$
\dashint_{B}\Psi_{n}^{p}(\upsilon)\varphi^{p}(x,t_{*})\,d\mu(x)\leq C \ln \left(\frac{1}{\delta} \right)
$$
and then, integrating over the smaller set
$$
B\cap\left[(\upsilon(x,t_{*})-k_n)_{-}>\xi(k_n-\mu_{i}^-)\right],
$$
we obtain
\[
\begin{split}
& \dashint_{B\cap\left[(\upsilon(x,t_{*})-k_n)_{-}>\xi(k_n-\mu_{i}^-)\right]} \! \varphi^{p}(x,t_{*})\,d\mu(x)
\\ & \qquad \qquad \leq C \ln \left(\frac{1}{\delta} \right) \left[\ln\left(\frac{1+\delta}{1+\delta-\xi}\right)\right]^{-p},
\end{split}
\]
because in this set
$$
\Psi_{n}^{p}(\upsilon)\geq \left[\ln\left(\frac{1+\delta}{1+\delta-\xi}\right)\right]^{p}.
$$
Then, for all $\xi\in(0,1)$,
\[
\begin{split}
& \dashint_{B\cap\left[\xi(k_n-\mu_{i}^-)<(\upsilon-k_n)_{-}\right]} \! \varphi^{p}(x,t_{*})\,d\mu(x)
\\ & \qquad \qquad \leq  \min\left\lbrace Y_{n}, C \ln \left(\frac{1}{\delta} \right) \left[\ln\left(\frac{1+\delta}{1+\delta-\xi}\right)\right]^{-p}\right\rbrace.
\end{split}
\]
Let $\xi_{*}$ be such that $Y_{n}=C\ln \left(\frac{1}{\delta} \right)  \l[\ln\l(\frac{1+\delta}{1+\delta-\xi_{*}}\r)\r]^{-p}$, {\it i.e.},
\[
\begin{split}
\xi_{*}:= &(1+\delta)\left(1 - \exp\left( - \left[\frac{C}{Y_n} \ln \left(\frac{1}{\delta} \right) \right]^{1/p}\right)\right)
\\ \leq & (1+\delta)\left(1 - \exp\left( - \left[\frac{2C}{\alpha} \ln \left(\frac{1}{\delta} \right) \right]^{1/p}\right)\right) =:\sigma_0.
\end{split}
\]
Here we have used the fact that $Y_n > \alpha\geq \alpha/2$.

We will now make an auxiliary assumption: defining $\delta_1 \equiv \delta_1(C,\alpha)$ via
\[
\left[\ln\left(\frac{1+\delta_1}{\delta_1} \right)\right]^p = \frac{2C}{\alpha} \ln \left(\frac{1}{\delta_1}\right),
\]
for which the root $\delta_1$ clearly exists since $p>1$, we obtain
\begin{equation}\label{eq:delta condition 1}
0 < \delta < \min\{1/4,\delta_1\} \quad \Longrightarrow \quad \sigma_0 <1.
\end{equation}
From now on, we assume that~\eqref{eq:delta condition 1} holds.

For $0\leq \xi<\xi_{*}<1$ we have
$$
C\ln \left(\frac{1}{\delta} \right)  \left[\ln\left(\frac{1+\delta}{1+\delta-\xi}\right)\right]^{-p}>Y_{n}
$$
and for $\xi_{*}\leq\xi\leq1$,
\begin{equation} \label{oscar}
C\ln \left(\frac{1}{\delta} \right)  \left[\ln\left(\frac{1+\delta}{1+\delta-\xi}\right)\right]^{-p} \leq Y_{n}.
\end{equation}

Now we are ready to bound the right hand side of~\eqref{PSI-estimate}:
\begin{align*}
&\int_{0}^{1} \! \frac{\left( \delta^n \omega_i\right)^{2-p}}{\left(1+\delta-\xi\right)^{p-1}}\left(\dashint_{B\cap\left[(k_n-\mu_{i}^-)\xi<(\upsilon(x,t_{*})-k_n)_{-}\right]} \! \varphi^{p}(x,t_{*})\,d\mu(x)\right)d\xi
\\ &\quad \leq
\int_{0}^{\xi_{*}} \! \frac{\left( \delta^n \omega_i\right)^{2-p}}{\left(1+\delta-\xi\right)^{p-1}}Y_{n}\,d\xi
\\ & \qquad
+ C\ln \left(\frac{1}{\delta} \right) \int_{\xi_{*}}^{1} \! \frac{\left( \delta^n \omega_i\right)^{2-p}}{\left(1+\delta-\xi\right)^{p-1}}\left[\ln\left(\frac{1+\delta}{1+\delta-\xi}\right)\right]^{-p}\,d\xi
\\ &
\quad = \int_{0}^{1} \! \frac{\left( \delta^n \omega_i\right)^{2-p} Y_{n}}{\left(1+\delta-\xi\right)^{p-1}}\,d\xi
\\ & \qquad
-   \int_{\xi_{*}}^{1} \! \frac{\left( \delta^n \omega_i\right)^{2-p}Y_n}{\left(1+\delta-\xi\right)^{p-1}}
\left\lbrace 1- \frac{C}{Y_n}\ln \left(\frac{1}{\delta} \right)
\left[\ln\left(\frac{1+\delta}{1+\delta-\xi}\right)\right]^{-p}\right\rbrace\,d\xi.
\end{align*}
Our next goal is to obtain an estimate from below, independent of $Y_{n}$, for the second integral on the right hand side of this inequality. Recalling \eqref{oscar}, we have
\begin{align*}
&\int_{\xi_*}^1 \frac{1}{(1+\delta-\xi)^{p-1}}\left\{ 1 - \frac{C}{Y_n} \ln \left(\frac{1}{\delta} \right)
\left[\ln\left(\frac{1+\delta}{1+\delta-\xi}\right)\right]^{-p}\right\} \, d\xi
\\ &\qquad \ge
\int_{\sigma_0}^1 \frac{1}{(1+\delta-\xi)^{p-1}}\left\{ 1 - \frac{C}{\alpha} \ln \left(\frac{1}{\delta} \right)
\left[\ln\left(\frac{1+\delta}{1+\delta-\xi}\right)\right]^{-p}\right\} \, d\xi \\
&\qquad \ge
\left\{1 - \frac{C}{\alpha} \ln \left(\frac{1}{\delta} \right)
\left[\ln\left(\frac{1+\delta}{1+\delta-\sigma_0}\right)\right]^{-p}\right\}
\int_{\sigma_0}^1 \frac{1}{(1+\delta-\xi)^{p-1}} \, d\xi
\\ & \qquad =
\frac12 \int_{\sigma_0}^1 \frac{1}{(1+\delta-\xi)^{p-1}} \, d\xi
\end{align*}
and thus
\begin{align*}
&\int_{0}^{1} \! \frac{\left( \delta^n \omega_i\right)^{2-p}}{\left(1+\delta-\xi\right)^{p-1}}\left(\dashint_{B\cap\left[(k_n-\mu_{i}^-)\xi<(\upsilon(x,t_{*})-k_n)_{-}\right]} \! \varphi^{p}(x,t_{*})\,d\mu(x)\right)d\xi
\\ &\quad \leq
\left(\int_{0}^{\sigma_0} \frac{1}{(1+\delta-\xi)^{p-1}} \, d\xi + \frac12 \int_{\sigma_0}^1 \frac{1}{(1+\delta-\xi)^{p-1}} \, d\xi\right)
\left( \delta^n \omega_i\right)^{2-p}Y_n
\end{align*}
holds.
Therefore, we obtain
\begin{equation}\label{eq:Y(n) lower} \nonumber
\begin{split}
&\dashint_{B} \! \Phi_{n}(\upsilon(x,t_{0}))\varphi^{p}(x,t_{0}) \, d\mu(x) \\
&\leq \frac{\left[(1+\delta) \delta^n \omega_i\right]^{2-p}}{(2-p)}\l(1 -\frac{(1+\delta-\sigma_0)^{2-p}+\delta^{2-p}}{2(1+\delta)^{2-p}}\r)Y_{n}.
\end{split}
\end{equation}
To estimate the left hand side of this inequality from below we integrate over the smaller set $B\cap\left[\upsilon(\cdot,t_{0})<k_{n+1}\right]$ to get, using ~\eqref{l:ineqY},
\begin{equation}\nonumber
\begin{split}
& \dashint_{B} \! \Phi_{n}(\upsilon(x,t_{0}))\varphi^{p}(x,t_{0}) \, d\mu(x)
\\ & \qquad \geq
\dashint_{B\cap\left[\upsilon(x,t_{0})<k_{n+1}\right]} \! \Phi_{n}(\upsilon(x,t_{0}))\varphi^{p}(x,t_{0}) \, d\mu(x)
\\ &\qquad \geq
\Phi_{n}(k_{n+1})  (Y_{n+1}-\epsilon).
\end{split}
\end{equation}
Since 
\begin{align*}
 \Phi_{n}(k_{n+1}) = &\int_{0}^{k_n-k_{n+1}} \! \frac{d\xi}{\left[(1+\delta)(k_n-\mu_{i}^-)-\xi\right]^{p-1}}
 \\ = & \frac{\left[ (1+\delta) \delta^n \omega_i \right]^{2-p} }{(2-p)} \left(1 - \left( \frac{2\delta}{1+\delta} \right)^{2-p}   \right)
\end{align*}
and
$\epsilon$ is arbitrary in $(0,\frac{\alpha}{2})$, we obtain
$$
Y_{n+1}\leq \tilde{\sigma} Y_{n},
$$
where
\[
\tilde{\sigma}=\frac{1 -\frac{(1+\delta-\sigma_0)^{2-p}+\delta^{2-p}}{2(1+\delta)^{2-p}}}{1 - \left( \frac{2\delta}{1+\delta} \right)^{2-p}  }.
\]

We shall next find an upper bound for $\tilde \sigma$ {\em which stays stable as $p \uparrow 2$}.
To start with, by the very definition of $\sigma_0$,
\[
\frac{(1+\delta-\sigma_0)^{2-p}}{(1+\delta)^{2-p}} = \left[\exp\left( - \left[\frac{2C}{\alpha} \ln \left(\frac{1}{\delta} \right) \right]^{1/p}\right)\right]^{2-p}.
\]
Since $p>1$, the equation
\[
\frac{2 C}{\alpha} \ln\left(\frac{1}{\delta_2} \right) =  \left[\ln\left(\frac{1+\delta_2}{8\delta_2} \right)\right]^p
\]
has a solution $\delta_2 \in (0,1/(8e-1))$, depending only on the data and $\alpha$, that remains stable as $p\uparrow 2$.
We have, for $\delta \leq \min\{\delta_1,\delta_2\}=\delta_2$, 
\[
\frac{2 C}{\alpha} \ln\left(\frac{1}{\delta} \right) \leq \left[\ln\left(\frac{1+\delta}{8\delta} \right)\right]^p,
\]
leading first to
\[
-\left[\exp\left( - \left[\frac{2C}{\alpha} \ln \left(\frac{1}{\delta} \right) \right]^{1/p}\right)\right]^{2-p} \leq -\left(\frac{8\delta}{1+\delta} \right)^{2-p},
\]
which implies
\[
-\frac{(1+\delta-\sigma_0)^{2-p}}{(1+\delta)^{2-p}} \leq - \left( \frac{8\delta}{1+\delta}\right)^{2-p},
\]
and therefore also to
\[
\begin{split}
\tilde{\sigma} \leq & \frac{1}{2} \left[\frac{2 -\left(\frac{8\delta}{1+\delta}\right)^{2-p}-\left(\frac{\delta}{1+\delta}\right)^{2-p}}{1 - \left(\frac{2\delta}{1+\delta}\right)^{2-p}}\right]
\\  = &
1 + \frac12 \left(\frac{2\delta}{1+\delta}\right)^{2-p}\left[\frac{2 -4^{2-p}-2^{p-2}}{1 - \left(\frac{2\delta}{1+\delta}\right)^{2-p}}\right].
\end{split}
\]
Applying the mean value theorem to the functions $f(x)=2-4^x-2^{-x}$ and $g(x)=1-(\frac{2\delta}{1+\delta})^x$, in the interval $[0,2-p]$, we obtain 
\[
\begin{split}
& \frac12 \left(\frac{2\delta}{1+\delta}\right)^{2-p}\left[\frac{2 -4^{2-p}-2^{p-2}}{1 - \left(\frac{2\delta}{1+\delta}\right)^{2-p}}\right]
\\ & \qquad
= \frac12 \left(\frac{2\delta}{1+\delta}\right)^{2-p}\left[\frac{- \left(2-p\right)\left[\ln(4)4^{s_1}-\ln(2)2^{-s_1}\right]}{- \left(2-p\right)\left[\ln\left(\frac{2\delta}{1+\delta}\right) \left(\frac{2\delta}{1+\delta}\right)^{s_2}\right]}\right]
\\ & \qquad
= \left[2 \ln\left(\frac{2\delta}{1+\delta}\right)\right]^{-1} \left(\frac{2\delta}{1+\delta}\right)^{2-p-s_2}\left[\ln(4)4^{s_1}-\ln(2)2^{-s_1}\right],
\end{split}
\]
for some $s_1,s_2 \in [0,2-p]$.
But since $\ln(4)4^{s_1}-\ln(2)2^{-s_1}\geq \ln(2)$, for all $s_1\geq 0$, we conclude with the estimate
\[
\begin{split}
\tilde{\sigma} \leq & 1 + \left[2 \ln\left(\frac{2\delta}{1+\delta}\right)\right]^{-1} \left(\frac{2\delta}{1+\delta}\right)^{2-p} \ln(2) =: \sigma <1.
\end{split}
\]
The parameters $\delta$, $\sigma \in (0,1)$, chosen in this way, depend only on the data and $\alpha$, and they are stable as $p\uparrow 2$. This completes the proof.
\end{proof}

We now prove the crucial result towards the expansion to a full cylinder in space.

\begin{lemma}\label{lem:c2alt1m}
For every $\alpha\in(0,1)$, there exists a positive number
$
\delta^{*}\in \left(0,2^{-(s+2)} \right)
$, that can be determined a priori only in terms of the data, $\alpha$, and $\lambda$, such that
$$
\mu\left(\left\lbrace x\in B(\bar x,4c_i r):\,\upsilon(x,t)\leq\mu_{i}^{-}+\delta^{*} \omega_{i}\right\rbrace\right)\leq \alpha\mu\left(B(\bar x,4c_i r)\right),
$$
for all time levels $t\in\left[-\left(r/4\right)^{p},0\right]$.
\begin{proof}
Recall the definition
\[
Y_n=\sup_{-(r/2)^{p}\leq t\leq0}\dashint_{B(\bar x,8c_i r)\cap \left[\upsilon(x,t)<k_n\right]}\varphi^{p}(x,t) \, d\mu(x).
\]
Let $\alpha>0$ be arbitrary and take $\alpha_1 \le \alpha /D_0$, where $D_0$ is the doubling constant. By the previous proposition, we can find $\delta$, $\sigma \in (0,1)$ such that either $Y_{n}\leq \alpha_1$ or $Y_{n+1}\leq\max\left\lbrace\alpha_1;\sigma Y_{n}\right\rbrace$. Iterating, we obtain
$$
Y_{n}\leq\max{\left\lbrace\alpha_1,\sigma^{n}Y_{0}\right\rbrace}, ~~\mbox{}~~ n=0,1,2,\dotso.
$$
Since $Y_{0}\le1$, we have
$$
Y_{n}\leq\max{\left\lbrace\alpha_1,\sigma^{n}\right\rbrace}.
$$
Take $n=n_{0}\in\mathbb{N}$ to be the smallest integer satisfying the condition
$\sigma^{n_{0}}\leq\alpha_1$; then
$$
Y_{n_{0}}\leq\max{\left\lbrace\alpha_1,\sigma^{n_{0}}\right\rbrace}\leq\alpha_1.
$$
On the other hand, we also obtain
$$
Y_{n_{0}}\geq\frac{\mu(\brc{x\in B(\bar x,4c_i r):\,\upsilon(x,t)<k_{n_0}})}{\mu(B(\bar x,8c_i r))},
$$
for all $t\in\left[-\left( r/4 \right)^{p},0\right]$. Therefore,
\begin{eqnarray*}
\mu\left(\brc{x\in B(\bar x,4c_i r):\,\upsilon(x,t)<k_{n_0}}\right) & \leq & \alpha_1 D_0\mu(B(\bar x,4c_i r))\\
& \leq & \alpha \mu(B(\bar x,4c_i r)), 
\end{eqnarray*}
for all $t\in\left[-\left( r/4 \right)^{p},0\right]$, and the lemma follows with $\delta^{*}=\delta^{n_{0}}$.
\end{proof}
\end{lemma}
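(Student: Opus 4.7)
The plan is to derive the conclusion directly from Proposition~\ref{p:aux} through an iteration argument on the sequence $(Y_n)$. Since $k_n=\mu_i^-+\delta^n\omega_i$, locating an index $n_0$ such that the sublevel set $\{v<k_{n_0}\}$ occupies only a small fraction of $B(\bar x,4c_ir)$ uniformly in $t$ will yield the statement with $\delta^{*}=\delta^{n_0}$.

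First, given $\alpha\in(0,1)$, I would fix the auxiliary threshold $\alpha_1:=\alpha/D_0$ and invoke Proposition~\ref{p:aux} with $\alpha_1$ in place of $\alpha$ to produce $\delta,\sigma\in(0,1)$, depending only on the data, $\alpha$, and $\lambda$. The dichotomy provided by that proposition, together with the trivial bound $Y_0\le 1$, yields by induction
\[
Y_n\le \max\{\alpha_1,\sigma^n\}
\]
for every $n\ge 0$. Choosing $n_0$ as the smallest integer with $\sigma^{n_0}\le\alpha_1$ then ensures $Y_{n_0}\le \alpha_1$, and this $n_0$ is determined only by the data, $\alpha$, and $\lambda$.

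To convert this supremum-in-time bound into a pointwise-in-$t$ measure estimate on the smaller ball, I would exploit the structure of the cutoff $\varphi=\varphi_1\varphi_2$ used to define $Y_n$: we have $\varphi_1\equiv 1$ on $B(\bar x,4c_ir)$ (since $\varphi_1$ is built from the distance to $\partial B(\bar x,8c_ir)$) and $\varphi_2\equiv 1$ on $[-(r/4)^p,0]$. Consequently, for every $t$ in that time interval,
\[
\frac{\mu\bigl(\{x\in B(\bar x,4c_ir):v(x,t)<k_{n_0}\}\bigr)}{\mu\bigl(B(\bar x,8c_ir)\bigr)}\le \dashint_{B(\bar x,8c_ir)\cap\{v(\cdot,t)<k_{n_0}\}}\!\!\varphi^p(\cdot,t)\,d\mu\le Y_{n_0}\le \alpha_1.
\]
Multiplying through and applying the doubling inequality $\mu(B(\bar x,8c_ir))\le D_0\,\mu(B(\bar x,4c_ir))$ (the radius ratio is $2$) gives the bound $\alpha_1 D_0\,\mu(B(\bar x,4c_ir))=\alpha\,\mu(B(\bar x,4c_ir))$, as required. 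Setting $\delta^{*}:=\delta^{n_0}$, which lies in $(0,2^{-(s+2)})$ since $\delta<2^{-(s+2)}$ by Proposition~\ref{p:aux}, closes the argument.

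The only real subtlety is careful bookkeeping of the doubling factor: $\alpha_1$ must be selected \emph{before} invoking Proposition~\ref{p:aux}, sufficiently smaller than $\alpha$ (by a factor $D_0$) so that the final measure estimate comes out with exactly the prescribed $\alpha$. All of the genuinely hard analytic work has already been carried out in the logarithmic energy estimate of Lemma~\ref{lem:intineq} and in Proposition~\ref{p:aux}; this lemma is then a clean geometric-iteration-plus-doubling consequence.
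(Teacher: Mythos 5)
Your proof is correct and follows essentially the same route as the paper's: apply Proposition~\ref{p:aux} with $\alpha_1=\alpha/D_0$, iterate the dichotomy to get $Y_{n_0}\le\alpha_1$, then use that $\varphi\equiv1$ on $B(\bar x,4c_ir)\times[-(r/4)^p,0]$ to convert the averaged bound into a measure estimate and pay the doubling factor $D_0$. The only cosmetic addition is your explicit remark that $\delta^*=\delta^{n_0}<2^{-(s+2)}$ because $n_0\ge1$ and $\delta<2^{-(s+2)}$, a detail the paper leaves implicit.
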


We are now in a position to prove the main result of this section.

\begin{proposition}\label{p:c2alt1reduction}
Assume that~\eqref{l:c2alt1res} holds for some $\bar{x}\in B(0,c_i r)$. There exists a positive number $\sigma \in (3/4,1)$, depending only on the data and $\lambda$, such that
$$
\upsilon(x,t)\geq \mu_{i}^{-}+(1-\sigma) \omega_i, \quad \mbox{for a.e.} \ (x,t)\in Q\left(\left(r/8\right)^{p},c_ i r\right).
$$
\begin{proof}
Let $\alpha \in (0,1)$ and let $\delta^*$ be as in Lemma~\ref{lem:c2alt1m}, chosen so that
$$
\mu\left(\left\lbrace x\in B(\bar x,4c_i r):\,\upsilon(x,t)\leq\mu_{i}^{-}+\delta^{*} \omega_{i}\right\rbrace\right)\leq \alpha\mu\left(B(\bar x,4c_i r)\right)
$$
holds for all time levels $t\in\left[-\left(r/4\right)^{p},0\right]$.
Our next goal is to apply Lemma~\ref{main_lemma} (see also Remark~\ref{main_lemma_remark}) for 
the choices
\[
\gamma_1^- = 4 c_i = 4\left(\frac{\omega_i}{2^\lambda \mu_i^-} \right)^{(p-2)/p}
, \qquad   \gamma_2^- = 4^{-p} (\delta^*)^{2-p}, \qquad \eps_- =\delta^*
\]
and 
\[
k_n^- = \mu_i^- + \frac{\eps_- \omega_i}{2} \left(1 + \frac1{2^{n}}  \right).
\]
Observe that, due to \eqref{eq:bar est}, the term
\[
\frac{(\gamma_1^-)^p}{\gamma_2^-}  \left(\frac{k_n^-}{\eps_- \omega_i}\right)^{p-2}
= 16^p\left(\delta^* \frac{\omega_i}{2^\lambda \mu_i^-} \frac{k_n^-}{\delta^* \omega_i}  \right)^{p-2}
= 16^p \left( \frac{k_n^-}{2^\lambda \mu_i^-} \right)^{p-2}
\]
is bounded above and below, for a constant depending only on the data and $\lambda$. In particular, the constant is independent of $\delta^*$, a crucial fact in order to avoid vicious circles.
Letting
\[
t^* \in \left(- 4^{-p}(1-(\delta^* )^{2-p})r^p,0\right],
\]
Corollary~\ref{cor:iteration} and Lemma \ref{lem:c2alt1m} imply, by choosing $\alpha \equiv \alpha(data,\lambda)$ small enough, that
\[
v(x,t)\geq \mu_i^- + \frac{\delta^* \omega_i}{4}, \quad \textrm{for a.e} \quad  (x,t) \in B(\bar x, 2c_i r)\times (t^*-\gamma_2^- (r/2)^p,t^*).
\]
Observe carefully that the choice of $\alpha$ also fixes $\delta^*$, and hence $\delta^*$ depends only on the data and $\lambda$.
Since $t^*$ above is arbitrary, the result now follows  with $\sigma = 1 - \delta^*/4$.
\end{proof}
\end{proposition}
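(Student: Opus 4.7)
The plan is to combine the measure estimate provided by Lemma~\ref{lem:c2alt1m} with the De Giorgi type iteration coming from Corollary~\ref{cor:iteration} (applied in the form available for the transformed equation~\eqref{substituted_equation} via Remark~\ref{main_lemma_remark}). Fix an $\alpha \in (0,1)$ to be determined and let $\delta^* = \delta^*(\mathrm{data},\alpha,\lambda) \in (0,2^{-(s+2)})$ be the number supplied by Lemma~\ref{lem:c2alt1m}, so that
\[
\mu\bigl(\{x \in B(\bar x, 4c_i r): v(x,t) \leq \mu_i^- + \delta^* \omega_i\}\bigr) \leq \alpha\, \mu\bigl(B(\bar x, 4c_i r)\bigr)
\]
for every $t \in [-(r/4)^p,0]$. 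This is the a priori smallness on the measure of the ``bad set'' at each time slice that feeds the iteration.

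Next, I would set up the parameters of Lemma~\ref{main_lemma} as
\[
\gamma_1^- = 4 c_i, \qquad \gamma_2^- = 4^{-p}(\delta^*)^{2-p}, \qquad \eps_- = \delta^*,
\]
and take the standard descending levels
\[
k_n^- = \mu_i^- + \frac{\eps_- \omega_i}{2}\!\left(1 + \frac{1}{2^n}\right).
\]
The balance condition~\eqref{eq:iteration cor ass} of Corollary~\ref{cor:iteration} then reduces, after a direct computation and using $c_i^p = (\omega_i / (2^\lambda \mu_i^-))^{p-2}$, to controlling
\[
\frac{(\gamma_1^-)^p}{\gamma_2^-}\left(\frac{k_n^-}{\eps_- \omega_i}\right)^{p-2} = 16^p\left(\frac{k_n^-}{2^\lambda \mu_i^-}\right)^{p-2},
\]
which is bounded above and below by constants depending only on the data and $\lambda$ thanks to $\mu_i^- \leq k_n^- \leq \mu_i^+ \leq C_1 \mu_i^-$ from~\eqref{eq:bar est}. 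The key point is that this double bound is \emph{independent} of $\delta^*$, so the threshold $\alpha_0^-$ produced by Corollary~\ref{cor:iteration} depends only on data and $\lambda$.

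With this, for any $t^* \in \bigl(-4^{-p}(1-(\delta^*)^{2-p}) r^p,\, 0\bigr]$, I would consider the cylinder
\[
Q_{\bar x,t^*}((\gamma_1^-\, r/\text{something})^p,\gamma_1^- r) \supset B(\bar x, 4 c_i r) \times (t^* - \gamma_2^- r^p, t^*),
\]
observe that Lemma~\ref{lem:c2alt1m} applied slice by slice yields $\nu(A_0^-)/\nu(Q_0^-) \leq \alpha$ on the bottom level $k_0^- = \mu_i^- + \eps_-\omega_i$, and then choose $\alpha = \alpha(\mathrm{data},\lambda)$ small enough so that $\alpha \leq \alpha_0^-$. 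Invoking Corollary~\ref{cor:iteration} gives
\[
v(x,t) \geq \mu_i^- + \frac{\delta^* \omega_i}{4} \qquad \text{for a.e.}\ (x,t) \in B(\bar x, 2c_i r) \times \bigl(t^* - \gamma_2^-(r/2)^p, t^*\bigr).
\]
Since $\bar x \in B(0,c_i r)$, the ball $B(\bar x, 2c_i r)$ contains $B(0,c_i r)$, and letting $t^*$ range over the allowed interval sweeps out the full time range $(-(r/8)^p, 0]$ (for $\delta^*$ sufficiently small, the left endpoint $-4^{-p}(1-(\delta^*)^{2-p}) r^p$ lies below $-(r/8)^p$; this is easy to arrange by shrinking $\alpha$ further, since $\delta^*$ can only decrease). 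Setting $\sigma = 1 - \delta^*/4 \in (3/4,1)$ (after possibly shrinking $\delta^*$ once more so that $\sigma > 3/4$) completes the proof.

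The main obstacle, conceptually, is avoiding a vicious circle: the iteration constants in Lemma~\ref{main_lemma} depend through the $k_n^-$ on the very $\delta^*$ coming out of Lemma~\ref{lem:c2alt1m}. The trick, as indicated above, is that under the choice $\gamma_2^- = 4^{-p}(\delta^*)^{2-p}$ and $\eps_-=\delta^*$, the $\delta^*$ factors cancel in~\eqref{eq:iteration cor ass} and the ratio is controlled purely by $\mu_i^+ / \mu_i^-$, which is bounded by~\eqref{eq:bar est}. This lets the smallness threshold $\alpha_0^-$ be fixed first (depending only on data and $\lambda$), then $\alpha$ is chosen accordingly, and only after that is $\delta^*$ determined. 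Aside from this ordering issue, the argument is a fairly standard application of the iteration machinery to the subsolution $(v-k_n^-)_-$.
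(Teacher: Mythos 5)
Your proposal is correct and follows essentially the same approach as the paper: the same choices of $\gamma_1^-, \gamma_2^-, \eps_-$, the same levels $k_n^-$, the same verification that the ratio in \eqref{eq:iteration cor ass} is independent of $\delta^*$, and the same sweeping over $t^*$ to cover the full time interval. The only (immaterial) difference is that you hedge about possibly shrinking $\delta^*$ further to cover $(-(r/8)^p,0]$ and to get $\sigma > 3/4$, whereas a direct computation shows both hold automatically once $\delta^* < 1$ and $1<p\leq 2$.
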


As a consequence, we may rephrase the conclusion of the first alternative in the following way.
Here we shall also apply ~\eqref{eq:Case II inclusion} and return to the bar in the notation.
\begin{corollary}\label{c:c2alt1reduction} \label{cor:Case II conclusion 1}
Assume~\eqref{cor:Case II test} is in force and suppose that~\eqref{l:c2alt1} holds for some $\bar x~\in~B(0,c_i r)$. Then there exist $\delta_{II},\sigma_{II} \in (0,1)$, both depending only on the data and $\lambda$, such that
$$
\essosc_{Q\left((\delta_{II} R_{i+1})^{p},c_{i+1} \delta_{II} R_{i+1} \right)} \upsilon\leq \sigma_{II} \bar{\omega}_{i}.
$$
\end{corollary}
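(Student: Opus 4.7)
This corollary is essentially a bookkeeping reformulation of Proposition~\ref{p:c2alt1reduction}, restoring the bars in the notation and relocating the conclusion from the cylinder $Q((r/8)^p, c_i r)$, on which positivity has been established, to the smaller cylinder $Q((\delta_{II} R_{i+1})^p, c_{i+1} \delta_{II} R_{i+1})$ demanded by the induction scheme of Section~3.

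The first step is to read off Proposition~\ref{p:c2alt1reduction}: there exists $\sigma \in (3/4, 1)$, depending only on the data and $\lambda$, such that
\[
\upsilon(x,t) \geq \bar\mu_i^- + (1-\sigma)\bar\omega_i \qquad \text{for a.e.\ } (x,t) \in Q\bigl((r/8)^p,\, c_i r\bigr).
\]
Combined with the trivial upper bound $\upsilon \leq \bar\mu_i^+$ inherited from $Q^i$, this yields
\[
\essosc_{Q((r/8)^p,\, c_i r)} \upsilon \;\leq\; \bar\mu_i^+ - \bar\mu_i^- - (1-\sigma)\bar\omega_i \;=\; \sigma\, \bar\omega_i.
\]

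The second step is to choose $\delta_{II} \in (0,1)$ so small that
\[
Q\bigl((\delta_{II} R_{i+1})^p,\, c_{i+1} \delta_{II} R_{i+1}\bigr) \;\subset\; Q\bigl((r/8)^p,\, c_i r\bigr).
\]
Since both cylinders are centered at the origin, this amounts to verifying the two numerical inequalities $c_{i+1} \delta_{II} R_{i+1} \leq c_i r$ and $\delta_{II} R_{i+1} \leq r/8$. Using $R_{i+1} = \delta R_i$ together with $r = R_i/10$ for $i > i_0$ (respectively $r = \tfrac{1}{80}(C_2 2^\lambda)^{(p-2)/p} R_{i_0}$ if $i = i_0$) and the comparison $c_{i+1}/c_i \leq C_3^{(2-p)/p}$ established in the derivation of~\eqref{eq:Case II inclusion} (and which survives the redefinition $c^{\prime}_i = c_{i-1}$), both inequalities reduce to an upper bound on $\delta_{II}\, \delta$ by a constant that depends only on the data and $\lambda$. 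Since $\delta$ itself will be chosen depending only on the data and $\lambda$, one may fix $\delta_{II}$ with the same dependencies. Setting $\sigma_{II} := \sigma \in (3/4,1)$ concludes the argument.

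\textbf{Expected obstacle.} Essentially none of analytic nature: all the heavy lifting was performed in Proposition~\ref{p:c2alt1reduction}, via the logarithmic estimate of Lemma~\ref{lem:intineq}, the expansion of positivity of Lemma~\ref{lem:c2alt1m}, and the De~Giorgi-type iteration of Corollary~\ref{cor:iteration}. The only care required is the careful bookkeeping of the two cases $i = i_0$ versus $i > i_0$ in the definition of $r$ and in the redefined sequence $\{c_i\}$, and making sure the constants hidden in the inclusion remain under the umbrella of ``data and $\lambda$''.
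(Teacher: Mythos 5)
Your proof is correct and is essentially the same bookkeeping the paper intends: the heavy analysis is already in Proposition~\ref{p:c2alt1reduction}, and the corollary follows by restoring the bars and using~\eqref{eq:Case II inclusion} (i.e., the uniform bound on $c_{i+1}/c_i$) together with the definition of $r$ to nest $Q\bigl((\delta_{II} R_{i+1})^p, c_{i+1}\delta_{II} R_{i+1}\bigr)$ inside $Q\bigl((r/8)^p, c_i r\bigr)$. The only loose phrase is the appeal to ``$\delta$ will be chosen depending only on the data'': since $\delta<1$ always, it suffices to pick $\delta_{II}$ itself small enough in terms of data and $\lambda$ so that $\delta_{II}\delta\leq\delta_{II}$ clears both numerical thresholds, with no circularity with the later choice $\delta=\min\{\delta_I,\delta_{II},\delta_{III}\}$.
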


\subsection{Analysis of the second alternative}

Now we analyze the second alternative. Assume ~\eqref{l:c2alt2} holds for all cylinders $Q_{\bar{x},0}\left(r^{p},d_{i}r\right)$.  Since
$$
\mu_{i}^{+}-\frac{\omega_{i}}{2}=\mu_{i}^{-}+\frac{\omega_{i}}{2},
$$
we can rephrase~\eqref{l:c2alt2} as
\be\label{l:c2alt2new1}
\frac{\nu\left((x,t)\in Q_{\bar{x},0}\left(r^{p},d_{i}r\right):\,v(x,t)>\mu^{+}_{i}-\omega_{i}/2\right)}{\nu( Q\left( r^{p},d_{i} r\right))}
\leq(1-\alpha_0),
\ee
for all cylinders $Q_{\bar{x},0}\left(r^{p},d_{i}r\right)$ such that $\bar{x} \in B(0,c_i r)$.
In particular, we deduce that there exists a time level $t^{*} \equiv t^*(\bar{x})$, $t^* \in\left(-r^{p},-(\alpha_0/2)r^{p}\right)$, such that
\be\label{l:c2alt2ineq}
\frac{\mu\left(\left\lbrace x\in B(\bar{x},d_i r):\,v(x,t^{*})>\mu_{i}^{+}-\omega_{i}/2\right\}\right)}{\mu(B(\bar{x},d_i r))}
\leq\left(\frac{1-\alpha_0}{1-\alpha_0/2}\right).
\ee
In fact, if~\eqref{l:c2alt2ineq} is violated for all $t\in\left(-r^{p},-(\alpha_0/2)r^{p}\right)$, we would get
\begin{align*}
&\nu\left( \left\lbrace (x,t) \in Q_{\bar{x},0}\left(r^{p},d_{i} r\right):\, v(x,t)>\mu^{+}_{i}-\omega_{i}/2\right\}\right) \\
&\qquad \geq \int_{-r^{p}}^{-(\alpha_0/2)r^{p}} \! \mu\left(\left\lbrace x \in B(\bar{x},d_i r) \ :\ v(x,t)>\mu^{+}_{i}-\omega_{i}/2\right\}\right) \, dt\\
& \qquad > (1-\alpha_0)\nu(Q\left(r^{p},d_{i} r \right)),
\end{align*}
\noindent which contradicts~\eqref{l:c2alt2new1}.

The next lemma asserts that a similar property still holds for all time levels in an interval up to the origin.
We shall now fix the number $s$ in the definition of $d_i$.

\begin{lemma}\label{logarithmic_bound}
There exists $s$, depending only upon the data, such that
\[
\frac{\mu\left(\left\{x\in B(\bar{x},d_i r) \ : \ v(x,t) > \mu_{i}^+ - 2^{-s} \omega_{i} \right\}\right)}{\mu(B(\bar{x},d_i r))}
\le\left(1-\frac{\alpha_0 }{4} \right),
\]
for all  $\bar{x} \in B(0,c_i r)$ and for almost every $t\in\left(-(\alpha_0/2)r^{p},0\right)$.
\begin{proof}
Let
\[
c=\frac{\omega_{i}}{2^{s}}, \qquad k = \mu_{i}^+-\frac{\omega_{i}}{2}
, \qquad
H_k^+= \frac{\omega_{i}}{2} , 
\]
and set $Q:=Q(r^p,d_i r)$ and $B:= B(\bar{x},d_i r)$. Our aim is to use Lemma~\ref{Harnack_logarithm} to forward the information in time. 

Recall the definition
\[
\psi_+(v)=\Psi(H_k^+,(v{-}k)_+,c)=\ln^+\l(\frac{H_k^+}{c+H_k^+-(v{-}k)_+}\r).
\]
On the one hand, since $(v-k)_+ \leq \omega_i/2 \equiv H_k^+$, we have
\[
\psi_+(v)\le \ln\l(\frac{2^{-1} \omega_{i}}{2^{-s} \omega_{i}} \r) = (s-1)\ln 2;
\]
on the other hand, in the set
\[
\{v> \mu_{i}^+- 2^{-s} \omega_{i}\},
\]
we get
\[
\psi_+(v)
\ge 
\ln\l(\frac{2^{-1} \omega_{i}}{2^{-s}\omega_{i}+2^{-s} \omega_{i}}\r) =(s-2) \ln 2.
\]
The last estimate we need is
\[
|\psi_+(v)'|^{2-p}\le \left( \frac{1}{c} \right)^{2-p} = \l(\frac{\omega_{i}}{2^{s}}\r)^{p-2} = (\mu_i^-)^{p-2} d_i^p .
\]

Now let $\varphi\in C_0^\infty(B)$ be a cutoff function which is independent of time
and has the properties $0\le\varphi\le 1$, $\varphi=1$ in $(1-\delta)B$ and
$
|\nabla \varphi|\le (\delta d_i r)^{-1},
$
where $0<\delta<1$ is to be determined later.

Apply Lemma~\ref{Harnack_logarithm} with these choices to conclude
\begin{equation}\notag
\begin{split}
&(s-2)^2 (\ln2)^2 \mu(\{x\in (1-\delta)B :  v(x,t)> \mu_{i}^+- 2^{-s} \omega_{i} \}) \\
&\qquad \le\esssup_{t^*<t<0}\int_{B} \psi_+^2(v)(x,t)\varphi^p(x) \, d\mu \\
&\qquad \le \int_{B}\psi_+^2(v)(x,t^*)\varphi^p(x) \, d\mu \\
&\qquad \quad +C \int_{t^*}^{0}\int_{B}v^{2-p}\psi_+(v)|(\psi_+)^\prime(v)|^{2-p}|\nabla  \varphi|^p \, d\mu\, dt \\
&\qquad \le (s-1)^2 \ln^2(2) \frac{1-\alpha_0}{1-(\alpha_0/2)}\mu(B)+C\frac{(s-1)\ln 2}{\delta^p} \mu(B),
\end{split}
\end{equation}
for almost every $t\in(t^*(\bar x),0)$, where $C$ depends only upon the data. Observe that in the third inequality we have used~\eqref{l:c2alt2ineq} and~\eqref{eq:bar est}.

Now, by the annular decay property \eqref{annular_decay}, we have
\begin{align*}
& \mu(\{x\in B  \ :\ v(x,t)> \mu_{i}^+- 2^{-s} \omega_{i} \}) \\
&\qquad \le\mu(B \setminus (1-\delta) B)+\mu(\{x \in (1-\delta)B) \ : \ v(x,t)> \mu_{i}^+- 2^{-s} \omega_{i}\})\\
&\qquad \le C\delta^\alpha\mu(B)+\mu(\{x \in (1-\delta)B) \ : \ v(x,t)> \mu_{i}^+- 2^{-s} \omega_{i}\}).
\end{align*}
For the first term, we choose $\delta$ small enough so that
\[
C\delta^\alpha<\frac{\alpha_0}{24}
\]
and for the second term we use the previous estimate. Indeed, by choosing $s$ large enough so that
\[
\frac{1-\alpha_0}{1-(\alpha_0/2)} \, \frac{(s-1)^2}{(s-2)^2}
\le 1-\frac{\alpha_0}{3}
\]
and
\[
\frac{C(s-1)}{(\ln 2)\delta^p(s-2)^2} 
\le\frac{\alpha_0}{24}
\]
hold,  we get the claim for almost every $t\in(-(\alpha_0/2)r^{p},0) \subset (t^*(\bar x),0)$.
\end{proof}
\end{lemma}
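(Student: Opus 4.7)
\textbf{Proof plan for Lemma \ref{logarithmic_bound}.} The idea is to apply the logarithmic estimate of Lemma \ref{Harnack_logarithm} with a judicious choice of the parameters $H_k^+$, $k$ and $c$ so that the function $\psi_+(v)$ acts as a smooth indicator detecting whether $v$ is close to $\mu_i^+$. The measure information at the initial time $t^*$, supplied by \eqref{l:c2alt2ineq}, then gets propagated forward to all $t \in (-(\alpha_0/2)r^p, 0)$, provided that $s$ is chosen large enough (depending only on the data) to beat the factor $(1-\alpha_0)/(1-\alpha_0/2)$ coming from the initial-time input.

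Concretely, I would set $k = \mu_i^+ - \omega_i/2$, $H_k^+ = \omega_i/2$ and $c = 2^{-s}\omega_i$. Two elementary estimates then follow: $\psi_+(v) \leq (s-1)\ln 2$ holds pointwise (since $(v-k)_+ \leq H_k^+$), while on the set $\{v > \mu_i^+ - 2^{-s}\omega_i\}$ we have $\psi_+(v) \geq (s-2)\ln 2$. A direct computation also gives $|\psi_+'(v)|^{2-p} \leq c^{-(2-p)} = (\mu_i^-)^{p-2} d_i^p$, which is the reason for the particular choice of $d_i$. Pick a time-independent cutoff $\varphi$ supported in $B = B(\bar x, d_i r)$, equal to one on $(1-\delta)B$, with $|\nabla \varphi| \leq (\delta d_i r)^{-1}$. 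Apply Lemma \ref{Harnack_logarithm} over $B \times (t^*,0)$, where $t^* \equiv t^*(\bar x) \in (-r^p, -(\alpha_0/2)r^p)$ is the good time supplied by \eqref{l:c2alt2ineq}. The left-hand side dominates $(s-2)^2(\ln 2)^2 \mu(\{v(\cdot,t) > \mu_i^+ - 2^{-s}\omega_i\} \cap (1-\delta)B)$; the initial datum term is bounded by $(s-1)^2(\ln 2)^2 \cdot \frac{1-\alpha_0}{1-\alpha_0/2}\mu(B)$ using \eqref{l:c2alt2ineq} and the fact that $\psi_+(v(\cdot,t^*)) = 0$ outside $\{v(\cdot,t^*) > k\}$; the remainder term is of size $C(s-1)\delta^{-p}\mu(B)$ after absorbing $v^{2-p}$ using \eqref{eq:bar est} and noting that $(t^*-0)\cdot(\mu_i^-)^{p-2}d_i^p/(\delta d_i r)^p$ compresses to that scaling.

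The final step is to transfer the bound from $(1-\delta)B$ back to the full ball $B$ using the annular decay property \eqref{annular_decay}, which contributes at most $C\delta^\alpha \mu(B)$. Then one chooses the parameters in the right order: first take $\delta$ small enough that $C\delta^\alpha < \alpha_0/24$; then take $s$ large enough (depending on $\delta$ and the data) so that the ratio $(s-1)^2/(s-2)^2$ is close enough to $1$ to push the factor $(1-\alpha_0)/(1-\alpha_0/2)$ below $1 - \alpha_0/3$ after dividing by $(s-2)^2(\ln 2)^2$, and simultaneously $C(s-1)/((\ln 2)(s-2)^2 \delta^p) < \alpha_0/24$. Summing the three contributions yields the desired upper bound $(1-\alpha_0/4)\mu(B)$.

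The main obstacle is calibrating the dependence on $s$: the extracted bound is essentially $\frac{(s-1)^2}{(s-2)^2}\cdot \frac{1-\alpha_0}{1-\alpha_0/2} + \frac{C(s-1)}{(s-2)^2 \delta^p}$, and one must observe that the first term tends to $\frac{1-\alpha_0}{1-\alpha_0/2} < 1$ while the second tends to $0$ as $s \to \infty$. This quadratic/linear growth gap in $s$ is exactly what makes the argument close, provided $\delta$ is fixed first from the annular decay. A minor subtlety is ensuring $v^{2-p}$ in the right-hand side of Lemma \ref{Harnack_logarithm} can be uniformly estimated by $(\mu_i^+)^{2-p}$ and then traded against $(\mu_i^-)^{p-2}$ via \eqref{eq:bar est}, which is what allows the final constant to depend only on the data.
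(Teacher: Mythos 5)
Your proposal is correct and follows essentially the same strategy as the paper's proof: same choices of $k$, $H_k^+$, $c$, same bounds on $\psi_+$ and its derivative, same time-independent cutoff, the same application of Lemma \ref{Harnack_logarithm}, and the same annular-decay expansion with $\delta$ fixed before $s$. You also correctly identify the decisive observation that the coefficient $(s-1)^2/(s-2)^2 \to 1$ while the gradient term decays like $1/s$, which is exactly the mechanism the paper uses to close the argument.
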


The information of Lemma~\ref{logarithmic_bound} will be used to show that in a small cylinder about the origin, the solution $v$ is strictly bounded above by
$\mu_{i}^{+}-2^{-m}\omega_i$, for some $m>s$.
We shall also fix $\lambda$, which enters the definition of $c_i$, determining in this process the size of $Q\left(r^{p},c_i r\right)$.

To simplify the notation, from now on we denote $\beta=\frac{\alpha_0}{2}$.

\begin{lemma}\label{choosing_lambda}
For every $\alpha_1\in(0,1)$, there exists $m\geq s$,
depending only on the data and $\alpha_1$,
such that
\[
\frac{\nu\left(\{ (x,t) \in Q_{\bar{x},0}(\beta r^p, d_i r) \ : \ v(x,t) > \mu_{i}^+ - 2^{-m} \omega_{i}  \} \right) }{\nu(Q_{\bar x,0}(\beta r^p,d_i r))}\leq \alpha_1,
\]
for all $\bar x \in B(0,c_i r)$.

\begin{proof}
Let
\[
E_n (t) = \{ x \in B(\bar{x},d_i r) \ : \ v(x,t) > \mu_{i}^+ - 2^{-n} \omega_{i}  \}
\]
and
\[
E_n  = \{ (x,t) \in Q_{\bar{x},0}(\beta r^p, d_i r) \ : \ v(x,t) > \mu_{i}^+ - 2^{-n} \omega_{i}  \}.
\]
Denote
\[
h=\mu_{i}^+- 2^{-(n+1)} \omega_{i}
\]
and
\[
k=\mu_{i}^+- 2^{-n} \omega_{i},
\]
where $n \geq s$ will be chosen large. Here $s$ is as in Lemma~\ref{logarithmic_bound}.
Denote $\sigma B=B(\bar x,\sigma d_i r)$ and $\sigma Q = \sigma B \times (-\beta (\sigma r)^p,0)$, $\sigma>0$.
Let also
\[
w=
\begin{cases}
h-k,\quad &v\ge h, \\
v-k,\quad &k<v<h, \\
0, \quad &v\le k.
\end{cases}
\]
By Lemma~\ref{logarithmic_bound} and the fact that $n\ge s$, we have
\begin{align*}
\mu(x\in B :  w(x,t)=0\})&=\mu(\{x\in B  :  v(x,t)\le k\})
\ge\frac{\alpha_0}{4}\mu(B),
\end{align*}
for almost every $t\in\big(-\beta r^p,0\big)$.
Thus, for almost every $t\in\big(-\beta r^p,0\big)$, we obtain
\[
w_{B}(t)=\dashint_{B \times\{t\}} w \, d\mu \le \l(1-\frac{\alpha_0}{4}\r)(h-k)
\]
and, consequently,
\[
h-k-w_{B}(t) \ge \frac{\alpha_0}{4}(h-k).
\]

Using the $(q,q)$-Poincar\'e inequality for some $q<p$ (see~\eqref{poincare} and the subsequent remarks), yields
\begin{align*}
(h-k)^q&\mu(E_{n+1}(t))\le\l(\frac{4}{\alpha_0}\r)^q\int_{B\times\{t\}}|w-w_{B}(t)|^q\, d\mu \\
&\le
C(d_i r)^q\int_{B \times\{t\}} |\nabla w|^q \, d\mu = C(d_i r)^q\int_{E_n(t)\setminus E_{n+1}(t)} |\nabla v|^q \, d\mu,
\end{align*}
for almost every $t\in\big(-\beta r^p,0\big)$. The constant $(4/\alpha_0)^q$ above was absorbed into the constant $C$. Now we integrate the above inequality over time to get
\[
(h-k)^q\nu(E_{n+1})\le C (d_ir)^q\int_{E_n\setminus E_{n+1}} |\nabla v|^q \, d\nu.
\]
Next, we introduce a cutoff function $\varphi \in C^\infty(2Q)$ such that
$0\le\varphi\le 1$, $\varphi=1$ in $Q$, $\vp$ vanishes on the parabolic boundary of $2Q$,
and
\[
|\nabla \varphi|\le\frac{C}{d_i r} \quad\text{and}\quad \left(\frac{\partial \varphi}{\partial t}\right)_+\le \frac{C}{\beta r^p}.
\]
Now, H\"older's inequality gives
\begin{align*}
& (h-k)^q\nu(E_{n+1})
\\ & \qquad  \le C(d_ir)^q\l(\int_{E_{n}\setminus E_{n+1}} |\nabla v|^p \, d\nu\r)^{q/p}\nu(E_{n}\setminus E_{n+1})^{1-q/p}
\\ & \qquad \le C\l((d_ir)^p \int_{Q(2\beta r^p,2d_i r)} |\nabla (v{-}k)_+|^p\varphi^p \, d\nu\r)^{q/p}\nu(E_{n}\setminus E_{n+1})^{1-q/p}.
\end{align*}
Since $n \ge s$,
the first factor on the right hand side can be estimated by Lemma~\ref{substituted_energy} as
\begin{equation}\nonumber
\begin{split}
&\int_{2Q} |\nabla (v{-}k)_+|^p\varphi^p \, d\nu \\
&\le C\int_{2 Q}(v{-}k)_+^p|\nabla\varphi|^p\, d\nu +C(\mu_{i}^-)^{p-2}\int_{2Q} (v{-}k)_+^2 \left(\frac{\partial \varphi}{\partial t}\right)_+\, d\nu \\ 
& \le C\l(\frac{1}{(d_i r)^p}\l(\frac{\omega_{i}}{2^n}\r)^p+\frac{(\mu_{i}^-)^{p-2}}{\beta r^p}\l(\frac{\omega_{i}}{2^n}\r)^2\r)\nu\left(2Q\right)
\\ &\le \frac{C}{(d_i r)^p}\left(\frac{\omega_{i}}{2^n}\right)^p\nu\left(Q\right).
\end{split}
\end{equation}
In the last inequality we have used the doubling property of the measure $\nu$, together with the estimate
\[
(\mu_{i}^-)^{p-2}\l(\frac{\omega_{i}}{2^n}\r)^{2}\le \l(\frac{\omega_{i}}{2^{s}\mu_{i}^-}\r)^{2-p}\l(\frac{\omega_{i}}{2^{n}}\r)^{p}= \frac{1}{d_i^p}\l(\frac{\omega_{i}}{2^{n}}\r)^{p}.
\]
We obtain
\[
\l(\frac{\omega_{i}}{2^{n+1}}\r)^q
\nu(E_{n+1})\le C
\l(\frac{\omega_{i}}{2^n}\r)^q
\nu\left(Q\right)^{q/p}\nu(E_{n}\setminus E_{n+1})^{1-q/p}.
\]
Finally, summing $n$ over $s,\dots,m-1$, gives
\[
(m-s)\nu(E_{m})^{p/(p-q)}
\le C\nu\left(Q\right)^{q/(p-q)}\nu\left(Q\right) = C\nu\left(Q\right)^{p/(p-q)},
\]
and hence
\[
\nu(E_{m})\le \frac{C}{(m-s)^{(p-q)/p}}\nu\left(Q\right).
\]
Choosing $m$ large enough finishes the proof.
\end{proof}
\end{lemma}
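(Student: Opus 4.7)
The plan is to iterate a De Giorgi-type estimate across the logarithmic levels $k_n := \mu_i^+ - 2^{-n}\omega_i$, for $n \geq s$, and show that the mass of the super-level sets $E_n := \{v > k_n\} \cap Q_{\bar x,0}(\beta r^p, d_i r)$ is distributed among the ``transition shells'' $E_n \setminus E_{n+1}$ in a summable way. Since these shells are disjoint and sit inside a fixed cylinder, $\nu(E_m)$ must decay as $m \to \infty$.

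The first ingredient is a slice-wise lower bound on $\{v \le k_n\}$. Since $n \ge s$, Lemma~\ref{logarithmic_bound} gives, for a.e.\ $t \in (-\beta r^p, 0)$,
\[
\mu\bigl(\{x \in B(\bar x, d_i r) : v(x,t) \le k_n\}\bigr) \ge \frac{\alpha_0}{4}\mu\bigl(B(\bar x, d_i r)\bigr).
\]
This density bound lets me apply the $(q,q)$-Poincar\'e inequality (valid for some $q < p$ by Keith--Zhong) on each time slice to the truncation $w := \min\{(v-k_n)_+, k_{n+1}-k_n\}$: its slice-mean is bounded away from the top level $k_{n+1}-k_n$, so the Poincar\'e oscillation controls the higher super-level. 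After integrating in time and applying H\"older's inequality, this yields
\[
(k_{n+1}-k_n)^q\,\nu(E_{n+1}) \le C (d_i r)^q \Bigl(\int_{E_n \setminus E_{n+1}}|\nabla v|^p \, d\nu \Bigr)^{q/p} \nu(E_n \setminus E_{n+1})^{1-q/p}.
\]

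To handle the gradient integral I apply the energy estimate of Lemma~\ref{substituted_energy} to $(v-k_n)_+$, with a cutoff $\varphi$ supported in a slightly enlarged cylinder $2Q$, equal to one on $Q := Q_{\bar x,0}(\beta r^p, d_i r)$, and satisfying $|\nabla \varphi| \lesssim 1/(d_i r)$, $(\partial_t \varphi)_+ \lesssim 1/(\beta r^p)$. The crucial cancellation, forced by the intrinsic choice of $d_i$, is
\[
(\mu_i^-)^{p-2}\Bigl(\frac{\omega_i}{2^n}\Bigr)^2 \le \Bigl(\frac{\omega_i}{2^s \mu_i^-}\Bigr)^{2-p}\Bigl(\frac{\omega_i}{2^n}\Bigr)^p = \frac{1}{d_i^p}\Bigl(\frac{\omega_i}{2^n}\Bigr)^p,
\]
so both the spatial and the parabolic terms in the energy estimate scale like $(\omega_i/2^n)^p/(d_i r)^p$ times $\nu(Q)$, and the doubling property absorbs $\nu(2Q)/\nu(Q)$. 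Plugging this back into the Poincar\'e inequality and simplifying the factor $(k_{n+1}-k_n)^q = (\omega_i/2^{n+1})^q$ gives
\[
\nu(E_{n+1})^{p/(p-q)} \le C\, \nu(Q)^{q/(p-q)} \nu(E_n \setminus E_{n+1}).
\]
Summing over $n = s, \dots, m-1$, the shells telescope, yielding $(m-s)\,\nu(E_m)^{p/(p-q)} \le C\, \nu(Q)^{p/(p-q)}$ and hence $\nu(E_m)/\nu(Q) \le C/(m-s)^{(p-q)/p}$. Choosing $m$ large, depending only on the data and $\alpha_1$, completes the proof.

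The main obstacle is the step that forces the two contributions in the energy estimate onto a common scale: the parabolic term carries $(\mu_i^-)^{p-2}/(\beta r^p)$ rather than $1/(d_i r)^p$, and reconciling them is the whole point of the intrinsic scaling $d_i^p = (\omega_i/(2^s \mu_i^-))^{p-2}$. Since $\beta = \alpha_0/2$ is already a fixed constant of the data and $s$ has been frozen by Lemma~\ref{logarithmic_bound}, the free parameter $m$ can now be tuned without circularity, which is what makes the iteration admissible.
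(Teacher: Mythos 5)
Your proposal is correct and follows essentially the same route as the paper: slice-wise density bound from Lemma~\ref{logarithmic_bound}, the $(q,q)$-Poincar\'e inequality applied to the truncation $w$, the energy estimate of Lemma~\ref{substituted_energy} with the intrinsic scaling identity $d_i^p=(\omega_i/(2^s\mu_i^-))^{p-2}$ to put the spatial and parabolic terms on a common scale, and the De Giorgi summation over disjoint shells $E_n\setminus E_{n+1}$. The only thing worth being more explicit about is the final telescoping step, where one needs the monotonicity $\nu(E_m)\le\nu(E_{n+1})$ for all $n+1\le m$ together with the disjointness of the shells inside $Q$, but this is exactly what the paper does as well.
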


We now cover $Q(\beta r^p, c_i r)$ with smaller cylinders $Q\left( \beta r^p,d_i r\right)$ to obtain the next result.

\begin{corollary} \label{cor:second alt}
For every $\alpha_2 \in (0,1)$, there exists $m>1$, depending only on the data and $\alpha_2$, such that
\[
\frac{\nu\left(\{ (x,t) \in Q\l(\beta r^p,c_i r\r) \ : \ v(x,t) >
\mu_{i}^+- 2^{-m}\omega_{i}  \}\right)}{\nu\left(Q(\beta r^p,c_ir)\right)} \le \alpha_2 .
\]
\begin{proof}
Take a dense subset $\{x_j\}$ of $B(0,c_i r)$ and let $B_j := B(x_j,d_i r/5)$. By Vitali's covering theorem, we find disjoint balls $\{B_{j_k}\}_{k\in \N}$
such that 
$$B(0,c_i r) \subset \bigcup_{k \in \N} 5B_{j_k}.$$
Let $Q_k := 5B_{j_k} \times (-\beta r^p,0)$. By Lemma~\ref{choosing_lambda}, with $\alpha_1 = D_0^{-4} \alpha_2$, we have
\[
\nu\left(\{ (x,t) \in Q_k  :  v(x,t) > \mu_{i}^+ - 2^{-m} \omega_{i}  \} \right) \leq D_0^{-4} \alpha_2  \nu(Q_k),
\]
for a suitably large $m$, independent of $k$. It follows that
\begin{align*}
& \nu\left(\{ (x,t) \in Q\l(\beta r^p,c_i r\r) \ : \ v(x,t) > \mu_{i}^+- 2^{-m}\omega_{i}  \}\right)
\\ & \qquad \leq  \nu\left(\{ (x,t) \in \cup_k  Q_k \ : \ v(x,t) > \mu_{i}^+ - 2^{-m} \omega_{i}  \} \right)
\\ & \qquad \leq  \sum_k \nu\left(\{ (x,t) \in  Q_k \ : \ v(x,t) > \mu_{i}^+ - 2^{-m} \omega_{i}  \} \right)
\\ & \qquad \leq D_0^{-4} \alpha_2 \sum_k  \nu(Q_k)
\\ & \qquad \leq D_0^{-1} \alpha_2 \sum_k  \nu(B_{j_k} \times (-\beta r^p,0) )
\\ & \qquad \leq D_0^{-1} \alpha_2 \nu\left(B(0,2c_i r) \times (-\beta r^p,0) \right),
\\ & \qquad \leq  \alpha_2 \nu\left(Q\l(\beta r^p,c_i r\r)\right),
\end{align*}
where we have used the doubling property of $\mu$ repeatedly. The estimate concludes the proof.
\end{proof}
\end{corollary}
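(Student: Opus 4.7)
The aim is to upgrade Lemma~\ref{choosing_lambda}, which gives the good bound on subcylinders $Q_{\bar x, 0}(\beta r^p, d_i r)$ whose spatial centers lie in $B(0, c_i r)$, to a single bound on the full cylinder $Q(\beta r^p, c_i r)$. Since $d_i \leq c_i$ (both are powers of $\omega_i / \mu_i^-$ with the same negative exponent, and $s \leq \lambda$), the spatial scale of the subcylinders is smaller than the outer scale, so this is naturally a Vitali covering argument; the only price paid is a bounded number of doubling constants.

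First I would fix a countable dense family $\{x_j\} \subset B(0, c_i r)$ and consider the small balls $B_j := B(x_j, d_i r / 5)$. By Vitali's covering theorem, there is a pairwise disjoint subfamily $\{B_{j_k}\}$ whose fivefold enlargements $5 B_{j_k} = B(x_{j_k}, d_i r)$ still cover $B(0, c_i r)$. Setting $Q_k := 5 B_{j_k} \times (-\beta r^p, 0)$, each $Q_k$ is exactly of the form $Q_{x_{j_k}, 0}(\beta r^p, d_i r)$ with $x_{j_k} \in B(0, c_i r)$, so Lemma~\ref{choosing_lambda} applies uniformly to all $Q_k$.

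Next I would apply Lemma~\ref{choosing_lambda} with the threshold $\alpha_1 := D_0^{-N} \alpha_2$ for a fixed small integer $N$ (to be absorbed by the bookkeeping below). This yields a single integer $m$, independent of $k$, such that for every $k$,
\[
\nu\bigl(\{v > \mu_i^+ - 2^{-m}\omega_i\} \cap Q_k\bigr) \le \alpha_1\, \nu(Q_k).
\]
Since $Q(\beta r^p, c_i r) \subset \bigcup_k Q_k$, summing over $k$ gives an upper bound of $\alpha_1 \sum_k \nu(Q_k)$. Using the product structure $d\nu = d\mu\, dt$ together with the doubling property of $\mu$, one replaces each $\nu(5 B_{j_k} \times I)$ by a controlled multiple of $\nu(B_{j_k} \times I)$, after which the disjointness of the $B_{j_k}$ collapses the sum into $\nu(B(0, 2 c_i r) \times (-\beta r^p, 0))$; one final application of doubling bounds this by a constant multiple of $\nu(Q(\beta r^p, c_i r))$. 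Choosing $N$ large enough (a modest power like $N = 4$ suffices) to cancel all accumulated factors of $D_0$ gives the claimed estimate.

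The argument is essentially combinatorial, the sole quantitative input being the uniform $m$ from Lemma~\ref{choosing_lambda} and the doubling property. I do not foresee a real obstacle, as the analytic work has already been done in the preceding lemma; the main care point is simply to verify that the number of doubling factors lost in passing from $5 B_{j_k}$ to $B_{j_k}$ and back out to $B(0, 2 c_i r)$ stays bounded by an absolute constant depending only on the data, so that $\alpha_1$ can be chosen as a fixed negative power of $D_0$ times $\alpha_2$ without forcing $m$ to depend on anything beyond the data and $\alpha_2$.
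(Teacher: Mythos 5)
Your proposal is correct and follows essentially the same Vitali covering argument as the paper, with the same choice $\alpha_1 = D_0^{-4}\alpha_2$ (your $N=4$) and the same doubling bookkeeping to pass from $5B_{j_k}$ to $B_{j_k}$, sum over disjoint balls, and absorb the accumulated factors of $D_0$. No substantive differences.
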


\begin{proposition} \label{prop:c2alt2_main_result}
Assume that~\eqref{l:c2alt2} holds for all $\bar{x}\in B(0,c_i r)$. There exists a positive number $\sigma \in (0,1)$, depending only on the data, such that
$$
\upsilon(x,t)\leq \mu_{i}^{+}- (1-\sigma) \omega_i, \quad \mbox{for a.e.} \ (x,t)\in Q\left(\beta \left(r/2\right)^{p},c_i r/2\right).
$$
\begin{proof}
As in the conclusion of the First Alternative, we shall apply Corollary~\ref{cor:iteration} (see also Remark~\ref{main_lemma_remark}). Choose the parameters
\[
\gamma_1^+ = c_i = \left(\frac{\omega_i}{2^\lambda \mu_i^-} \right)^{(p-2)/p}
, \qquad   \gamma_2^+ = \frac{\alpha_0}{2}, \qquad \eps_+ =2^{-m},
\]
where $m>1$ is the number obtained in the previous corollary. Define
\[
k_n^+ = \mu_i^+ - \frac{\eps_+ \omega_i}{2} \left(1 + \frac1{2^{n}}  \right),
\]
and put $\lambda=m$. Due to \eqref{eq:bar est}, 
the quantity
\[
\frac{(\gamma_1^+)^p}{\gamma_2^+}  \left(\frac{k_n^+}{\eps_+ \omega_i}\right)^{p-2}
= \frac{2}{\alpha_0} \left( \frac{\omega_i}{2^\lambda \mu_i^-} \frac{k_n^+}{2^{-m} \omega_i}  \right)^{p-2}
= \frac{2}{\alpha_0}  \left( \frac{k_n^+}{\mu_i^-} \right)^{p-2}
\]
is bounded above and below, for a constant depending only on the data. Therefore,
Corollary~\ref{cor:iteration} is applicable and making use of
Corollary~\ref{cor:second alt} the result follows by choosing $\sigma = 1-2^{-m-1}$.
\end{proof}
\end{proposition}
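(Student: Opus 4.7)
The plan is to invoke the De Giorgi-type iteration of Corollary~\ref{cor:iteration} (in the form valid for weak solutions of the transformed equation~\eqref{substituted_equation}, cf. Remark~\ref{main_lemma_remark}) with parameters tuned to the geometry of $Q(\beta r^p, c_i r)$. Since everything in the Second Alternative is geared towards controlling how close $v$ comes to $\mu_i^+$, the natural choice is $\eps_+ = 2^{-m}$, where $m$ is the integer produced by Corollary~\ref{cor:second alt}, together with $\gamma_1^+ = c_i$, $\gamma_2^+ = \beta = \alpha_0/2$, and $k_n^+ = \mu_i^+ - \tfrac{\eps_+ \omega_i}{2}(1+2^{-n})$. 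Fixing $\lambda = m$ links the definition of $c_i$ to the intrinsic scale generated by the alternative, which is the key step that closes the argument.

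With these choices I would then verify the two-sided bound \eqref{eq:iteration cor ass}, namely that
\[
\frac{(\gamma_1^+)^p}{\gamma_2^+}\left(\frac{k_n^+}{\eps_+ \omega_i}\right)^{p-2}
\]
stays comparable to an absolute constant depending only on the data. A direct computation reduces this expression to $(2/\alpha_0)(k_n^+/\mu_i^-)^{p-2}$ because the factor $c_i^p$ cancels the $\eps_+$ factor exactly when $\lambda = m$; boundedness then follows from the elliptic Harnack-type inequality \eqref{eq:bar est}, which was built into Case~II and controls $\mu_i^+$ by $\mu_i^-$ up to a constant depending only on the data.

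Next, I would check the smallness hypothesis on the initial distribution set. By definition of $k_0^+$ the set $A_0^+ = \{v > k_0^+\}$ is contained in $\{v > \mu_i^+ - 2^{-m}\omega_i\}$ inside $Q_0^+ \equiv Q(\beta r^p, c_i r)$, and Corollary~\ref{cor:second alt} applied with $\alpha_2$ chosen equal to the iteration threshold $\alpha_0^+$ from Corollary~\ref{cor:iteration} provides exactly the bound $\nu(A_0^+)/\nu(Q_0^+) \leq \alpha_0^+$. Note that the required $m$ in Corollary~\ref{cor:second alt} depends on $\alpha_2$, hence ultimately on the data alone, which is consistent with $\lambda = m$ being a data-dependent constant.

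Corollary~\ref{cor:iteration} then yields $v(x,t) \leq \mu_i^+ - \eps_+ \omega_i/4$ on $B(0, c_i r/2) \times (t^* - \gamma_2^+ (r/2)^p, t^*)$, and since $t^*$ can be chosen arbitrarily close to $0$ (after suitably shrinking the radius, using that $\gamma_2^+ = \beta$ matches the height of the cylinder in the statement), the estimate propagates to all of $Q(\beta (r/2)^p, c_i r/2)$; translating back gives the claim with $\sigma = 1 - 2^{-m-1}$. The only substantive obstacle is ensuring that the constants are genuinely data-dependent and, in particular, that fixing $\lambda = m$ does not create a circular dependence between $\lambda$ and $\alpha_0$; this is averted precisely because $\alpha_0$ was fixed universally at the end of the First Alternative analysis, while $m$ is chosen afterwards depending only on $\alpha_0$ and the data.
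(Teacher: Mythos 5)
Your proof is correct and reproduces the paper's argument essentially verbatim: the parameter choices $\gamma_1^+ = c_i$, $\gamma_2^+ = \alpha_0/2$, $\eps_+ = 2^{-m}$, $\lambda = m$, the cancellation that makes \eqref{eq:iteration cor ass} a data-dependent bound via \eqref{eq:bar est}, the application of Corollary~\ref{cor:second alt} to seed Corollary~\ref{cor:iteration}, and the explicit observation that $\alpha_0$ is fixed before $m$ so no circularity arises. The only slip is cosmetic: with $k_n^+ = \mu_i^+ - \tfrac{\eps_+\omega_i}{2}\left(1+2^{-n}\right)$, which amounts to running Lemma~\ref{main_lemma} with $2\eps_+$ in its normalization, the iteration yields $v \leq \mu_i^+ - 2^{-m-1}\omega_i$ rather than $\mu_i^+ - \eps_+\omega_i/4 = \mu_i^+ - 2^{-m-2}\omega_i$ as you wrote in the intermediate step, and this corrected bound is what matches your final $\sigma = 1 - 2^{-m-1}$.
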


We now fix the size of the cylinder and finish the analysis of the Second Alternative, redefining
\begin{equation}\label{eq:lambda chosen}
\lambda = \max\{\log_2(2H+1),m\}.
\end{equation}  
The choice trivially satisfies~\eqref{lambda large}.

Combining the above Proposition with~\eqref{eq:Case II inclusion} and returning to the bar in the notation, we obtain the next result.

\begin{corollary} \label{cor:Case II conclusion 2}
Suppose that the Second Alternative holds. Then
there are positive numbers $\delta_{III},\sigma_{III} \in (0,1)$, both depending only on the data, such that
$$
\essosc_{Q\left((\delta_{III} R_{i+1})^{p},c_{i+1} \delta_{III} R_{i+1}  \right)}\upsilon\leq \sigma_{III} \bar{\omega}_{i}.
$$
\end{corollary}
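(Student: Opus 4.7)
The plan is to derive the oscillation reduction for $v$ by combining the sharp upper bound obtained in Proposition~\ref{prop:c2alt2_main_result} with the trivial lower bound $v\ge \mu_i^-$ coming from the induction hypothesis, and then absorbing the resulting estimate into a smaller cylinder of the intrinsic form $Q((\delta_{III}R_{i+1})^p, c_{i+1}\delta_{III}R_{i+1})$ via an inclusion argument.

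First, by Proposition~\ref{prop:c2alt2_main_result}, there exists $\sigma\in(0,1)$, depending only on the data, such that
\[
v(x,t)\le \mu_i^+-(1-\sigma)\omega_i \quad \text{for a.e.\ } (x,t)\in Q\!\left(\beta(r/2)^p,\, c_i r/2\right).
\]
Since $Q(\beta(r/2)^p, c_i r/2)\subset Q^i$ and the inductive assumption of Case~II gives $v\ge \mu_i^-$ on $Q^i$, subtracting yields
\[
\essosc_{Q(\beta(r/2)^p,\, c_i r/2)} v \;\le\; \bigl(\mu_i^+-(1-\sigma)\omega_i\bigr)-\mu_i^- \;=\; \sigma\,\omega_i \;=\; \sigma\,\bar{\omega}_i,
\]
after restoring the bar notation, since within Case II we identify $\omega_i$ with $\bar{\omega}_i$ and $\mu_i^\pm$ with $\bar{\mu}_i^\pm$.

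Next, I would choose $\delta_{III}\in(0,1)$ small enough, depending only on the data, so that
\[
Q\!\left((\delta_{III}R_{i+1})^p,\; c_{i+1}\,\delta_{III}R_{i+1}\right)\;\subset\;Q\!\left(\beta(r/2)^p,\, c_i r/2\right).
\]
Using $R_{i+1}=\delta R_i$ together with $r=R_i/10$ (for $i>i_0$; for $i=i_0$ the same argument works with the alternative definition of $r$, absorbing the extra factor $(C_2 2^\lambda)^{(p-2)/p}$ into $\delta_{III}$), the time-direction condition $(\delta_{III}R_{i+1})^p\le \beta(r/2)^p$ and the space-direction condition $c_{i+1}\delta_{III}R_{i+1}\le c_i r/2$ both reduce to bounds on $\delta_{III}$ independent of $i$. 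For the space condition we invoke the uniform ratio estimate $c_{i+1}/c_i\le C_3^{(2-p)/p}$ established right before \eqref{eq:Case II inclusion}, so that the dependence on $i$ drops out. Setting $\sigma_{III}:=\sigma$ and combining with the previous display completes the proof.

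The main technical point is the second step: reconciling the two different intrinsic scalings $c_i$ and $c_{i+1}$ when nesting the cylinders. This is precisely the role played by the inclusion \eqref{eq:Case II inclusion}, which guarantees that the ratio $c_{i+1}/c_i$ is bounded by a data-dependent constant, uniformly in $i$; without this uniform control the choice of $\delta_{III}$ could deteriorate as $i\to\infty$, breaking the iteration. The remainder is essentially bookkeeping, since the substantive work (the quantitative upper bound on $v$) has already been carried out in Proposition~\ref{prop:c2alt2_main_result}.
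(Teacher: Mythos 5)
Your proposal is correct and follows essentially the same route as the paper, which compresses the argument into a single sentence: combine Proposition~\ref{prop:c2alt2_main_result} with the inclusion~\eqref{eq:Case II inclusion} and restore the bar notation. You have simply unfolded that sentence: the upper bound from the Proposition plus the trivial lower bound $v\ge\mu_i^-$ from the induction hypothesis give the oscillation estimate on $Q(\beta(r/2)^p,c_ir/2)$, and the uniform control $c_{i+1}/c_i\le C_3^{(2-p)/p}$ (the content of~\eqref{eq:Case II inclusion}) is precisely what makes the nesting into a $c_{i+1}$-scaled cylinder possible with a data-dependent $\delta_{III}$, independent of $i$. This matches the paper's reasoning.
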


We finally prove the H\"older continuity of $u$. Theorem~\ref{main_theorem} is an immediate consequence of the following theorem.
\begin{theorem}
Suppose that $u$ is a nonnegative weak solution of  equation~\eqref{equation} in $Q_{x,t}(R^p,R)$. Then there are
positive constants $C$ and $\alpha$, both depending only on the data, such that
\[
\essosc_{Q_{x,t}(\varrho^p,\varrho)}{u} \leq C \left(\frac{\varrho}{R} \right)^{\alpha} \esssup_{Q_{x,t}(R^p,R)}{u}
\]
for all $0<\varrho<R$. The constants are stable as $p \uparrow 2$.
\begin{proof}
After translation, we may assume that $(x,t) \equiv (0,0)$.
We shall combine Corollaries~\ref{cor:Case I conclusion},~\ref{cor:Case II conclusion 1}
and~\ref{cor:Case II conclusion 2}. Indeed, take $\delta := \min\{\delta_I,\delta_{II},\delta_{III}\}$ and
$\sigma := \max\{\sigma_I,\sigma_{II},\sigma_{III}\}$. Then, we have
\[
\essosc_{Q((\delta^i R)^p ,\delta^i R)}{u} \leq \omega_i := \sigma^i \omega_0, \qquad i=0,1,\ldots,i_0
\]
and
\[
\essosc_{Q((\delta^i R)^p ,c_{i-1} \delta^i R)}{\upsilon} \leq \sigma^{i-i_0} \bar{\omega}_{i_0}, \qquad i > i_0 .
\]
Observe that, since $p\leq 2$, we have
$$
 \bar{\omega}_{i_0} \leq  (\mu_{i_0}^-)^{p-2} \omega_{i_0} .
$$
In view of the mean value theorem and~\eqref{Harnack_assumption_u}, this implies
\begin{equation}\nonumber
\begin{split}
\essosc_{Q((\delta^{i+1} R)^p ,c_i \delta^{i+1} R)}{u} \leq &  (\bar{\mu}_{i+1}^+)^{1/(p-1)} - (\bar{\mu}_{i+1}^-)^{1/(p-1)}
\\ \leq &
\frac1{p-1} (\bar{\mu}_{i+1}^+)^{(2-p)/(p-1)}\left(\bar{\mu}_{i+1}^+ - \bar{\mu}_{i+1}^- \right)
\\ \leq &
\frac1{p-1} (\bar{\mu}_{i_0}^+)^{(2-p)/(p-1)}\sigma^{i-i_0+1} \bar{\omega}_{i_0}
\\ \leq &
\frac{1}{p-1} \left(\frac{\mu_{i_0}^+}{\mu_{i_0}^-}\right)^{2-p} \sigma^{i-i_0+1} \omega_{i_0} 
\\ \leq & C \sigma^{i+1} \omega_0 \equiv C \omega_{i+1}.
\end{split}
\end{equation}
for all $i \geq i_0$, with $C$ depending only on the data. But since $c_i \geq 1$ (recall the choice of $\lambda$ in~\eqref{eq:lambda chosen} and \eqref{c(i) large}) we finally obtain
\[
\essosc_{Q((\delta^i R)^p , \delta^i R)}{u} \leq C \sigma^i \omega_0, \qquad i=0,1,2,\ldots .
\]
From this, the result follows in a standard way (cf. \cite{DiBe93, Urba08}).
\end{proof}
\end{theorem}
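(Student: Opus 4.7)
After translating, assume $(x,t)=(0,0)$. The plan is to combine the three oscillation-reduction corollaries (\ref{cor:Case I conclusion}, \ref{cor:Case II conclusion 1}, \ref{cor:Case II conclusion 2}) into a single geometric decay statement on a sequence of shrinking cylinders, and then extract H\"older continuity in the standard way. First I would fix $\delta := \min\{\delta_I,\delta_{II},\delta_{III}\}\in(0,1)$ and $\sigma := \max\{\sigma_I,\sigma_{II},\sigma_{III}\}\in(0,1)$, all depending only on the data, and build the sequence $R_i=\delta^i R$ together with the cylinders $Q^i$ as prescribed in Section~3 (using $\delta$ while in Case~I, and the $c_i$-rescaled version once Case~II is entered).

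Next I would run the induction on $i$. As long as the Case~I test~\eqref{eq:Case I test} holds, Corollary~\ref{cor:Case I conclusion} gives
\[
\essosc_{Q((\delta^i R)^p,\delta^i R)}{u} \le \omega_i := \sigma^i\omega_0, \qquad i=0,1,\ldots,i_0,
\]
where $i_0$ is the first index at which~\eqref{cor:Case II test} occurs (set $i_0=\infty$ if it never occurs; then we are already done). From $i=i_0$ onward I would switch to the transformed equation~\eqref{substituted_equation} satisfied by $\upsilon=u^{p-1}$, and iteratively apply Corollaries~\ref{cor:Case II conclusion 1} and~\ref{cor:Case II conclusion 2} (depending on which of the two alternatives is realized at stage $i$) to obtain
\[
\essosc_{Q((\delta^i R)^p,\,c_{i-1}\delta^i R)}{\upsilon} \le \sigma^{i-i_0}\bar\omega_{i_0}, \qquad i>i_0.
\]
Crucially, $\lambda$ has been fixed in~\eqref{eq:lambda chosen} once and for all, so the constant $c_{i-1}\ge 1$ by~\eqref{c(i) large}, which means the cylinder $Q((\delta^i R)^p,\delta^i R)$ is contained in $Q((\delta^i R)^p,c_{i-1}\delta^i R)$.

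The translation step from $\upsilon$ back to $u$ is the most delicate point. Since $p\le 2$, the inverse map $y\mapsto y^{1/(p-1)}$ is only locally Lipschitz with constant depending on a positive lower bound for $y$; this is where assumption~\eqref{Harnack_assumption_u} is decisive. By the mean value theorem applied to $f(y)=y^{1/(p-1)}$ on the interval $[\bar\mu_{i}^-,\bar\mu_{i}^+]$, combined with~\eqref{Harnack_assumption_u} and~\eqref{eq:bar est}, I would estimate
\[
\essosc_{Q((\delta^i R)^p,\delta^i R)}{u}
\le \tfrac1{p-1}(\bar\mu_{i_0}^+)^{(2-p)/(p-1)}\,\sigma^{i-i_0}\bar\omega_{i_0}
\le C\sigma^i\omega_0, \qquad i>i_0,
\]
with $C$ depending only on the data (using $\bar\omega_{i_0}\le(\mu_{i_0}^-)^{p-2}\omega_{i_0}$ and $\mu_{i_0}^+\le(2H+2)\mu_{i_0}^-$). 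Thus both regimes produce a uniform bound of the form
\[
\essosc_{Q((\delta^i R)^p,\delta^i R)}{u} \le C\sigma^i\omega_0, \qquad i=0,1,2,\ldots.
\]

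Finally, given any $0<\varrho<R$, I would pick the unique $i$ with $\delta^{i+1}R\le \varrho<\delta^i R$, write $\alpha:=\log\sigma/\log\delta>0$ and bound
\[
\essosc_{Q(\varrho^p,\varrho)}{u}\le \essosc_{Q((\delta^i R)^p,\delta^i R)}{u}\le C\sigma^i\omega_0 \le C'\Bigl(\frac{\varrho}{R}\Bigr)^\alpha \omega_0,
\]
and observe that $\omega_0\le \esssup_{Q(R^p,R)}u$ by the normalization~\eqref{bounds_inf}. Stability as $p\uparrow2$ is inherited from the already-verified stability of $\delta,\sigma$ and of the constant in the mean-value step (the factor $\tfrac{1}{p-1}$ is bounded near $p=2$). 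The main obstacle is the last calculation: reconciling the two chains of cylinders (with and without the factor $c_{i-1}$) and absorbing the $(p-1)$-th-root distortion into a constant that depends only on the data; this is handled by~\eqref{eq:lambda chosen}, the inclusion $c_i\ge 1$, and the uniform bound~\eqref{Harnack_assumption_u}.
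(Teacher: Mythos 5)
Your proposal is correct and follows essentially the same route as the paper's own proof: the same choice of $\delta$ and $\sigma$ by combining the three corollaries, the same two-regime geometric decay (Case~I for $i\le i_0$, the transformed $\upsilon$-cylinders for $i>i_0$), the same use of $\bar\omega_{i_0}\le(\mu_{i_0}^-)^{p-2}\omega_{i_0}$ and the mean value theorem together with~\eqref{Harnack_assumption_u} to translate the $\upsilon$-oscillation back to $u$, and the same observation that $c_i\ge 1$ permits shrinking back to the standard cylinders. The only difference is that you explicitly write out the final interpolation step $\delta^{i+1}R\le\varrho<\delta^iR$ with $\alpha=\log\sigma/\log\delta$, which the paper leaves as "standard."
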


\bibliography{citations}
\bibliographystyle{plain}

\bigskip
\noindent Addresses:

\noindent T.K.: Aalto University, Institute of Mathematics, P.O. Box 11100, FI-00076 Aalto, Finland. \\
\noindent
E-mail: {\tt tuomo.kuusi@tkk.fi}\\

\noindent R.L.: CMUC, Department of Mathematics,
University of Coimbra, 3001-454 Coimbra, Portugal. \\
\noindent
E-mail: {\tt rojbin@mat.uc.pt}\\

\noindent J.S.: Aalto University, Institute of Mathematics, P.O. Box 11100, FI-00076 Aalto, Finland. \\
\noindent
E-mail: {\tt juhana.siljander@tkk.fi}\\

\noindent J.M.U.: CMUC, Department of Mathematics,
University of Coimbra, 3001-454 Coimbra, Portugal. \\
\noindent
E-mail: {\tt jmurb@mat.uc.pt}\\

\end{document}